\documentclass[a4]{amsart}

\usepackage{amssymb}
\usepackage[all]{xy}
\usepackage{amsmath, amsfonts, amsthm , amssymb}

\input xypic
\usepackage{tikz}

\oddsidemargin 0.200 true in
\evensidemargin 0.200 true in
\marginparwidth 1 true in
\topmargin -0.2 true in
\textheight 9 true in
\textwidth 6.0 true in


\newtheorem{theorem}{Theorem}[section]
\newtheorem{proposition}[theorem]{Proposition}
\newtheorem{lemma}[theorem]{Lemma}
\newtheorem{corollary}[theorem]{Corollary}

\theoremstyle{definition}

\theoremstyle{remark}
\newtheorem{remark}[theorem]{Remark}

\newcommand{\uxa}{\ensuremath{(\underline{X},\underline{A})}}
\newcommand{\ux}{\ensuremath{(\underline{X},\underline{\ast})}}
\newcommand{\cyy}{\ensuremath{(\underline{CY},\underline{Y})}}
\newcommand{\cyyk}{\ensuremath{(\underline{CY},\underline{Y})^{K}}}

\newcommand{\clxx}{\ensuremath{(\underline{C\Omega X},\underline{\Omega X})}}

\newcommand{\starr}{\mathop{\rm star}\nolimits}
\newcommand{\link}{\mathop{\rm link}\nolimits}

\newcommand{\cpinf}{\ensuremath{\mathbb{C}P^{\infty}}}

\newcommand{\zk}{\ensuremath{\mathcal{Z}_{K}}}


\newcommand{\cohlgy}[1]{\ensuremath{H^{*}(#1)}}

\newcounter{bean}
\newenvironment{letterlist}{\begin{list}{\rm ({\alph{bean}})}
      {\usecounter{bean}\setlength{\rightmargin}{\leftmargin}}}
      {\end{list}}

\newcommand{\namedright}[3]{\ensuremath{#1\stackrel{#2}
 {\longrightarrow}#3}}
\newcommand{\nameddright}[5]{\ensuremath{#1\stackrel{#2}
 {\longrightarrow}#3\stackrel{#4}{\longrightarrow}#5}}
\newcommand{\namedddright}[7]{\ensuremath{#1\stackrel{#2}
 {\longrightarrow}#3\stackrel{#4}{\longrightarrow}#5
  \stackrel{#6}{\longrightarrow}#7}}

\newcommand{\larrow}{\relbar\!\!\relbar\!\!\rightarrow}
\newcommand{\llarrow}{\relbar\!\!\relbar\!\!\larrow}

\newcommand{\llnamedright}[3]{\ensuremath{#1\stackrel{#2}
 {\llarrow}#3}}
\newcommand{\llnameddright}[5]{\ensuremath{#1\stackrel{#2}
 {\llarrow}#3\stackrel{#4}{\llarrow}#5}}

\newcommand{\qqed}{\hfill\Box}


\def\MF{\mathrm{MF}}


\begin{document}
\title[Polyhedral products associated with flag complexes]
    {The homotopy theory of polyhedral products associated with flag complexes}
\author[Taras Panov]{Taras Panov}
\address{Department of Mathematics and Mechanics, Moscow
       State University, Leninskie Gory, 119991 Moscow, Russia,
       \newline\indent Institute for Theoretical and Experimental Physics,
       Moscow, Russia \quad \emph{and}
       \newline\indent Institute for Information Transmission Problems,
       Russian Academy of Sciences}
\email{tpanov@mech.math.msu.su}
\author{Stephen Theriault}
\address{Mathematical Sciences, University of Southampton,
       Southampton SO17 1BJ, United Kingdom}
\email{S.D.Theriault@soton.ac.uk}

\subjclass[2010]{Primary 55P35, Secondary 05E45.}
\date{}
\keywords{polyhedral product, moment-angle complex, flag complex}

\begin{abstract}
If $K$ is a simplicial complex on $m$ vertices the flagification
of $K$ is the minimal flag complex $K^{f}$ on the same vertex set
that contains $K$. Letting $L$ be the set of vertices, there is a
sequence of simplicial inclusions
\(\nameddright{L}{}{K}{}{K^{f}}\). This induces a sequence of maps
of polyhedral products
\(\nameddright{\uxa^{L}}{g}{\uxa^{K}}{f}{\uxa^{K^{f}}}\). We show
that $\Omega f$ and $\Omega f\circ\Omega g$ have right homotopy
inverses and draw consequences. For a flag complex $K$ the
polyhedral product of the form $\cyy^{K}$ is a co-$H$-space if and
only if the $1$-skeleton of $K$ is a chordal graph, and we deduce
that the maps $f$ and $f\circ g$ have right homotopy inverses in
this case.
\end{abstract}

\maketitle

\section{Introduction}
\label{sec:intro}

The purpose of this paper is to investigate the homotopy theory of
polyhedral products associated with flag complexes.
Polyhedral products have received considerable attention recently as they
unify diverse constructions from several seemingly separate areas of
mathematics: toric topology (moment-angle complexes),
combinatorics (complements of complex coordinate subspace arrangements),
commutative algebra (the Golod property of monomial rings), complex
geometry (intersections of quadrics), and geometric group theory
(Bestvina-Brady groups).

To be precise, let $K$ be a simplicial complex on the vertex set
$[m]=\{1,2,\ldots,m\}$. For $1\leq i\leq m$, let $(X_{i},A_{i})$ be a
pair of pointed $CW$-complexes, where $A_{i}$ is a pointed $CW$-subcomplex
of $X_{i}$. Let $\uxa=\{(X_{i},A_{i})\}_{i=1}^{m}$ be the sequence of pairs.
For each simplex $\sigma\in K$, let $\uxa^{\sigma}$ be the
subspace of $\prod_{i=1}^{m} X_{i}$ defined by
\[\uxa^{\sigma}=\prod_{i=1}^{m} Y_{i}\qquad
       \mbox{where}\qquad Y_{i}=\left\{\begin{array}{ll}
                                             X_{i} & \mbox{if $i\in\sigma$} \\
                                             A_{i} & \mbox{if $i\notin\sigma$}.
                                       \end{array}\right.\]
The \emph{polyhedral product} determined by \uxa\ and $K$ is
\[\uxa^{K}=\bigcup_{\sigma\in K}\uxa^{\sigma}\subseteq\prod_{i=1}^{m} X_{i}.\]
For example, suppose each $A_{i}$ is a point. If $K$ is a disjoint
union of $m$ points then $(\underline{X},\underline{\ast})^{K}$ is
the wedge $X_{1}\vee\cdots\vee X_{m}$, and if $K$ is the standard
$(m-1)$-simplex then $(\underline{X},\underline{\ast})^{K}$ is the
product $X_{1}\times\cdots\times X_{m}$.

The combinatorics of $K$ informs greatly on the homotopy theory of
$\uxa^{K}$. One notable family of simplicial complexes is the collection
of flag complexes. A simplicial complex $K$ is flag if any set of
vertices of $K$ which are pairwise connected by edges spans a
simplex. Flag complexes are important in graph theory, where they are referred
to as clique complexes, in the study of metric spaces, where they are referred
to as Rips complexes, and in geometric group theory, where they are referred
to as Gromov's no-$\triangle$ complexes.

The \emph{flagification} of $K$, denoted~$K^f$, is the
minimal flag complex on the same set $[m]$ that contains~$K$. We
therefore have a simplicial inclusion $K\to K^f$. For example, the
$(m-1)$-simplex $\varDelta^{m-1}$, consisting of all subsets
of~$[m]$, is flag, while its boundary $\partial\varDelta^{m-1}$,
consisting of all proper subsets of~$[m]$, is flag only for $m=2$.
The flagification of $\partial\varDelta^{m-1}$ with $m>2$
is~$\varDelta^{m-1}$. An \emph{$m$-cycle} (the boundary of an
$m$-gon) is flag whenever $m>3$.

The main result of the paper is the following.

\begin{theorem}
   \label{main}
   Let $K$ be a simplicial complex on the vertex set $[m]$, let
   $K^{f}$ be the flagification of~$K$, and let $L$ be the simplicial
   complex given by $m$ disjoint points. Let $\uxa=\{(X_{i},A_{i})\}_{i=1}^{m}$ 
   be a sequence of pairs of pointed $CW$-complexes, where $A_{i}$ is a 
   pointed $CW$-subcomplex of $X_{i}$. Let
   \(\nameddright{\uxa^{L}}{g}{\uxa^{K}}{f}{\uxa^{K^{f}}}\)
   be the maps of polyhedral products induced by the maps of simplicial complexes
   \(\nameddright{L}{}{K}{}{K^{f}}\).
   Then the following hold:
   \begin{letterlist}
      \item the map $\Omega f$ has a right homotopy inverse;
      \item the composite $\Omega f\circ\Omega g$ has a right homotopy inverse.
   \end{letterlist}
\end{theorem}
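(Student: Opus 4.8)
The plan is to build an explicit stable-to-unstable argument after looping, exploiting the fact that after one loop the polyhedral product splits. The key known fact is the homotopy decomposition of $\Omega\uxa^{K}$ as an infinite product indexed by the full subcomplexes $K_{I}$ of $K$ (for $\uxa=\ux$, this is the loop space version of the Porter/Grbi\'c--Theriault-type splitting; for general pairs one uses the fibration $\uxa^{K}\to\prod X_{i}$ with fibre built from $\ux^{K}$-type pieces, and loops are taken throughout). Concretely, I would first recall that for each $I\subseteq[m]$ the inclusion $K_{I}\hookrightarrow K$ of the full subcomplex on $I$ gives a retraction after looping, so $\Omega\uxa^{K}$ receives compatibly the loop spaces of the sub-polyhedral-products; the natural transformation induced by a simplicial inclusion respects these wedge/product summands indexed by $I$.

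First I would treat part (a). Since $K$ and $K^{f}$ have the same vertex set and the same $1$-skeleton, and since the flag complex $K^{f}$ is determined by that $1$-skeleton, the map $f\colon\uxa^{K}\to\uxa^{K^{f}}$ is an isomorphism on the $2$-skeleta of the relevant pieces, and more importantly for every missing face of $K^{f}$ the corresponding full subcomplex $K_{I}$ already ``sees'' all the edges. I would then argue that on the level of the looped product decomposition, $\Omega f$ is, summand by summand indexed by $I\subseteq[m]$, either a homotopy equivalence (when $K_{I}=K^{f}_{I}$) or a split surjection built from a wedge collapse (when $K_{I}\subsetneq K^{f}_{I}$, i.e. $I$ spans a simplex in $K^{f}$ but not in $K$). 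A clean way to produce the right inverse is to use that $\Omega$ applied to a simplicial inclusion $i\colon K\hookrightarrow K^{f}$ admits a section whenever there is a combinatorial retraction on each full subcomplex level; the crucial input is that passing from $K$ to $K^{f}$ only \emph{adds} simplices, so each $\uxa^{K_{I}}\to\uxa^{K^{f}_{I}}$ is a closed inclusion which, after looping, is null-homotopic on the relevant James-type summands, yielding a stable splitting that loops can promote to a genuine right homotopy inverse because $\Omega\uxa^{K^{f}}$ is a product of loop spaces (hence each summand is an $H$-space and retractions can be assembled multiplicatively).

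For part (b) I would compose: $\Omega g$ has a right homotopy inverse by the same mechanism with $K$ replaced by $L$ (the discrete complex $L$ has every full subcomplex discrete, $\Omega\uxa^{L}$ is the loop space on a generalized wedge, and $L\hookrightarrow K$ adds only higher simplices), and then $\Omega f\circ\Omega g$ has a right homotopy inverse as a composite of maps each admitting one. The subtlety is that the two sections must be compatible with the product decomposition so that their composite is again a section; this is automatic because all the retractions are constructed on the same indexing poset of subsets $I\subseteq[m]$ and are natural in the simplicial complex. I expect the main obstacle to be the passage from the \emph{stable} splitting of polyhedral products (which holds at the level of suspensions quite generally) to an \emph{unstable} statement after looping: one must be careful that the James splitting summands assemble into an honest product of loop spaces and that the collapse maps are loop maps, or at least become so after one further loop, so that a multiplicative section exists. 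Handling the non-trivial $A_{i}$ case uniformly — rather than only $A_{i}=\ast$ — will require running the argument through the fibration $\uxa^{K}\to\prod_{i}X_{i}$ and checking the fibre inclusions are compatible with all of the above, which is where the bulk of the technical care goes.
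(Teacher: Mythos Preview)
Your proposal has genuine gaps, and the overall strategy does not match what the paper does.

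\medskip

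\textbf{The claimed loop-space decomposition does not exist.} You invoke as a ``known fact'' a homotopy decomposition of $\Omega\uxa^{K}$ as a product indexed by full subcomplexes $K_{I}$. No such unstable decomposition is available for general~$K$. What is known is the \emph{stable} BBCG splitting of $\Sigma\uxa^{K}$; promoting it to a product decomposition of the loop space is precisely the hard part, and for flag $K^{f}$ it is essentially equivalent to the theorem you are trying to prove. Your argument is therefore circular at its starting point. The observation that each $\uxa^{K_{I}}$ retracts off $\uxa^{K}$ (full-subcomplex retraction) is correct, but a family of retractions onto pieces does not assemble into a product decomposition.

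\medskip

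\textbf{Your part~(b) argument is incorrect.} You assert that $\Omega g$ itself has a right homotopy inverse ``by the same mechanism,'' and then compose sections. The theorem does \emph{not} claim $\Omega g$ has a right inverse, only that $\Omega f\circ\Omega g$ does, and in fact $\Omega g$ need not have one. Take $K=\partial\varDelta^{2}$ and $(X_{i},A_{i})=(D^{2},S^{1})$. Then $\uxa^{L}\simeq (S^{3})^{\vee 3}\vee(S^{4})^{\vee 2}$ is $4$-dimensional while $\uxa^{K}=\mathcal{Z}_{\partial\varDelta^{2}}\simeq S^{5}$, so $g$ is null-homotopic and $\Omega g$ has no right inverse. (Here $K^{f}=\varDelta^{2}$ and $\uxa^{K^{f}}$ is contractible, so part~(b) holds trivially.) More broadly, your mechanism would apply to \emph{any} simplicial inclusion on a fixed vertex set, which would make the flag hypothesis superfluous; the example shows this cannot work.

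\medskip

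\textbf{Where the flag hypothesis actually enters.} Nowhere in your outline does the flag property of $K^{f}$ do real work. In the paper it is essential: the proof is by induction on~$m$ via deletion of a vertex~$v$, using a natural decomposition
\[
  \Omega\uxa^{M}\simeq\Omega X_{v}\times\Omega\uxa^{M\setminus\{v\}}
  \times\Omega\bigl(B^{M}_{v}\ast Y_{v}\bigr)
\]
for $M=L$ and $M=K^{f}$. The inductive hypothesis handles the factor
$\Omega\uxa^{K^{f}\setminus\{v\}}$. The remaining factor requires a right inverse for the map $b_{v}\colon B^{L}_{v}\to B^{K^{f}}_{v}$, and this is obtained from the key combinatorial lemma that for a \emph{flag} complex $\link_{K^{f}}(v)$ is a full subcomplex of $K^{f}\setminus\{v\}$; that gives a retraction $\uxa^{K^{f}\setminus\{v\}}\to\uxa^{\link_{K^{f}}(v)}$, hence a section of the connecting map $\Omega\uxa^{K^{f}\setminus\{v\}}\to B^{K^{f}}_{v}$. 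One proves directly that $\Omega h=\Omega f\circ\Omega g$ has a right inverse; part~(a) then follows because any left factor of a map with a right inverse has a right inverse. No ``summand-by-summand'' comparison over all $I\subseteq[m]$ is used.
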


In particular, consider the special case when each $A_{i}$ is a point. Write
$\ux$ for $\uxa$ and notice that $\ux^{L}=X_{1}\vee\cdots\vee X_{m}$.
If $K$ is a flag complex on the vertex set $[m]$ then the simplicial map
\(\namedright{L}{}{K}\)
induces a map
\(f\colon\namedright{X_{1}\vee\cdots\vee X_{m}=\ux^{L}}{}{\ux^{K}}\).
By Theorem~\ref{main}, $\Omega f$ has a right homotopy inverse. That is,
$\Omega\ux^{K}$ is a retract of $\Omega(X_{1}\vee\cdots\vee X_{m})$.
This informs greatly on the homotopy theory of $\Omega\ux^{K}$ since
the homotopy type of $\Omega(X_{1}\vee\cdots\vee X_{m})$ has been well
studied; in particular, in the special case when each $X_{i}$ is a suspension
the Hilton-Milnor Theorem gives an explicit homotopy decomposition of
the loops on the wedge. Theorem~\ref{main} also greatly
generalizes~\cite[Theorem 5.3]{GPTW}, which stated that
such a retraction exists in the special case when each $X_{i}=\cpinf$
provided spaces and maps have been localized at a prime $p\neq 2$.

Theorem~\ref{main} can be improved in certain cases. In Section~\ref{sec:coH}
we consider polyhedral products of the form $\cyy^{K}$, where $CY$ is the
cone on $Y$, and identify the class of flag complexes $K$ for which $\cyy^{K}$ is a
co-$H$-space. As a corollary, we obtain conditions that allow for a
delooping of the statement of Theorem~\ref{main}. In Section~\ref{sec:whitehead}
we relate Theorem~\ref{main} to Whitehead products. First, we consider polyhedral
products of the form $\ux^{K}$ with flag $K$ whose $1$-skeleton is a chordal graph,
and obtain a generalisation of Porter's description of the homotopy fiber of the
inclusion of an $m$-fold wedge into a product in terms of Whitehead brackets.
Second, we consider the loop space $\Omega(\underline{S},\underline{\ast})^{K}$
on a polyhedral product formed from spheres for an arbitrary flag complex~$K$,
and obtain a generalisation of the Hilton--Milnor Theorem.

The research of the first author was carried out at the
Institute for Information Transmission Problems of Russian Academy
of Sciences and was supported by the Russian Science Foundation
(grant no. 14-50-00150). The second author would like to thank the
Royal Society for the award of an International Exchanges Grant
which helped make this research possible. The authors would also 
like to thank the referee for making several helpful comments.

\section{Combinatorial preparation}
\label{sec:combinatorics}

This section records the combinatorial information that will be
needed. We begin with some definitions.
Let $K$ be an abstract \emph{simplicial complex} on the set
$[m]=\{1,2,\ldots,m\}$, i.\,e. $K$ is a collection of subsets
$\sigma\subseteq [m]$ such that for any $\sigma\in K$ all subsets of
$\sigma$ also belong to~$K$. We refer to $\sigma\in K$ as a
\emph{simplex} (or a \emph{face}) of~$K$ and denote by $|\sigma|$
the number of elements in~$\sigma$. We always assume that the
empty set $\varnothing$ belongs to~$K$. We do not assume that $K$
contains all one-element subsets $\{i\}\subseteq[m]$. We refer to
$\{i\}\in K$ as a \emph{vertex} of~$K$, and refer to $\{i\}\notin
K$ as a \emph{ghost vertex}. We say that $K$ is a simplicial
complex \emph{on the vertex set}~$[m]$ when there are no ghost
vertices.

Let $K$ be a simplicial complex on the set~$[m]$.
For a vertex $v\in K$, the \emph{star}, \emph{restriction} (or
\emph{deletion}) and \emph{link} of $v$ are the subcomplexes
\[\begin{array}{rcl}
     \starr_{K}(v) & = & \{\tau\in K\mid \{v\}\cup\tau\in K\}; \\
     K\setminus v & =
          & \{\tau\in K\mid \{v\}\cap\tau=\varnothing\}; \\
     \link_{K}(v) & = & \starr_{K}(v)\cap K\backslash v.
   \end{array}
\]
Throughout the paper we follow the convention of regarding
$\starr_{K}(v)$ as a simplicial complex on the same set $[m]$
as~$K$, while regarding $K\setminus v$ and $\link_K v$ as
simplicial complexes on the set $[m]\setminus v$. This implies
that $\starr_{K}(v)$ and $\link_K v$ may have ghost vertices even
if $K$ does not.

The \emph{join} of two simplicial complexes $K_{1},K_{2}$ on
disjoint sets is the simplicial complex
\[K_{1}\ast K_{2}=\{\sigma_{1}\cup \sigma_{2}\mid \sigma_{i}\in K_{i}\}.\]
From the definitions, it follows that $\starr_{K}(v)$ is a join,
\[\starr_{K}(v)=\{v\}\ast\link_{K}(v),\]
and there is a pushout
\begin{equation}\label{Kpoagain}
\diagram
         \link_{K}(v)\rto\dto & \starr_{K}(v)\dto \\
         K\backslash v\rto & K.
\enddiagram
\end{equation}

A \emph{non-face} of $K$ is a subset $\omega\subseteq [m]$ such
that $\omega\notin K$. A \emph{missing face} (a \emph{minimal
non-face}) of $K$ is an inclusion-minimal non-face of $K$, that
is, a subset $\omega\subseteq[m]$ such that $\omega$ is not a
simplex of~$K$, but every proper subset of $\omega$ is a simplex
of~$K$. A ghost vertex is therefore a missing face consisting of
one element. Denote the set of missing faces of $K$ by $\MF(K)$. For a
subset $\omega\subseteq[m]$, let $\partial\omega$ denote the
collection of proper subsets of~$\omega$. Observe that
$\omega\in\MF(K)$ if and only if $\omega\notin K$ but
$\partial\omega\subseteq K$.

A simplicial complex $K$ on the set $[m]$ is called a
\emph{flag complex} if each of its missing faces consists of at
most two elements. Equivalently, $K$ is flag if any set of
vertices of $K$ which are pairwise connected by edges spans a
simplex. Every flag complex $K$ is determined by its $1$-skeleton
$K^1$, and is obtained from the graph $K^1$ by filling in all
cliques (complete subgraphs) by simplices.

\begin{lemma}
   \label{flaglink}
   Let $K$ be a flag complex on the set $[m]$ and let $v$ be
   a vertex  of $K$. If $\omega\in\MF(\link_{K}(v))$ and $|\omega|\ge2$, then
   $\omega\in\MF(K\backslash\{v\})$.
\end{lemma}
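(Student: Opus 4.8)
The plan is to unwind the definitions of missing faces, keeping careful track of the vertex-set conventions and using the flag hypothesis on $K$ at one key point. First observe that, as a simplicial complex on the set $[m]\setminus v$, the link $\link_K(v)$ is contained in $K\setminus v$, and any $\omega\in\MF(\link_K(v))$ is a subset of $[m]\setminus v$, so $v\notin\omega$ and every subset of $\omega$ lies in $[m]\setminus v$. In particular, $\omega\in K\setminus v$ if and only if $\omega\in K$, and $\partial\omega\subseteq K\setminus v$ if and only if $\partial\omega\subseteq K$; this reduces everything to statements about $K$ itself.

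Next I would record what $\omega\in\MF(\link_K(v))$ says. By definition $\omega\notin\link_K(v)$, which (since $v\notin\omega$) means $\omega\cup\{v\}\notin K$; and $\partial\omega\subseteq\link_K(v)$, which means $\tau\cup\{v\}\in K$ for every proper subset $\tau\subsetneq\omega$. Since $K$ is closed under taking subsets, the latter condition gives $\tau\in K$ for every $\tau\subsetneq\omega$, that is, $\partial\omega\subseteq K$. By the first paragraph this already establishes $\partial\omega\subseteq K\setminus v$, one of the two requirements for $\omega\in\MF(K\setminus v)$.

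It then remains to prove $\omega\notin K$. Suppose, for contradiction, that $\omega\in K$. Then any two vertices of $\omega$ are joined by an edge of $K$, since a two-element subset of a simplex is a simplex. Moreover, because $|\omega|\ge2$, each singleton $\{a\}$ with $a\in\omega$ is a proper subset of $\omega$, so $\{a,v\}=\{a\}\cup\{v\}\in K$ by the description of $\partial\omega\subseteq\link_K(v)$ above; hence $v$ is joined by an edge to every vertex of $\omega$. Thus the vertices of $\omega\cup\{v\}$ are pairwise joined by edges of $K$, and since $K$ is flag it follows that $\omega\cup\{v\}\in K$, contradicting $\omega\cup\{v\}\notin K$. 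Therefore $\omega\notin K$, so $\omega\notin K\setminus v$, and together with $\partial\omega\subseteq K\setminus v$ we conclude $\omega\in\MF(K\setminus v)$.

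As for difficulty, there is no genuine obstacle here: the statement is essentially formal once the conventions are in place. The only points requiring care are the bookkeeping around ghost vertices and vertex sets (so that ``non-face of $K\setminus v$'' matches ``non-face of $K$ not containing $v$'') and the observation that the hypothesis $|\omega|\ge2$ is precisely what makes the singletons $\{a\}\subsetneq\omega$, which is where flagness of $K$ is applied. One never needs to know whether $\link_K(v)$ is itself flag (it is), so I would not invoke that.
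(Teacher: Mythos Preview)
Your proof is correct and follows essentially the same route as the paper: both show $\partial\omega\subseteq K\setminus v$ directly from $\partial\omega\subseteq\link_K(v)$, then assume $\omega\in K$ and use the flag hypothesis to force $\omega\cup\{v\}\in K$, contradicting $\omega\notin\link_K(v)$. The only cosmetic difference is that the paper phrases the flag step as ``$\partial(\omega\ast\{v\})\subseteq K$ and $|\omega\ast\{v\}|>2$ imply $\omega\ast\{v\}\in K$'', while you use the equivalent ``pairwise connected by edges'' formulation; the content is the same.
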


\begin{proof}
Suppose not. Then there is a missing face $\omega$ of
$\link_{K}(v)$ with $\omega\in K\backslash\{v\}$ and $|\omega|\ge2$. Therefore,
$\partial\omega\subseteq\link_{K}(v)$ but
$\omega\notin\link_{K}(v)$. Since $\omega\in K\backslash\{v\}$, we
also have $\omega\in K$. On the other hand, as
$\starr_{K}(v)=\link_{K}(v)\ast\{v\}$, we have
$\partial\omega\ast\{v\}\subseteq\starr_{K}(v)$, and so
$\partial\omega\ast\{v\}\subseteq K$. Therefore
$\partial\omega\ast\{v\}\cup\omega\subseteq K$.

Observe that $\partial\omega\ast\{v\}\cup\omega=\partial\tau$
where~$\tau=\omega\ast\{v\}$. Thus $\partial\tau\subseteq K$. As
$K$ is flag and $|\omega\ast\{v\}|>2$, this implies that
$\tau=\omega\ast\{v\}\in K$. Hence, $\omega\in\link_K v$, a
contradiction.
\end{proof}

\begin{lemma}
   \label{slrflag}
   Let $K$ be a flag complex on the set $[m]$ and let $v$ be a vertex
   of $K$. Then $K\backslash\{v\}$, $\starr_{K}(v)$ and $\link_{K}(v)$ are all
   flag complexes.
\end{lemma}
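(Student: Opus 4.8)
The plan is to treat the three subcomplexes in turn, reducing the cases of $\link_{K}(v)$ and $\starr_{K}(v)$ to that of $K\setminus\{v\}$. Throughout I use that a simplicial complex is flag exactly when it has no missing face of cardinality at least three: missing faces of size one (ghost vertices) or two are permitted, which is why it will be enough to invoke Lemma~\ref{flaglink} only in the range $|\omega|\ge 2$.

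First I would handle $K\setminus\{v\}$, which is the full subcomplex of $K$ on the set $[m]\setminus\{v\}$. The relevant fact is that a full subcomplex of a flag complex is flag. Suppose $\omega\subseteq[m]\setminus\{v\}$ satisfies $\partial\omega\subseteq K\setminus\{v\}$ and $|\omega|\ge 3$. Since $v\notin\omega$, every proper subset of $\omega$ already avoids $v$, so $\partial\omega\subseteq K$; if $\omega\notin K$, then $\omega$ would be a missing face of $K$ of size at least three, contradicting flagness of $K$. Hence $\omega\in K$, and as $v\notin\omega$ also $\omega\in K\setminus\{v\}$. So $K\setminus\{v\}$ has no missing face of size at least three, i.e.\ it is flag.

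Next, for $\link_{K}(v)$, suppose it has a missing face $\omega$ with $|\omega|\ge 3$. Then $|\omega|\ge 2$, so Lemma~\ref{flaglink} gives $\omega\in\MF(K\setminus\{v\})$, contradicting the flagness of $K\setminus\{v\}$ just established; hence $\link_{K}(v)$ is flag. For $\starr_{K}(v)$ I would combine the identity $\starr_{K}(v)=\{v\}\ast\link_{K}(v)$ with the general fact that a join of flag complexes is flag. To see that last fact, let $K_{1},K_{2}$ be flag on disjoint sets $V_{1},V_{2}$, let $\omega\subseteq V_{1}\cup V_{2}$ with $\partial\omega\subseteq K_{1}\ast K_{2}$ and $|\omega|\ge 3$, and set $\omega_{i}=\omega\cap V_{i}$. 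If one of the $\omega_{i}$ is empty, say $\omega_{2}=\varnothing$, then $\omega\subseteq V_{1}$ and $\partial\omega\subseteq K_{1}$, so flagness of $K_{1}$ gives $\omega\in K_{1}\subseteq K_{1}\ast K_{2}$; the case $\omega_{1}=\varnothing$ is symmetric. If both are nonempty, pick $v_{2}\in\omega_{2}$; then $\omega\setminus\{v_{2}\}\in\partial\omega\subseteq K_{1}\ast K_{2}$ and $\omega_{1}\subseteq\omega\setminus\{v_{2}\}$, so $\omega_{1}\in K_{1}$, and symmetrically $\omega_{2}\in K_{2}$, whence $\omega=\omega_{1}\cup\omega_{2}\in K_{1}\ast K_{2}$. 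In every case $\omega\in K_{1}\ast K_{2}$, so the join is flag. Applying this with $K_{1}=\{v\}$, which is trivially flag, and $K_{2}=\link_{K}(v)$, shown flag above, yields that $\starr_{K}(v)$ is flag.

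The only real ingredient is Lemma~\ref{flaglink}; everything else is bookkeeping. I expect the fussiest point to be the join argument for $\starr_{K}(v)$ — correctly splitting $\omega$ across the two vertex sets and applying flagness of each factor — together with keeping straight the convention that $\starr_{K}(v)$ lives on $[m]$ while $\link_{K}(v)$ and $K\setminus\{v\}$ live on $[m]\setminus\{v\}$, so that the reduction in Lemma~\ref{flaglink} lands where it should.
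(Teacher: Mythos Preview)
Your proof is correct. The treatment of $K\setminus\{v\}$ and $\link_{K}(v)$ matches the paper's exactly: both argue that a full subcomplex of a flag complex is flag, and both invoke Lemma~\ref{flaglink} to push a hypothetical large missing face of $\link_{K}(v)$ into $\MF(K\setminus\{v\})$.

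The one genuine difference is in the handling of $\starr_{K}(v)$. The paper argues directly: given $\omega\in\MF(\starr_{K}(v))$ with $|\omega|\ge 2$, it shows $\omega\in\MF(K)$ by noting that if $\omega\in K$ then $v\notin\omega$ and $\partial(\omega\ast\{v\})\subseteq K$, forcing $\omega\ast\{v\}\in K$ by flagness of $K$ and hence $\omega\in\starr_{K}(v)$, a contradiction. You instead deduce flagness of $\starr_{K}(v)$ from the decomposition $\starr_{K}(v)=\{v\}\ast\link_{K}(v)$ together with the general lemma that a join of flag complexes is flag, which you prove cleanly. Your route is slightly more structural and yields a reusable fact along the way, at the cost of making the $\starr_{K}(v)$ case depend on the $\link_{K}(v)$ case (and hence on Lemma~\ref{flaglink}); the paper's direct argument for $\starr_{K}(v)$ is self-contained and essentially reruns the idea behind Lemma~\ref{flaglink} in situ. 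Both are short and equally valid.
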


\begin{proof}
Since $K\backslash\{v\}$ is a full subcomplex of $K$, any missing
face of $K\backslash\{v\}$ is also a missing face of~$K$. So as
$K$ is flag, any missing face has at most two elements, implying
that any missing face of $K\backslash\{v\}$ also has at most two
elements. Thus $K\backslash\{v\}$ is flag.

Let $\omega\in\MF(\starr_{K}(v))$ and $|\omega|\ge2$. We
claim that $\omega\in\MF(K)$ as well. As
$\partial\omega\subseteq\starr_{K}(v)$, we also have
$\partial\omega\subseteq K$, so if the claim does not hold then it
must be the case that $\omega\in K$. Then $v\notin\omega$, as
otherwise $\omega\in\starr_{K}(v)$. For $\tau=\omega\ast v$ we
have $\partial\tau=\partial\omega\ast v\cup\omega\in K$. As $K$ is
flag and $|\omega\ast\{v\}|>2$, we obtain $\tau=\omega\ast\{v\}\in
K$. This implies that $\omega\in\starr_{K}(v)$, a contradiction.
Hence, $\omega\in\MF(K)$ and so $|\omega|=2$ since $K$ is flag.
Thus $\starr_{K}(v)$ is flag.

Let $\omega\in\MF(\link_{K}(v))$ and $|\omega|\ge2$. By
Lemma~\ref{flaglink}, $\omega\in\MF(K\backslash\{v\})$ as well. It
has already been established that $K\backslash\{v\}$ is flag, so
we have $|\omega|=2$. Thus $\link_{K}(v)$ is also flag.
\end{proof}

Given a subset $\omega\subseteq[m]$, the full subcomplex of $K$
on $\omega$ is
\[
  K_\omega=\{\sigma\in K\mid \sigma\subseteq\omega\}.
\]
Note that $K\setminus\{v\}=K_{[m]\setminus\{v\}}$.
A key property that will be important subsequently is the
following.

\begin{lemma}
   \label{keyflag}
   Let $K$ be a flag complex on the set $[m]$ and let $v$ be a vertex
   of $K$. Then $\link_{K}(v)$ is a full subcomplex of $K\backslash\{v\}$.
\end{lemma}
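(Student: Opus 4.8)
The plan is to show $\link_K(v) = (K\setminus\{v\})_\omega$ where $\omega$ is the vertex set of $\link_K(v)$, i.e.\ $\omega = \{u \in [m]\setminus\{v\} \mid \{u,v\}\in K\}$. Since both sides are simplicial complexes on the set $[m]\setminus\{v\}$ and $\link_K(v)\subseteq K\setminus\{v\}$ with vertex set contained in $\omega$, we automatically have $\link_K(v)\subseteq (K\setminus\{v\})_\omega$. The content is the reverse inclusion: every simplex of $K\setminus\{v\}$ all of whose vertices lie in $\omega$ is already a simplex of $\link_K(v)$.

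So first I would take $\sigma\in K\setminus\{v\}$ with $\sigma\subseteq\omega$ and aim to show $\sigma\ast\{v\}\in K$, which is exactly the condition $\sigma\in\link_K(v)$. For each vertex $u\in\sigma$ we have $u\in\omega$, so $\{u,v\}\in K$ by definition of $\omega$; and since $\sigma\in K$, all vertices of $\sigma\ast\{v\}$ are pairwise joined by edges of $K$ (pairs within $\sigma$ are edges because $\sigma$ is a simplex; pairs $\{u,v\}$ are edges by the previous sentence). Because $K$ is flag, a set of vertices that is pairwise connected by edges spans a simplex, hence $\sigma\ast\{v\}\in K$, giving $\sigma\in\link_K(v)$. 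This proves $(K\setminus\{v\})_\omega\subseteq\link_K(v)$, and the result follows.

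I expect no serious obstacle here; the only point requiring a little care is the bookkeeping of vertex sets — making sure that $\link_K(v)$ is being regarded as a complex on $[m]\setminus\{v\}$ (as fixed by the paper's convention), that $\omega$ is precisely its vertex set, and that the full subcomplex $(K\setminus\{v\})_\omega$ is taken in the sense defined just above the lemma. Once those conventions are pinned down, the flagness of $K$ does all the work, and in fact this argument is essentially the $|\omega|\ge 2$ case of Lemma~\ref{flaglink} read contrapositively together with the trivial handling of vertices. Alternatively one could phrase the whole thing as: a subset $\sigma\subseteq[m]\setminus\{v\}$ lies in $\link_K(v)$ iff $\sigma\cup\{v\}\in K$ iff (by flagness) every pair of vertices of $\sigma\cup\{v\}$ is an edge iff $\sigma$ is a clique of $K$ and every vertex of $\sigma$ is adjacent to $v$ iff $\sigma\in K\setminus\{v\}$ and $\sigma\subseteq\omega$ — which is the definition of $(K\setminus\{v\})_\omega$.
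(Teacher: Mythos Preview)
Your proof is correct. It differs from the paper's in presentation rather than in substance: the paper argues by contradiction, takes a would-be counterexample $\sigma\in K\setminus\{v\}$ with $\sigma\subseteq\omega$ and $\sigma\notin\link_K(v)$, shrinks it to a missing face of $\link_K(v)$ of size $\ge 2$, and then invokes Lemma~\ref{flaglink} to conclude $\sigma\notin K\setminus\{v\}$, a contradiction. You instead argue directly from the clique definition of flagness that $\sigma\cup\{v\}$ is a simplex, which is exactly what the proof of Lemma~\ref{flaglink} unwinds to. Your route is slightly more self-contained (it does not need the missing-face formulation or the auxiliary lemma), while the paper's route reuses machinery already set up; you yourself note the equivalence in your final paragraph.
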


\begin{proof}
Let $\omega$ be the vertex set of $\link_{K}(v)$. Suppose that
$\link_{K}(v)$ is not a full subcomplex of $K\backslash\{v\}$.
Then there is a face $\sigma\in K\backslash\{v\}$ such that
$\sigma\subseteq\omega$ and $\sigma\notin\link_{K}(v)$. By selecting
a proper face of~$\sigma$ if necessary, we may assume that
$\sigma$ is a missing face of $\link_{K}(v)$ with $|\sigma|\ge2$.
But then as $K$ is flag, Lemma~\ref{flaglink} implies that
$\sigma$ is also a missing face of $K\backslash\{v\}$. In
particular, $\sigma\notin K\backslash\{v\}$, a contradiction.~\end{proof}

\section{Homotopy theoretic preparation}
\label{sec:htpy} 

\subsection{The Cube Lemma} 
Assume that all spaces are pointed and have nondegenerate basepoints, 
implying that the inclusion of the basepoint is a cofibration. This holds, for example, 
for pointed $CW$-complexes, and hence for polyhedral products.  
One part of Mather's Cube Lemma~\cite{Ma} states that if there is a
diagram of spaces and maps
\[\spreaddiagramcolumns{-1pc}\spreaddiagramrows{-1pc}
   \diagram
   E\rrto\drto\ddto & & F\dline\drto & \\
   & G\rrto\ddto & \dto & H\ddto \\
   A\rline\drto & \rto & B\drto & \\
   & C\rrto & & D
\enddiagram\]
where the bottom face is a homotopy pushout and the four sides are
obtained by pulling back with
\(\namedright{H}{}{D}\),
then the top face is also a homotopy pushout. In what follows this will be used
to identify the homotopy type of the pushout $H$ in a certain context. However,
we need this identification to have a naturality property, which is not immediate
from the statement of the Cube Lemma. To obtain this, we prove a special
case of the Cube Lemma from first principles.

In what follows, we work with strictly commutative pushouts and pullbacks rather
than homotopy commutative ones. For a space~$Y$ let $1_{Y}$ be the identity map
on $Y$. Suppose that there is a strictly commutative diagram
\begin{equation}
  \label{cubedata}
  \diagram
        B\times A\rto^-{1_{B}\times i}\dto^{j\times 1_{A}}
            & B\times X\dto\ddrto^{j\times 1_{X}} & \\
        C\times A\rto\drrto_{1_{C}\times i} & D\drto^-(0.4){f} & \\
           & & C\times X
  \enddiagram
\end{equation}
where the square is a pushout, and the maps $i$, $j$ and $f$ are pointed inclusions
of subspaces. We will turn the maps $f$, $1_{C}\times i$, $j\times 1_{X}$ and
$j\times i$ from the four corners of the pushout to $C\times X$ into fibrations,
up to homotopy, and examine their fibres.

There is a standard way of turning a pointed, continuous map
\(g\colon\namedright{Y}{}{Z}\)
between locally compact, Hausdorff spaces into a fibration, up to homotopy.
Let $I$ be the unit interval and let $\mbox{Map}(I,Z)$ be the space of continuous
(not necessarily pointed) maps from $I$ to $Z$. Let
\(d\colon\namedright{\mbox{Map}(I,Z)}{}{Z\times Z}\)
be defined by evaluating a map
\(\omega\colon\namedright{I}{}{Z}\)
at the two endpoints, explicitly, $d(\omega)=(\omega(0),\omega(1))$.
Define the space $\widetilde{P}_{g}$ by the pullback
\[\diagram
       \widetilde{P}_{g}\rto\dto & \mbox{Map}(I,Z)\dto^{ev_{0}} \\
       Y\rto^-{g} & Z
  \enddiagram\]
where $ev_{0}(\omega)=\omega(0)$. As a set,
\begin{equation}
  \label{tildePpoints}
  \widetilde{P}_{g}=\{(y,\omega)\in Y\times\mbox{\rm Map}(I,Z)\mid
       \omega(0)=g(y)\}.
\end{equation}

Then, as in~\cite[p. 59]{Se} for example, there is an inclusion
\(\namedright{Y}{}{\widetilde{P}_{g}}\)
which is a homotopy equivalence and the composite
\[q\colon\nameddright{\widetilde{P}_{g}}{}{\mbox{Map}(I,Z)}{ev_{1}}{Z}\]
is a fibration, where $ev_{1}(\omega)=\omega(1)$.
Moreover, if $1$ is the basepoint of $I$ and $PZ$ is the path space of~$Z$
(with paths at time $1$ ending at the basepoint of $Z$), then the fibre of $q$
is homeomorphic to the mapping path space of $g$,
\begin{equation}
  \label{Ppoints}
  P_{g}=\{(y,\omega)\in Y\times PZ\mid\omega(0)=g(y)\},
\end{equation}
which is obtained by the pullback
\[\diagram
      P_{g}\rto\dto & PZ\dto^{ev_{0}} \\
      Y\rto^-{g} & Z.
  \enddiagram\]

Consider how these constructions behave with respect to pointed subspace inclusions. Let
\(\namedright{S}{s}{Y}\)
be the inclusion of a pointed subspace. If $Q$ is the pullback of
\(\namedright{S}{s}{Y}\)
and
\(\namedright{\widetilde{P}_{g}}{}{Y}\),
then the pullback defining~$\widetilde{P}_{g}$ implies that $Q$ is also the
pullback of $g\circ s$ and $ev_{0}$. But this pullback is the
definition of $\widetilde{P}_{g\circ s}$, so $Q=\widetilde{P}_{g\circ s}$.
Similarly for $P_{g\circ s}$, giving pullbacks
\[\diagram
       \widetilde{P}_{g\circ s}\rto\dto & \widetilde{P}_{g}\dto & &
            P_{g\circ s}\rto\dto & P_{g}\dto \\
        S\rto^-{s} & Y & & S\rto^-{s} & Y.
  \enddiagram\]
Since $P_{g}$ and $P_{g\circ s}$ are the respective fibres of $\widetilde{P}_{g}$
and $\widetilde{P}_{g\circ s}$ over $Z$, we obtain a pullback
\begin{equation}
  \label{cubepb}
  \diagram
       P_{g\circ s}\rto\dto & P_{g}\dto \\
       \widetilde{P}_{g\circ s}\rto & \widetilde{P}_{g}.
  \enddiagram
\end{equation}

Next, suppose that $Y$ is the union of pointed, closed 
subspaces $S$ and~$T$. Let
\(s\colon\namedright{S}{}{Y}\)
and
\(t\colon\namedright{T}{}{Y}\)
be the pointed subspace inclusions, and let $u$ and~$v$ be the pointed subspace inclusions
\(u\colon\namedright{S\cap T}{}{S}\)
and
\(v\colon\namedright{S\cap T}{}{T}\). 
Since $S$ and $T$ are closed subspaces of $Y$, the pushout of $u$ and~$v$ is $Y$. 
(More generally this is true if $(Y;S,T)$ is an excisive triad, but we do not need this 
level of generality - in our case each of $S$, $T$ and $Y$ will be certain polyhedral products.)  

\begin{lemma} 
   \label{tildePpo} 
   Suppose that 
   \(\namedright{Y}{g}{Z}\) 
   is a pointed subspace inclusion and that $Y=S\cup T$ where $S$ and $T$ are closed, 
   pointed subspaces of $Y$ Then there are pushouts 
   \[\diagram
         \widetilde{P}_{g\circ s\circ u}\rto\dto & \widetilde{P}_{g\circ t}\dto 
         & & P_{g\circ s\circ u}\rto\dto & P_{g\circ t}\dto \\
         \widetilde{P}_{g\circ s}\rto & \widetilde{P}_{g} & & P_{g\circ s}\rto & P_{g}. 
     \enddiagram\] 
\end{lemma}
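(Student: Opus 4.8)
The plan is to deduce both pushouts from a single topological fact: that the
construction $g\mapsto\widetilde P_g$ (and likewise $g\mapsto P_g$) is, up to
homeomorphism, a \emph{pullback} of the target along the source, and that
pullback along a fixed map preserves the pushout $Y=S\cup T$. Concretely, since
$Y=S\cup T$ with $S,T$ closed, there is a pushout square with corners
$S\cap T$, $S$, $T$, $Y$, and the four relevant maps into $Y$ are $g\circ s\circ
u=g\circ t\circ v$, $g\circ s$, $g\circ t$, and $g$. Applying the identity
$\widetilde P_{g\circ(\,\cdot\,)}$ to each corner — which by the observation
preceding~\eqref{cubepb} is literally the pullback of $\widetilde P_g\to Y$
along the corresponding subspace inclusion into $Y$ — produces the left-hand
square in the statement, and similarly for $P$. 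So the real content is: if
$Y=S\cup T$ is a pushout of closed subspace inclusions and
\(W\to Y\) is any map, then pulling $W$ back over $S$, $T$, $S\cap T$ yields a
pushout square with total space $W$.

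First I would make the set-level identification explicit, using
\eqref{tildePpoints}: a point of $\widetilde P_g$ is a pair $(y,\omega)$ with
$y\in Y$ and $\omega(0)=g(y)$; restricting to $y\in S$ gives
$\widetilde P_{g\circ s}$, to $y\in T$ gives $\widetilde P_{g\circ t}$, and to
$y\in S\cap T$ gives $\widetilde P_{g\circ s\circ u}$. Since every $y\in Y$ lies
in $S$ or in $T$, as a set $\widetilde P_g=\widetilde P_{g\circ s}\cup
\widetilde P_{g\circ t}$ with intersection $\widetilde P_{g\circ s\circ u}$, and
the squares commute on the nose. Next I would check that $\widetilde P_{g\circ
s}$ and $\widetilde P_{g\circ t}$ are \emph{closed} in $\widetilde P_g$: each is
the preimage of the closed set $S$ (resp. $T$) under the continuous projection
$\widetilde P_g\to Y$, so the pair $(\widetilde P_g;\widetilde P_{g\circ
s},\widetilde P_{g\circ t})$ is a union of two closed subspaces. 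Then the
general fact that a space which is the union of two closed subspaces is the
pushout of those subspaces over their intersection applies verbatim, giving the
left-hand pushout. The argument for the right-hand square is identical, working
with the mapping path space description~\eqref{Ppoints} instead: restrict the
coordinate $y$ to $S$, $T$, $S\cap T$, and note $P_{g\circ s}=p^{-1}(S)$,
$P_{g\circ t}=p^{-1}(T)$ for the projection $p\colon P_g\to Y$, so these are
again closed and cover $P_g$.

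I expect the only subtlety — and hence the main thing to be careful about — is
the point-set topology: one must confirm that the subspace topology on
$\widetilde P_{g\circ s}$ coming from $\widetilde P_g$ agrees with its intrinsic
topology as the pullback $\widetilde P_{g\circ s}$ (equivalently, that the
canonical continuous bijection between them is a homeomorphism), and likewise
for $T$ and $S\cap T$. This is where the closedness of $S$ and $T$ in $Y$ is
used: it guarantees that $\widetilde P_{g\circ s}$ and $\widetilde P_{g\circ t}$
are closed in $\widetilde P_g$, and closed subspace inclusions behave well under
the pullback along the projection $\widetilde P_g\to Y$, so the two topologies
coincide. Once this is in hand, invoking that a space is the pushout of two
closed subspaces over their intersection is routine. (In our applications $S$,
$T$, $Y$ will be polyhedral products, which are $CW$-complexes, so all the
closedness and local-compactness hypotheses are automatic.) Finally I would
remark that the same reasoning, applied to the square~\eqref{cubepb}, shows all
of these squares fit together compatibly, which is the naturality we will need
when feeding the lemma into the Cube Lemma argument.
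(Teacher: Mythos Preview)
Your proposal is correct and follows essentially the same approach as the paper: identify $\widetilde P_{g\circ s}$, $\widetilde P_{g\circ t}$, and $\widetilde P_{g\circ s\circ u}$ as the subspaces of $\widetilde P_g$ lying over $S$, $T$, $S\cap T$ respectively, note they are closed (as preimages of closed sets), and conclude the pushout from the union--intersection description. One small remark: the agreement of the intrinsic pullback topology on $\widetilde P_{g\circ s}$ with the subspace topology from $\widetilde P_g$ is automatic (pullback along a subspace inclusion is the preimage with subspace topology) and does not require closedness; closedness is used only to ensure that the union of two closed subspaces is the pushout over their intersection.
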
 

\begin{proof} 
By its definition, $\widetilde{P}_{g}$ is the space of paths on $Z$ that begin
in $\mbox{Im}(g)$ and end in $Z$. As $g$ is a subspace inclusion, we may
regard $\widetilde{P}_{g}$ as the space of paths on $Z$ that begin in $Y$
and end in $Z$. As $Y=S\cup T$, any such path either begins in $S$ or in $T$ -
that is - the path is either in $\widetilde{P}_{g\circ s}$ or $\widetilde{P}_{g\circ t}$.
Moreover, the intersection $\widetilde{P}_{g\circ s}\cap\widetilde{P}_{g\circ t}$
is all paths on $Z$ that begin in $S\cap T$ and end in $Z$ - that is -
the paths in $\widetilde{P}_{g\circ s\circ u}=\widetilde{P}_{g\circ t\circ v}$. 
Thus $\widetilde{P}_{g}=\widetilde{P}_{g\circ s}\cup\widetilde{P}_{g\circ t}$ and 
$\widetilde{P}_{g\circ s\circ u}=\widetilde{P}_{g\circ s}\cap\widetilde{P}_{g\circ t}$. 
Further, since $S$ and $T$ are closed subspaces of $Y$, we have 
$\widetilde{P}_{g\circ s}$ and $\widetilde{P}_{g\circ t}$ closed subspaces of 
$\widetilde{P}_{g}$. Therefore there is a pushout 
\begin{equation}
  \label{cubepo}
  \diagram
      \widetilde{P}_{g\circ s\circ u}\rto\dto & \widetilde{P}_{g\circ t}\dto \\
      \widetilde{P}_{g\circ s}\rto & \widetilde{P}_{g}.
  \enddiagram
\end{equation}
The same argument shows that $P_{g}$ is the pushout of $P_{g\circ s}$
and $P_{g\circ t}$ over $P_{g\circ s\circ u}=P_{g\circ t\circ v}$. 
\end{proof} 

Now apply this construction to the maps $f$, $1_{C}\times i$, $j\times 1_{X}$
and $j\times i$ from the four corners of the pushout in~(\ref{cubedata})
to $C\times X$.

\begin{lemma}
   \label{Pcube}
   There is a commutative cube
   \[\spreaddiagramcolumns{-1pc}\spreaddiagramrows{-1pc}
      \diagram
      P_{j\times i}\rrto\drto\ddto & & P_{j\times 1_{X}}\dline\drto & \\
      & P_{1_{C}\times i}\rrto\ddto & \dto & P_{f}\ddto \\
      \widetilde{P}_{j\times i}\rline\drto & \rto & \widetilde{P}_{j\times 1_{X}}\drto & \\
      & \widetilde{P}_{1_{C}\times i}\rrto & & \widetilde{P}_{f}
   \enddiagram\]
   where the top and bottom faces are pushouts and the four sides
   are pullbacks. Further, this cube is natural for maps of diagrams
   of the form~(\ref{cubedata}).
\end{lemma}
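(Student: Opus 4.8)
The plan is to build the cube directly from the four mapping path space constructions and the pushout results already established, rather than invoking the Cube Lemma as a black box.

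First I would observe that the bottom face of the cube is exactly the pushout \eqref{cubepo} of Lemma~\ref{tildePpo}, applied in the following way. Take $Y = C\times X$, $g$ the identity map on $C\times X$, and write $C\times X = S\cup T$ where $S = D$ (the pushout in \eqref{cubedata}, which sits inside $C\times X$ via $f$) and $T = B\times X$ (included via $j\times 1_{X}$). One checks from the pushout square in \eqref{cubedata} that $D$ and $B\times X$ are closed subspaces whose union is $C\times X$ and whose intersection is $C\times A$ (included via $1_{C}\times i$); the common refinement $S\cap T$ maps into $S$ and $T$ by inclusions whose further composites with $g$ are $f$ restricted to $C\times A$ and $j\times 1_{X}$ restricted to $C\times A$, respectively, and the triple composite is $j\times i$. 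Thus Lemma~\ref{tildePpo} immediately yields both the bottom pushout $\widetilde{P}_{j\times i}$, $\widetilde{P}_{1_{C}\times i}$, $\widetilde{P}_{j\times 1_{X}}$, $\widetilde{P}_{f}$ and the top pushout with the $P$'s in place of the $\widetilde{P}$'s.

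Next I would produce the four side faces. Each side face is an instance of the pullback square \eqref{cubepb}: for a pointed subspace inclusion $\namedright{S'}{s'}{Y'}$ and a map $\namedright{Y'}{g'}{Z'}$, the square with corners $P_{g'\circ s'}$, $P_{g'}$, $\widetilde{P}_{g'\circ s'}$, $\widetilde{P}_{g'}$ is a pullback. Applying this with $g'$ the identity on $C\times X$ and $s'$ ranging over the four subspace inclusions $D\hookrightarrow C\times X$, $(B\times X)\hookrightarrow C\times X$, $(C\times A)\hookrightarrow C\times X$, $(B\times A)\hookrightarrow C\times X$ (the last via $j\times i$) gives the four vertical pullback squares relating the front-left, front-right, back-left and back-right edges. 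Commutativity of the whole cube then follows because all maps in sight are subspace inclusions (or induced on path spaces by subspace inclusions), so every square commutes strictly on the nose; there is nothing to check beyond set-theoretic containments among these path spaces.

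Finally, naturality. A map of diagrams of the form \eqref{cubedata} consists of maps $B\to B'$, $C\to C'$, $A\to A'$, $X\to X'$ commuting with $i,j,i',j'$, hence inducing $D\to D'$ and a map $C\times X\to C'\times X'$ compatible with all the inclusions above. Since $\widetilde{P}_{g}$ and $P_{g}$ are defined by explicit pullbacks involving $\mbox{Map}(I,-)$ and $P(-)$, which are functorial, any such map of diagrams induces a map of the entire cube; the compatibility with the pushout and pullback structures is automatic from functoriality of pullbacks. I expect the only mildly delicate point to be verifying the closedness and union/intersection claims needed to invoke Lemma~\ref{tildePpo} — that $D$ and $B\times X$ are closed in $C\times X$ with intersection exactly $C\times A$ — but this is a direct consequence of the pushout in \eqref{cubedata} together with the assumption that $i$ and $j$ are subspace inclusions, so it is routine rather than a genuine obstacle.
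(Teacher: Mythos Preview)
Your setup for applying Lemma~\ref{tildePpo} is incorrect. You take $Y=C\times X$, $g=1_{C\times X}$, $S=D$, $T=B\times X$, and then assert that $D\cup(B\times X)=C\times X$ with $D\cap(B\times X)=C\times A$. Both claims are false: since $D$ is the pushout of $C\times A$ and $B\times X$ over $B\times A$, we have $B\times X\subseteq D$, so $D\cup(B\times X)=D$ (not $C\times X$) and $D\cap(B\times X)=B\times X$ (not $C\times A$). With your choices the output of Lemma~\ref{tildePpo} would be a pushout whose bottom-right corner is $\widetilde P_{1_{C\times X}}$, not $\widetilde P_f$, and the other corners would not match the cube either. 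The correct instantiation --- which is exactly what the paper does --- is $Y=D$, $g=f\colon D\to C\times X$, $S=C\times A$, $T=B\times X$; then $S\cap T=B\times A$, and $g\circ s=1_C\times i$, $g\circ t=j\times 1_X$, $g\circ s\circ u=j\times i$, giving precisely the required pushouts.

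The same mis-setup infects your treatment of the side faces. Applying~\eqref{cubepb} with $g'=1_{C\times X}$ and $s'$ one of the four inclusions into $C\times X$ produces four pullback squares each having $P_{1_{C\times X}}\to\widetilde P_{1_{C\times X}}$ as one edge; none of these is a side face of the cube in the statement. The four sides you actually need come from~\eqref{cubepb} with $(g,s)$ equal to $(f,\,C\times A\hookrightarrow D)$, $(f,\,B\times X\hookrightarrow D)$, $(j\times 1_X,\,1_B\times i)$, and $(1_C\times i,\,j\times 1_A)$, respectively. Once you correct the setup the rest of your outline (strict commutativity from subspace inclusions, naturality from functoriality of $P_g$ and $\widetilde P_g$) goes through, and the argument becomes identical to the paper's.
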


\begin{proof}
Since $f$, $1_{C}\times i$, $j\times 1_{X}$ and $j\times i$ are all subspace
inclusons, the four sides of the cube are pullbacks by~(\ref{cubepb}). Since $D$
is a pushout, it is the union of $C\times A$ and $B\times X$ with intersection
$B\times A$. The top and bottom faces of the cube are therefore pushouts
by~(\ref{cubepo}). The naturality statement holds since the constructions
of $\widetilde{P}_{g}$ and $P_{g}$ are natural.
\end{proof}

The top face of the cube in Lemma~\ref{Pcube} will be more precisely
identified. This requires two lemmas.

\begin{lemma}
   \label{Pdecomp-old}
   A map
   \(g\times h\colon\namedright{Y\times M}{}{Z\times N}\)
   has $P_{g\times h}=P_{g}\times P_{h}$. Further, this decomposition is natural
   for compositions
   \(s\times t\colon\namedright{Z\times N}{}{Z'\times N'}\).
\end{lemma}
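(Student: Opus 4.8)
The plan is to reduce the statement to the canonical homeomorphism $P(Z\times N)\cong PZ\times PN$. Recall from~\eqref{Ppoints} that $P_g=\{(y,\omega)\in Y\times PZ\mid\omega(0)=g(y)\}$, where $PZ$ is the space of paths in $Z$ ending at the basepoint. Applied to $g\times h\colon Y\times M\to Z\times N$, a point of $P_{g\times h}$ is a pair $((y,m),\omega)$ with $\omega$ a path in $Z\times N$ ending at the basepoint of $Z\times N$ and $\omega(0)=(g(y),h(m))$. Since the basepoint of $Z\times N$ is the pair of basepoints, sending $\omega$ to its pair of coordinate paths $(\mathrm{pr}_Z\circ\omega,\mathrm{pr}_N\circ\omega)$ defines a homeomorphism $P(Z\times N)\xrightarrow{\cong}PZ\times PN$: a path ends at $\ast_{Z\times N}$ precisely when both coordinate paths end at their respective basepoints. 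Under this identification, writing $\omega=(\omega_Z,\omega_N)$, the single equation $\omega(0)=(g(y),h(m))$ is equivalent to the pair of equations $\omega_Z(0)=g(y)$ and $\omega_N(0)=h(m)$, so $((y,m),(\omega_Z,\omega_N))\mapsto\big((y,\omega_Z),(m,\omega_N)\big)$ is the desired homeomorphism $P_{g\times h}\xrightarrow{\cong}P_g\times P_h$. Equivalently, $P_{g\times h}$ is the pullback of $g\times h$ along $ev_0\colon P(Z\times N)\to Z\times N$, and since $ev_0$ is identified with $ev_0\times ev_0\colon PZ\times PN\to Z\times N$, this pullback is the product of the pullbacks defining $P_g$ and $P_h$.

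For the naturality claim, recall that a pointed map $s\colon Z\to Z'$ induces a map $P_g\to P_{s\circ g}$ by $(y,\omega)\mapsto(y,s\circ\omega)$; this is well defined because $s$ is pointed, so $s\circ\omega$ still ends at the basepoint, and $(s\circ\omega)(0)=s(g(y))$. Given $s\times t\colon Z\times N\to Z'\times N'$ one has $(s\times t)\circ(g\times h)=(s\circ g)\times(t\circ h)$, and the induced map $P_{g\times h}\to P_{(s\circ g)\times(t\circ h)}$ post-composes the path coordinate with $s\times t$. Under the homeomorphism $P(Z\times N)\cong PZ\times PN$, post-composition with $s\times t$ corresponds to acting by $s\circ(-)$ on the first coordinate and $t\circ(-)$ on the second, so the square comparing $P_{g\times h}\to P_{(s\circ g)\times(t\circ h)}$ with $P_g\times P_h\to P_{s\circ g}\times P_{t\circ h}$ commutes. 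That is the asserted naturality.

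Both the homeomorphism and its inverse are given by explicit continuous formulas built only from the universal property of products and composition of paths, so there is essentially no obstacle. The one point needing a line of care is the verification that $ev_0\colon P(Z\times N)\to Z\times N$ coincides with $ev_0\times ev_0$ under the identification $P(Z\times N)\cong PZ\times PN$, since this is exactly what turns the single pullback defining $P_{g\times h}$ into a product of the two pullbacks defining $P_g$ and $P_h$, and also what makes the naturality square commute.
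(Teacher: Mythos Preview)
Your proof is correct and follows essentially the same approach as the paper: both reduce the claim to the canonical homeomorphism $P(Z\times N)\cong PZ\times PN$, observe that under this identification $ev_0$ becomes $ev_0\times ev_0$, and conclude that the pullback defining $P_{g\times h}$ is the product of the pullbacks defining $P_g$ and $P_h$. Your naturality argument is slightly more explicit than the paper's (which just says the decomposition $P(Z\times N)\cong PZ\times PN$ is natural and therefore so is the identification), but the content is the same.
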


\begin{proof}
First observe that $P(Z\times N)=PZ\times PN$ since any pointed path
\(\omega\colon\namedright{I}{}{Z\times N}\)
is equivalent to the product of the pointed paths
\(\omega_{1}\colon\namedright{I}{}{Z}\)
and
\(\omega_{2}\colon\namedright{I}{}{N}\)
given by projecting $\omega$ to $Z$ and $N$ respectively. Moreover, the
evaluation map
\(\namedright{P(Z\times N)}{ev_{0}}{Z\times N}\)
becomes a product of evaluation maps
\(\llnamedright{PZ\times PN}{ev_{0}\times ev_{0}}{Z\times N}\).
Thus the pullback $P_{g\times h}$ is identical to the pullback
\[\diagram
       Q\rto\dto & PZ\times PN\dto^{ev_{0}\times ev_{0}}\dto \\
       Y\times M\rto^-{s\times t} & Z\times N,
  \enddiagram\]
where
\begin{eqnarray*}
   Q & = & \{((y,m),(\omega_{1},\omega_{2})\in Y\times M\times PZ\times PN\mid
        s(y)=\omega_{1}(0), t(m)=\omega_{2}(0)\}  \\
      & = & \{(y,\omega_{1})\in Y\times PZ\mid s(y)=\omega_{1}(0)\}\times
         \{(m,\omega_{2})\in M\times PN\mid t(m)=\omega_{2}(0)\} \\
      & = & P_{s}\times P_{t}.
\end{eqnarray*}

The identification of $P_{s\times t}$ as $P_{s}\times P_{t}$ only used
the fact that $P(Z\times N)=PZ\times PN$. As the latter decomposition
is natural, therefore so is the former.
\end{proof}

\begin{lemma}
   \label{1homeo}
   There is a natural homeomorphism $P_{1_{Y}}\cong PY$.
\end{lemma}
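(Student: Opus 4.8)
The plan is to exhibit the homeomorphism explicitly as the projection off the first coordinate and then verify naturality by a direct diagram chase; there is no substantive obstacle here, only a small amount of bookkeeping about the pullback topology.

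First I would unwind the definition. By~(\ref{Ppoints}) applied to $g=1_{Y}\colon\namedright{Y}{}{Y}$, the space $P_{1_{Y}}$ is the pullback of $1_{Y}$ and $ev_{0}\colon\namedright{PY}{}{Y}$, so as a set
\[P_{1_{Y}}=\{(y,\omega)\in Y\times PY\mid\omega(0)=y\},\]
topologised as a subspace of $Y\times PY$. Define $\phi\colon\namedright{P_{1_{Y}}}{}{PY}$ by $\phi(y,\omega)=\omega$, i.e.\ the restriction to $P_{1_{Y}}$ of the projection $Y\times PY\to PY$; it is continuous. Define $\psi\colon\namedright{PY}{}{P_{1_{Y}}}$ by $\psi(\omega)=(\omega(0),\omega)$. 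Its two coordinate functions are $ev_{0}$ and the identity on $PY$, both continuous, so $\psi$ is continuous as a map into $Y\times PY$, and its image lies in $P_{1_{Y}}$; since $P_{1_{Y}}$ carries the subspace topology, $\psi$ is continuous as a map into $P_{1_{Y}}$. One checks immediately that $\phi\circ\psi=1_{PY}$, and $\psi\circ\phi=1_{P_{1_{Y}}}$ because every point of $P_{1_{Y}}$ satisfies $\omega(0)=y$. Hence $\phi$ is a homeomorphism.

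For naturality in $Y$, let $s\colon\namedright{Y}{}{Y'}$ be a pointed map. Applying $s$ pointwise to paths gives the induced map $Ps\colon\namedright{PY}{}{PY'}$, which is well defined since $s$ is pointed, and $(y,\omega)\mapsto(s(y),Ps(\omega))$ defines a map $\namedright{P_{1_{Y}}}{}{P_{1_{Y'}}}$ because $Ps(\omega)(0)=s(\omega(0))=s(y)$. Chasing $(y,\omega)$ around the square
\[\diagram
   P_{1_{Y}}\rto\dto_{\phi} & P_{1_{Y'}}\dto^{\phi'} \\
   PY\rto^-{Ps} & PY'
\enddiagram\]
shows it commutes, since both composites send $(y,\omega)$ to $Ps(\omega)$. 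This establishes the asserted naturality.

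If anything is delicate it is only the continuity of $\psi$, which rests on the fact that $P_{1_{Y}}$ has the pullback (subspace) topology and that the evaluation $ev_{0}$ is continuous; with that in hand the lemma is immediate, and the same argument will also supply the compatibility of this homeomorphism with the product decomposition of Lemma~\ref{Pdecomp-old} that is needed to identify the top face of the cube in Lemma~\ref{Pcube}.
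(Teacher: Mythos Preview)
Your proof is correct and essentially identical to the paper's: both write down the same pair of maps (you have swapped the names $\phi$ and $\psi$ relative to the paper) and verify they are mutually inverse, with naturality following because both maps are natural. Your version is just slightly more explicit about continuity and the naturality square.
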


\begin{proof}
Taking $g=1_{Y}$ in~(\ref{Ppoints}) gives
\[P_{1_{Y}}=\{(y,\omega)\in Y\times PY\mid\omega(0)=y\}.\]
Define
\(\phi\colon\namedright{PY}{}{P_{1_{Y}}}\)
by $\phi(\omega)=(\omega(0),\omega)$ and
\(\psi\colon\namedright{P_{1_{Y}}}{}{PY}\)
by $\psi(y,\omega)=\omega$. Both $\phi$ and $\psi$ are continuous,
$\psi\circ\phi=id_{PY}$ and, because for any pair $(y,\omega)\in P_{1_{Y}}$
there is the condition $y=\omega(0)$, we also have $\phi\circ\psi=1_{P_{1_{Y}}}$.
Hence $\psi$ is a homeomorphism. As both $\phi$ and $\psi$ are natural,
the homeomorphism is too.
\end{proof}

Applying Lemmas~\ref{Pdecomp-old} and~\ref{1homeo} to the top face in
Lemma~\ref{Pcube}, the space $P_{f}$ is homeomorphic to the space $Q_{f}$
defined by the pushout
\begin{equation}
  \label{Ptransform1}
  \diagram
     P_{j}\times P_{i}\rto\dto & P_{j}\times PX\dto \\
     PC\times P_{i}\rto & Q_{f}.
  \enddiagram
\end{equation}
Moreover, the naturality statements in Lemmas~\ref{Pcube} through~\ref{1homeo}
imply that~(\ref{Ptransform1}) is natural for maps of diagrams of the
form~(\ref{cubedata}).

One further modification of~(\ref{Ptransform1}) is needed. If $Y$ is a pointed
space the reduced cone on $Y$ is the space $CY=Y\wedge I$ (i.e.,
$CY=(Y\times I)/(Y\vee I)$). If $Y$ and $Z$ are pointed spaces with
basepoints $y_{0}$ and~$z_{0}$ respectively, then the reduced join is defined
by $Y\ast Z=(Y\times I\times Z)/\sim$, where $(y,0,z)=(y,0,z')$,
$(y,1,z)\sim (y',1,z)$ and $(y_{0},t,z_{0})=(y_{0},0,z_{0})$ for all
$y,y'\in Y$, $z,z'\in Z$ and $t\in I$. Observe that there is a pushout
\[\diagram
        Y\times Z\rto\dto & Y\times CZ\dto \\
        CY\times Z\rto & Y\ast Z.
  \enddiagram\]

\begin{proposition}
   \label{Pdecomp}
   Up to homotopy equivalences, the top face in Lemma~\ref{Pcube} can
   be identified with the pushout
   \[\diagram
        P_{j}\times P_{i}\rto\dto & P_{j}\times CP_{i}\dto \\
        CP_{j}\times P_{i}\rto & P_{j}\ast P_{i}.
     \enddiagram\]
   In particular, $P_{f}$ is homotopy equivalent to $P_{j}\ast P_{i}$.
   Further, this homotopy equivalence may be chosen to be natural for maps
   of diagrams of the form~(\ref{cubedata}).
\end{proposition}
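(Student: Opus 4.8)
The plan is to start from the pushout defining $Q_{f}$ in~(\ref{Ptransform1}) and massage it, up to natural homotopy equivalence, into the join pushout. The key observation is that $P_{i}$ and $P_{j}$ are mapping path spaces of the \emph{inclusions} $i$ and $j$, so by the standard retraction onto the path space they are contractible: more precisely, $P_{i}$ deformation retracts onto $A$ (the domain of $i$), but what we actually need is the homotopy equivalences $PX\simeq\ast$ and $PC\simeq\ast$ together with the fact that $P_{i}\hookrightarrow PX$ and $P_{j}\hookrightarrow PC$ are cofibrations in a suitable sense. Rather than argue about cofibrancy directly, I would compare~(\ref{Ptransform1}) to the join pushout by building a map of pushout squares.

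First I would set up the map of squares. There is a strictly commutative cube whose front face is~(\ref{Ptransform1}) and whose back face is the join pushout
\[\diagram
   P_{j}\times P_{i}\rto\dto & P_{j}\times CP_{i}\dto \\
   CP_{j}\times P_{i}\rto & P_{j}\ast P_{i},
\enddiagram\]
with vertical maps the identity on $P_{j}\times P_{i}$, and the maps $P_{j}\times CP_{i}\to P_{j}\times PX$ and $CP_{j}\times P_{i}\to PC\times P_{i}$ induced by choosing a map $CP_{i}\to PX$ and $CP_{j}\to PC$ extending the respective inclusions. Such extensions exist because $PX$ and $PC$ are contractible: the inclusion $P_{i}\hookrightarrow PX$ is null-homotopic (indeed $PX$ is contractible), so it extends over the cone $CP_{i}$, and similarly for $j$. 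Moreover these extensions can be chosen naturally by using an explicit null-homotopy of $PX$ — for instance the standard contraction of the path space — so that the induced map of diagrams is natural for maps of the form~(\ref{cubedata}).

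Next I would check that the two non-trivial vertical maps $CP_{i}\to PX$ and $CP_{j}\to PC$ are homotopy equivalences. Both source and target are contractible: $CP_{i}$ is a cone, hence contractible, and $PX$ is the path space of $X$, hence contractible; similarly $CP_{j}$ and $PC$ are contractible. So the vertical maps in the cube, namely $1\colon P_{j}\times P_{i}\to P_{j}\times P_{i}$, the map $P_{j}\times CP_{i}\to P_{j}\times PX$ (a product of an identity with a homotopy equivalence), and $CP_{j}\times P_{i}\to PC\times P_{i}$, are all homotopy equivalences. Finally I would invoke the gluing lemma for homotopy pushouts: a map of pushout squares which is a homotopy equivalence on each of the three defining corners induces a homotopy equivalence on the pushouts, provided the relevant maps are cofibrations — here $P_{j}\times P_{i}\to P_{j}\times CP_{i}$ and $P_{j}\times P_{i}\to CP_{j}\times P_{i}$ are cofibrations since they are inclusions of the form $Z\times\{0\}\hookrightarrow Z\times I$ smashed appropriately, and likewise on the mapping-path-space side the relevant maps are cofibrations because $P_{i}\hookrightarrow PX$ arises as a pullback of a cofibration (alternatively, replace $PX$ by the homotopy-equivalent larger path space if needed). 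This yields $Q_{f}\simeq P_{j}\ast P_{i}$, and combined with $P_{f}\cong Q_{f}$ from Lemmas~\ref{Pdecomp-old} and~\ref{1homeo} gives $P_{f}\simeq P_{j}\ast P_{i}$. The naturality is inherited from the naturality of all the intermediate constructions.

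The main obstacle I anticipate is the cofibration bookkeeping needed to apply the gluing lemma, and more importantly keeping everything \emph{natural}. The cube in the first step is strictly commutative by construction, but the extensions $CP_{i}\to PX$ and $CP_{j}\to PC$ are only well-defined up to homotopy, so I must pin down specific natural choices — this is exactly why the paper earlier went to the trouble of constructing $\widetilde{P}_{g}$ and $P_{g}$ functorially. I would use the canonical contraction of the path space $PX$ (scaling paths toward the constant path) to write down an explicit formula for $CP_{i}=P_{i}\wedge I\to PX$, and verify directly that this formula commutes with the maps induced by $s\times t$ in~(\ref{cubedata}). Once the natural cube is in hand, the homotopy-equivalence and gluing-lemma steps are routine, so the real work is in the explicit, natural construction of the comparison cube.
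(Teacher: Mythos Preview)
Your approach is essentially the same as the paper's: construct natural maps $CP_{i}\to PX$ and $CP_{j}\to PC$ from the canonical contraction of path spaces, observe these are homotopy equivalences between contractible spaces, and conclude that the two pushout squares have homotopy equivalent pushouts. The paper is terser---it simply asserts that replacing $PX$ by $CP_{i}$ and $PC$ by $CP_{j}$ yields a homotopy equivalent pushout---whereas you explicitly invoke the gluing lemma and worry about cofibrancy, but the underlying argument is identical.

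One minor slip to correct: in your opening paragraph you say $P_{i}$ and $P_{j}$ are contractible because $i,j$ are inclusions. This is false in general---$P_{i}$ is the homotopy fibre of $i\colon A\hookrightarrow X$ and has the homotopy type of $\Omega(X/A)$ when the inclusion is a cofibration, which is typically nontrivial. You immediately recover by noting that what is actually needed is the contractibility of $PX$ and $PC$, and the rest of the argument proceeds correctly; just delete the misleading opening remark. Your concern about whether $P_{i}\hookrightarrow PX$ is a cofibration (needed for the gluing lemma on the $Q_{f}$ side) is legitimate and is glossed over in the paper as well; one clean fix is to note that the map going the \emph{other} way in each square, namely $P_{j}\times P_{i}\to CP_{j}\times P_{i}$, is already a cofibration, so the join square is a homotopy pushout, and then argue that the comparison map to~(\ref{Ptransform1}) is a weak equivalence on homotopy pushouts.
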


\begin{proof}
In general, suppose that $Z$ is contractible. Then there is a pointed homotopy
\(\namedright{Z\times I}{}{Z}\)
which at $t=0$ is the identity map on $Z$ and at $t=1$ is the constant map
to the basepoint. The homotopy sends $Z\vee I$ to the basepoint, and so
factors through a map
\(\namedright{CZ=Z\wedge I}{}{Z}\).
That is, the contracting homotopy for $Z$ determines a specific map
\(\namedright{CZ}{}{Z}\).
If the contracting homotopy is natural for maps
\(\namedright{Z}{}{Z'}\),
then the map
\(\namedright{CZ}{}{Z}\)
is also natural. In fact, it is a natural homotopy equivalence. Refining, if
\(g\colon\namedright{Y}{}{Z}\)
is a pointed map with $Z$ being contractible, then we obtain a composite
\(\nameddright{CY}{Cg}{CZ}{}{Z}\)
with the same naturality properties.

In our case, consider~(\ref{Ptransform1}). Since $PC$ and $PX$ are contractible,
we obtain composites
\(\nameddright{P_{j}}{}{CP_{j}}{}{PC}\)
and
\(\nameddright{P_{i}}{}{CP_{i}}{}{PX}\)
in which the right hand maps are homotopy equivalences. Thus the
pushout $Q_{f}$ in~(\ref{Ptransform1}) is homotopy equivalent to the space
$P_{j}\ast P_{i}$ obtained from the pushout
\[\diagram
     P_{j}\times P_{i}\rto\dto & P_{j}\times CP_{i}\dto \\
     CP_{j}\times P_{i}\rto & P_{j}\ast P_{i}.
  \enddiagram\]
Since $P_{f}$ is homeomorphic to $Q_{f}$, we obtain $P_{f}\simeq P_{j}\ast P_{i}$.
Further, since the contracting homotopy for a path space $PZ$ can be chosen
to be natural for any map
\(\namedright{Z}{}{Z'}\),
this homotopy equivalence for $P_{f}$ is natural to the same extent
as~(\ref{Ptransform1}) is natural. That is, it is natural for maps of diagrams
of the form~(\ref{cubedata}).
\end{proof}

\subsection{Two general results on fibrations} 
Now assume that all spaces have the homotopy type of pointed $CW$-complexes. 
If $X$ is such a space then by~\cite[Corollary 3]{Mi} so is $\Omega X$. Also, any weak homotopy 
equivalence between two such spaces is a homotopy equivalence (see, for example, 
\cite[Ch.~7, \S 6, Corollary 24]{Sp}.   

\begin{lemma} 
   \label{rightinverse} 
   Suppose that 
   \(\namedddright{\Omega B}{\partial}{F}{f}{E}{p}{B}\) 
   is a homotopy fibration sequence and $p$ has a left homotopy inverse. 
   Then $\partial$ has a right homotopy inverse.  
\end{lemma}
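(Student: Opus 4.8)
The plan is to deduce the result from the single observation that the map $f$ in the fibration sequence is null-homotopic; once that is in hand, the existence of a right homotopy inverse for $\partial$ is immediate from the universal property of the homotopy fibre.

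First I would record the standard fact that in any homotopy fibration sequence the composite of two consecutive maps is null-homotopic; applied here it gives $p\circ f\simeq\ast$. Let \(\namedright{B}{s}{E}\) be a left homotopy inverse of $p$, so $s\circ p\simeq 1_{E}$. Then
\[f\simeq(s\circ p)\circ f=s\circ(p\circ f)\simeq s\circ\ast=\ast,\]
so $f$ is null-homotopic.

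Next, since \(\nameddright{\Omega B}{\partial}{F}{f}{E}\) is a homotopy fibration sequence, $\Omega B$ is the homotopy fibre of $f$ and $\partial$ is the corresponding fibre inclusion. By the universal property of the homotopy fibre, any map into $F$ whose composite with $f$ is null-homotopic lifts through $\partial$. Applying this to the identity map \(\namedright{F}{1_{F}}{F}\), whose composite with $f$ is null-homotopic by the previous paragraph, yields a map \(\namedright{F}{\ell}{\Omega B}\) with $\partial\circ\ell\simeq 1_{F}$; that is, $\partial$ has a right homotopy inverse. (Equivalently, since $f$ is null-homotopic its homotopy fibre is homotopy equivalent to $F\times\Omega E$ with fibre inclusion corresponding to the projection onto $F$, and one may take $\ell$ to be the inclusion of $F$ as the first factor.)

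The only point that is not purely formal is the identification of $\partial$ with the fibre inclusion of $f$: this uses that the given sequence is a genuine homotopy fibration sequence, and the standing assumption that all spaces have the homotopy type of pointed $CW$-complexes ensures that the relevant comparison maps are homotopy equivalences rather than merely weak equivalences. Otherwise the argument is short and formal, so I do not anticipate a serious obstacle.
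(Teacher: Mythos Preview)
Your proof is correct and is essentially the same as the paper's: both first show $f\simeq\ast$ via $f\simeq s\circ p\circ f\simeq\ast$, and then lift $1_{F}$ through $\partial$ using the exactness of $[X,\Omega B]\to[X,F]\to[X,E]$ at $[X,F]$ with $X=F$ --- your ``universal property of the homotopy fibre'' is precisely this exactness statement. The paper phrases the second step in terms of exact sequences of pointed sets rather than a lifting property, but the content is identical.
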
 

\begin{proof} 
Let 
\(s\colon\namedright{B}{}{E}\) 
be a map such that $s\circ p$ is homotopic to the identity map on $E$. 
Then $f\simeq s\circ p\circ f$, implying that $f$ is null homotopic since 
$p\circ f$ is. If $X$ is any pointed space then the homotopy fibration 
\(\nameddright{\Omega B}{\partial}{F}{f}{E}\) 
induces an exact sequence of pointed sets 
\(\nameddright{[X,\Omega B]}{\partial_{\ast}}{[X,F]}{f_{\ast}}{[X,B]}\) 
where $[X,Y]$ is the set of pointed homotopy classes of maps from $X$ to $Y$. 
Since $f$ is null homotopic, $f_{\ast}=0$, so $\partial_{\ast}$ is onto. 
Taking $X=F$ implies that the (homotopy class of the) identity map on $F$ 
lifts through $\partial_{\ast}$ to a map 
\(t\colon\namedright{F}{}{\Omega B}\). 
That is, $\partial\circ t$ is homotopic to the identity map on $F$.  
\end{proof}  

In general, if 
\(\nameddright{F}{f}{E}{p}{B}\) 
is a homotopy fibration where $E$ is an $H$-space and $p$ has a 
right homotopy inverse 
\(s\colon\namedright{B}{}{E}\), 
then the composite 
\[\nameddright{B\times F}{s\times f}{E\times E}{\mu}{E}\] 
is a weak homotopy equivalence, and hence a homotopy equivalence.  
We wish to give a slight variation on this in the case when $B=B_{1}\times B_{2}$ 
and each factor has a right homotopy inverse. For $i=1,2$ let $p_{i}$ be the composite 
\(p_{i}\colon\nameddright{E}{p}{B_{1}\times B_{2}}{\pi_{i}}{B_{i}}\) 
where~$\pi_{i}$ is the projection. As maps into a product are determined 
by their projection onto each factor, we have $p=(p_{1},p_{2})$.  

\begin{lemma} 
   \label{2section} 
   Let 
   \(\nameddright{F}{f}{E}{p}{B_{1}\times B_{2}}\) 
   be a homotopy fibration where $p$ is an $H$-map. Suppose that for $i=1,2$ there are maps 
   \(s_{i}\colon\namedright{B_{i}}{}{E}\) 
   such that $p_{i}\circ s_{i}$ is homotopic to the identity map on~$B_{i}$, 
   and $p_{i}\circ s_{j}$ is null homotopic for $i\neq j$. Then the composite 
   \[\llnameddright{B_{1}\times B_{2}\times F}{s_{1}\times s_{2}\times f}{E\times E\times E} 
         {\mu\circ(\mu\times 1)}{E}\] 
   is a homotopy equivalence, where $\mu$ is the multiplication on $E$. 
\end{lemma}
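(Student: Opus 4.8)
The plan is to deduce the lemma from the one-factor statement recorded in the paragraph preceding it: if $\nameddright{F}{f}{E}{p}{B}$ is a homotopy fibration with $E$ an $H$-space and $p$ admitting a right homotopy inverse $\namedright{B}{s}{E}$, then $\mu\circ(s\times f)\colon\namedright{B\times F}{}{E}$ is a homotopy equivalence. I would apply this with $B=B_{1}\times B_{2}$, so the only thing to supply is a right homotopy inverse of $p$ built from $s_{1}$ and $s_{2}$. The natural choice is the composite
\[s\colon\nameddright{B_{1}\times B_{2}}{s_{1}\times s_{2}}{E\times E}{\mu}{E},\]
and the substance of the argument is checking that $p\circ s\simeq 1_{B_{1}\times B_{2}}$.

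To verify this, I would use that $p$ is an $H$-map. Equip $B_{1}\times B_{2}$ with its standard product $H$-structure $\mu_{B}=\mu_{B_{1}}\times\mu_{B_{2}}$, for which both projections are $H$-maps, so that there is a homotopy $p\circ\mu\simeq\mu_{B}\circ(p\times p)$. Then
\[p\circ s=p\circ\mu\circ(s_{1}\times s_{2})\simeq\mu_{B}\circ\bigl((p\circ s_{1})\times(p\circ s_{2})\bigr).\]
Since $p=(p_{1},p_{2})$ and, by hypothesis, $p_{1}\circ s_{1}\simeq 1_{B_{1}}$ and $p_{2}\circ s_{1}$ is null homotopic, the map $p\circ s_{1}$ is homotopic to the inclusion $\namedright{B_{1}}{}{B_{1}\times B_{2}}$ of the first factor; similarly $p\circ s_{2}$ is homotopic to the inclusion of the second factor. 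Substituting these in and using the unit homotopies for $\mu_{B_{1}}$ and $\mu_{B_{2}}$ gives $p\circ s\simeq 1_{B_{1}\times B_{2}}$, so $s$ is a right homotopy inverse of $p$.

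Now the one-factor statement applies and shows that $\mu\circ(s\times f)\colon\namedright{(B_{1}\times B_{2})\times F}{}{E}$ is a homotopy equivalence. It remains only to note that this is the composite in the statement: unwinding the definition of $s$, the map $\mu\circ(s\times f)$ sends $(b_{1},b_{2},x)$ to $\mu(\mu(s_{1}b_{1},s_{2}b_{2}),f(x))$, which is exactly the value of $\mu\circ(\mu\times 1)\circ(s_{1}\times s_{2}\times f)$ at $(b_{1},b_{2},x)$. I do not anticipate a real obstacle here; the one point needing care is the bookkeeping around the $H$-structure on $B_{1}\times B_{2}$ — one should be explicit that it is the product structure so that ``$p$ is an $H$-map'' yields the compatibility $p\circ\mu\simeq(\mu_{B_{1}}\times\mu_{B_{2}})\circ(p\times p)$ used in the second step. (Alternatively one could bypass the one-factor statement and argue directly, comparing the homotopy fibration $\nameddright{F}{f}{E}{p}{B_{1}\times B_{2}}$ with the trivial fibration $\namedright{(B_{1}\times B_{2})\times F}{}{B_{1}\times B_{2}}$ along $\mu\circ(\mu\times 1)\circ(s_{1}\times s_{2}\times f)$: the computation $p\circ s\simeq\mathrm{id}$ shows this covers the identity up to homotopy and it restricts to $f$ on fibres, so the five lemma and Whitehead's theorem finish the proof.)
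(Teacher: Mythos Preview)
Your proposal is correct and follows essentially the same approach as the paper: both reduce to showing that $\mu\circ(s_{1}\times s_{2})$ is a right homotopy inverse for $p$, verify this via the $H$-map property $p\circ\mu\simeq\mu_{B}\circ(p\times p)$ together with the hypotheses identifying $p\circ s_{i}$ with the inclusion $i_{i}\colon B_{i}\hookrightarrow B_{1}\times B_{2}$, and then invoke the one-factor statement recorded just before the lemma. The paper packages the verification into a single commutative diagram, but the content is identical to what you wrote.
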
 

\begin{proof} 
From the general result stated before the lemma, it suffices to show that 
$s_{1}\times s_{2}$ is a right homotopy inverse for $p$. Consider the diagram  
\[\diagram 
    B_{1}\times B_{2}\rto^-{s_{1}\times s_{2}}\drto_(0.4){i_{1}\times i_{2}}  
         & E\times E\rto^-{\mu}\dto^{p\times p} & E\dto^{p} \\ 
     & (B_{1}\times B_{2})\times (B_{1}\times B_{2})\rto^-{\mu'} & B_{1}\times B_{2}  
 \enddiagram\]  
where $i_{1}$ and $i_{2}$ are the inclusions into the first and second factors 
respectively and $\mu'$ is the multiplication on $B_{1}\times B_{2}$. The left triangle 
homotopy commutes since $p_{i}\circ s_{i}$ is homotopic to the identity map on 
$B_{i}$ and $p_{i}\circ s_{j}\simeq\ast$ if $i\neq j$. The right square homotopy 
commutes since $p$ is an $H$-map. Observe that the lower direction around the 
diagram is homotopic to the identity map on $B_{1}\times B_{2}$. Therefore the 
upper direction around the diagram implies that $\mu\circ(s_{1}\times s_{2})$ is 
a right homotopy inverse for $p$.  
\end{proof}

\section{Polyhedral products and the proof of Theorem~\ref{main}}

Let $K$ be a simplicial complex on the set $[m]$ and let $v$ be a
vertex of $K$. Following F\'{e}lix and Tanr\'{e}~\cite{FT}, define a new
simplicial complex $\overline{K}$ on $[m]$ by
\[\overline{K}=K\backslash\{v\}\ast\{v\}.\]
Observe that there is an inclusion of simplicial complexes
\(\namedright{K\backslash\{v\}}{}{\overline{K}}\)
given by including the join factor, so as $\starr_{K}(v)=\link_{K}(v)\ast\{v\}$,
there is a pushout map
\[\namedright{K}{}{\overline{K}}.\]
Observe also that $K\backslash\{v\}$ is the full subcomplex of
$\overline{K}$. That is, $\overline{K}\backslash\{v\}=K\backslash\{v\}$.

By~\cite{GT2}, the pushout of simplicial complexes in~(\ref{Kpoagain})
induces a pushout of polyhedral products
\begin{equation}
  \label{FTprelim}
  \diagram
         \uxa^{\link_{K}(v)}\times A_{v}\rto^-{1\times i_{v}}\dto^{j\times 1}
             & \uxa^{\link_{K}(v)}\times X_{v}\dto \\
         \uxa^{K\backslash\{v\}}\times A_{v}\rto & \uxa^{K}
   \enddiagram
\end{equation}
where $i_{v}$ is the inclusion. (Here we regard $\link_K(v)$
and $K\setminus\{v\}$ as simplicial complexes on the set $[m]\setminus\{v\}$.)
To relate this to $\uxa^{\overline{K}}$,
observe that the definition of the join of two simplicial complexes implies
that if $K=K_{1}\ast K_{2}$ then there is a homeomorphism
\[\uxa^{K}\cong\uxa^{K_{1}}\times\uxa^{K_{2}}.\]
In particular, as $\overline{K}=K\backslash\{v\}\ast\{v\}$ there is a homeomorphism
\[\uxa^{\overline{K}}\cong\uxa^{K\backslash\{v\}}\times X_{v}\]
and a strictly commutative diagram
\begin{equation}
  \label{polycubedata}
  \diagram
         \uxa^{\link_{K}(v)}\times A_{v}\rto^-{1\times i_{v}}\dto^{j\times 1}
             & \uxa^{\link_{K}(v)}\times X_{v}\dto\ddrto^-{j\times 1}  & \\
         \uxa^{K\backslash\{v\}}\times A_{v}\rto\drrto_{1\times i_{v}}
         & \uxa^{K}\drto^-(0.4){f} & \\
         & & \uxa^{K\backslash\{v\}}\times X_{v}
   \enddiagram
\end{equation}
where $f$ is the map induced by the simplicial map
\(\namedright{K}{}{\overline{K}}\)
and all maps are inclusions of subspaces.

Let $B^{K}_{v}$ be the fibre $P_{j}$ obtained by turning the map
\(\namedright{\uxa^{\link_{K}(v)}}{j}{\uxa^{K\backslash\{v\}}}\)
into a fibration and let $Y_{v}$ be the fibre $P_{i_{v}}$ obtained by turning
the inclusion
\(\namedright{A_{v}}{i_{v}}{X_{v}}\)
into a fibration.

\begin{lemma}
   \label{Ftype}
   If $F^{K}_{v}$ is the fibre $P_{f}$ obtained by turning the map
   \(\namedright{\uxa^{K}}{f}{\uxa^{K\backslash\{v\}}\times X_{v}}\)
   into a fibration, then there is a homotopy equivalence
   \[F^{K}_{v}\simeq B^{K}_{v}\ast Y_{v}.\]
   Further, this homotopy equivalence is natural for inclusions of simplicial complexes
   \(\namedright{K}{}{K'}\)
   on the set $[m]$.
\end{lemma}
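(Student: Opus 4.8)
The plan is to apply the machinery built up in Section~\ref{sec:htpy} directly to the strictly commutative diagram~(\ref{polycubedata}), which by construction is exactly an instance of the diagram~(\ref{cubedata}) with the substitutions $B=\uxa^{\link_{K}(v)}$, $A=A_{v}$, $X=X_{v}$, $C=\uxa^{K\backslash\{v\}}$, $i=i_{v}$, and $j$ the inclusion \(\namedright{\uxa^{\link_{K}(v)}}{}{\uxa^{K\backslash\{v\}}}\). That the square in~(\ref{polycubedata}) is a pushout is precisely what~(\ref{FTprelim}) records (using the result of~\cite{GT2} that the pushout~(\ref{Kpoagain}) of simplicial complexes induces a pushout of polyhedral products), and the homeomorphism \(\uxa^{\overline{K}}\cong\uxa^{K\backslash\{v\}}\times X_{v}\) identifies the target of the pushout map with the lower-right corner $C\times X$.

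First I would invoke Proposition~\ref{Pdecomp}: applied to~(\ref{polycubedata}), it says that the fibre $P_{f}$ of the map \(f\colon\namedright{\uxa^{K}}{}{\uxa^{K\backslash\{v\}}\times X_{v}}\) is homotopy equivalent to $P_{j}\ast P_{i_{v}}$. By the definitions given just before the lemma, $P_{j}=B^{K}_{v}$ and $P_{i_{v}}=Y_{v}$, so this immediately yields the homotopy equivalence $F^{K}_{v}\simeq B^{K}_{v}\ast Y_{v}$ claimed. Since Proposition~\ref{Pdecomp} also requires identifying $P_{f}$ with the actual fibre $F^{K}_{v}$ obtained by turning $f$ into a fibration, I would note that $P_{f}$ as constructed in Section~\ref{sec:htpy} (the mapping path space) is by definition a model for that fibre; one may pass between the mapping path space $P_{f}$ and any other fibration replacement of $f$ via the standard natural homotopy equivalence, and under the CW hypotheses in force this is a genuine homotopy equivalence.

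For the naturality statement, the point is that an inclusion of simplicial complexes \(\namedright{K}{}{K'}\) on $[m]$ induces a compatible inclusion of all the relevant subcomplexes --- $\link_{K}(v)\hookrightarrow\link_{K'}(v)$, $K\setminus\{v\}\hookrightarrow K'\setminus\{v\}$, and $\overline{K}\hookrightarrow\overline{K'}$ --- hence a map between the corresponding diagrams of the shape~(\ref{polycubedata}): this is a map of diagrams of the form~(\ref{cubedata}) in the precise sense demanded by Proposition~\ref{Pdecomp}. (The functoriality of polyhedral products in the simplicial complex variable, again from~\cite{GT2}, is what makes each face of this map of diagrams commute, and the homeomorphism $\uxa^{K_1\ast K_2}\cong\uxa^{K_1}\times\uxa^{K_2}$ is natural in each factor.) Proposition~\ref{Pdecomp} asserts that its identification of the top face --- and in particular the equivalence $P_{f}\simeq P_{j}\ast P_{i_{v}}$ --- is natural for maps of diagrams of the form~(\ref{cubedata}), and this naturality transports verbatim to the statement that $F^{K}_{v}\simeq B^{K}_{v}\ast Y_{v}$ is natural for \(\namedright{K}{}{K'}\).

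I do not expect a serious obstacle here: the lemma is essentially a repackaging of Proposition~\ref{Pdecomp} in the polyhedral-product setting, and the only things to be careful about are (i) verifying that~(\ref{polycubedata}) genuinely has the form~(\ref{cubedata}) with the square a pushout --- which is handled by~(\ref{FTprelim}) and~\cite{GT2} --- and (ii) checking that the comparison map induced by \(\namedright{K}{}{K'}\) is a morphism of such diagrams so that the naturality clause of Proposition~\ref{Pdecomp} applies. The mildest technical point is keeping track of the fact that $\link_K(v)$ and $K\setminus\{v\}$ are taken as complexes on $[m]\setminus\{v\}$ (possibly with ghost vertices), so that the polyhedral products in~(\ref{polycubedata}) are formed over the pairs indexed by $[m]\setminus\{v\}$ and the extra factor $X_{v}$ (resp.\ $A_{v}$) is split off correctly; this is exactly the bookkeeping already set up in the paragraph preceding the lemma, so it requires no new argument.
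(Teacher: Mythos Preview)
Your proposal is correct and follows essentially the same approach as the paper's own proof: apply Proposition~\ref{Pdecomp} directly to the diagram~(\ref{polycubedata}) to obtain $F^{K}_{v}\simeq B^{K}_{v}\ast Y_{v}$, and then observe that a simplicial inclusion \(\namedright{K}{}{K'}\) on $[m]$ induces a map of diagrams of the form~(\ref{cubedata}), so the naturality clause of Proposition~\ref{Pdecomp} applies. The paper's proof is simply a terser version of what you wrote; your additional remarks about verifying the pushout property via~(\ref{FTprelim}), identifying $P_{j}$ and $P_{i_{v}}$ with $B^{K}_{v}$ and $Y_{v}$, and the ghost-vertex bookkeeping are all accurate elaborations.
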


\begin{proof}
Proposition~\ref{Pdecomp} immediately implies the asserted homotopy
equivalence for $F^{K}_{v}$ and states that it is natural for maps of diagrams
of the form~(\ref{polycubedata}). Now observe that any inclusion of simplicial complexes
\(\namedright{K}{}{K'}\)
on the vertex set $[m]$ induces such a map of diagrams.
\end{proof}

To take this further we need a general result about polyhedral products.

\begin{lemma}
   \label{fullsubcomplex}
   Suppose that $K_\omega$ is a full subcomplex of a simplicial complex $K$.
   Then the map of polyhedral products
   \(\namedright{\uxa^{K_\omega}}{}{\uxa^{K}}\)
   induced by the simplicial inclusion
   \(\namedright{K_\omega}{}{K}\)
   has a left inverse, that is, there is a retraction
   \(\namedright{\uxa^{K}}{}{\uxa^{K_\omega}}\). 
   Further, the construction of the left inverse is natural for simplicial inclusions  
   \(\namedright{K}{}{K'}\).  
\end{lemma}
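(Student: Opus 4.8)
The plan is to exhibit the retraction explicitly using the projection maps coming from the product structure. Since $K_\omega$ is the full subcomplex of $K$ on the vertex subset $\omega\subseteq[m]$, write (after reordering) $[m]=\omega\sqcup\omega^c$ and consider the projection $\pi\colon\prod_{i=1}^m X_i\to\prod_{i\in\omega}X_i$ that forgets the coordinates indexed by $\omega^c$. First I would check that $\pi$ carries $\uxa^K$ into $\uxa^{K_\omega}$: for a simplex $\sigma\in K$, the subproduct $\uxa^\sigma$ maps under $\pi$ into $\prod_{i\in\omega}Y_i$ with $Y_i=X_i$ if $i\in\sigma\cap\omega$ and $Y_i=A_i$ otherwise; this is exactly $\uxa^{\sigma\cap\omega}$ for the pairs indexed by $\omega$, and $\sigma\cap\omega$ is a simplex of $K_\omega$ since $K_\omega$ is the full subcomplex (indeed $\sigma\cap\omega\subseteq\sigma\in K$ and $\sigma\cap\omega\subseteq\omega$). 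Hence $\pi$ restricts to a map $r\colon\uxa^K\to\uxa^{K_\omega}$.

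Next I would verify that $r$ is a left inverse to the inclusion $\iota\colon\uxa^{K_\omega}\to\uxa^K$ induced by $K_\omega\hookrightarrow K$. This inclusion is realised concretely: a point of $\uxa^{K_\omega}$, living in $\prod_{i\in\omega}X_i$, is sent into $\prod_{i=1}^m X_i$ by inserting the basepoint of $A_i\subseteq X_i$ in each coordinate $i\in\omega^c$; the image lands in $\uxa^K$ because any $\tau\in K_\omega$ is also a simplex of $K$. Composing, $r\circ\iota$ is the identity on $\prod_{i\in\omega}X_i$ restricted to $\uxa^{K_\omega}$, since $\pi$ simply deletes the basepoint coordinates that $\iota$ inserted. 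This gives the desired retraction.

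Finally, for the naturality statement, let $K\hookrightarrow K'$ be a simplicial inclusion on $[m]$; then $K_\omega\hookrightarrow K'_\omega$ is also an inclusion of full subcomplexes, and the retractions $r_K\colon\uxa^K\to\uxa^{K_\omega}$ and $r_{K'}\colon\uxa^{K'}\to\uxa^{K'_\omega}$ are both restrictions of the \emph{same} coordinate projection $\pi\colon\prod_{i=1}^m X_i\to\prod_{i\in\omega}X_i$. Hence the square with horizontal maps the polyhedral-product maps induced by $K\hookrightarrow K'$ and $K_\omega\hookrightarrow K'_\omega$, and vertical maps $r_K,r_{K'}$, commutes on the nose, which is the asserted naturality.

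I do not anticipate a serious obstacle here: the only points requiring care are the bookkeeping that $\sigma\cap\omega\in K_\omega$ (which is precisely the fullness hypothesis, and fails without it) and the check that the relevant maps land in the correct polyhedral products rather than merely in the ambient product. Everything else is the observation that $r$ and $\iota$ are built from a single coordinate projection and a single basepoint-insertion, so all compatibilities are strict.
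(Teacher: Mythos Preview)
Your proof is correct and follows essentially the same approach as the paper: both construct the retraction by restricting the coordinate projection $\prod_{i\in[m]}X_i\to\prod_{i\in\omega}X_i$ to $\uxa^K$, use fullness of $K_\omega$ to ensure $\sigma\cap\omega\in K_\omega$ so the image lands in $\uxa^{K_\omega}$, and derive naturality from the fact that the retraction is a restriction of a fixed projection on the ambient products. The paper phrases this as patching together the restrictions $r_\sigma$ of this projection to each piece $\uxa^\sigma$, but the resulting map is identical to your~$r$.
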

\begin{proof}
We have
\[
  \uxa^{K}=\bigcup_{\sigma\in\mathcal K}
  \Bigl(\prod_{i\in \sigma}X_i\times\prod_{i\in[m]\setminus\sigma}A_i\Bigl),\quad
  \uxa^{K_\omega}=\bigcup_{\sigma\in\mathcal K,\,\sigma\subseteq\omega}
  \Bigl(\prod_{i\in \sigma}X_i\times\prod_{i\in \omega\setminus\sigma}A_i\Bigl).
\]
Since each $A_i$ is a pointed space, there is a canonical
inclusion \(\namedright{\uxa^{K_\omega}}{}{\uxa^{K}}\).
Furthermore, for each $\sigma\in K$ there is a projection
\[
  r_\sigma\colon \prod_{i\in \sigma}X_i\times
  \prod_{i\in[m]\setminus \sigma}A_i\longrightarrow
  \prod_{i\in \sigma\cap\omega}X_i\times\prod_{i\in \omega\setminus \sigma}A_i.
\]
Since $K_\omega$ is a full subcomplex, the image of $r_\sigma$
belongs to $\uxa^{K_\omega}$. The projections $r_\sigma$ patch
together to give a retraction $r=\bigcup_{\sigma\in
K}r_\sigma\colon\namedright{\uxa^{K}}{}{\uxa^{K_\omega}}$. 
The naturality assertion follows from the naturality of inclusions and projections.  
\end{proof}

\begin{proposition}
   \label{uxasplit}
   Let $K$ be a simplicial complex on the index set $[m]$ and let $v$ be a vertex
   of $K$. Then there is a homotopy equivalence
   \[\Omega\uxa^{K}\simeq\Omega X_{v}\times\Omega\uxa^{K\backslash\{v\}}
           \times\Omega(B^{K}_{v}\ast Y_{v})\]
   which is natural for inclusions of simplicial complexes
   \(\namedright{K}{}{K'}\)
   on the set $[m]$. 
\end{proposition}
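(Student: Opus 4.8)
The plan is to combine the fibration from~(\ref{polycubedata}) with the structural lemmas already established. The map $f\colon\uxa^{K}\to\uxa^{K\backslash\{v\}}\times X_{v}$ admits a homotopy fibration
\[\nameddright{F^{K}_{v}}{}{\uxa^{K}}{f}{\uxa^{K\backslash\{v\}}\times X_{v}}\]
with $F^{K}_{v}\simeq B^{K}_{v}\ast Y_{v}$ by Lemma~\ref{Ftype}. The key point is that $f$ has a right homotopy inverse, and in fact splits in a way that separates the two factors of the base. First I would observe that the composite
\(\namedright{\uxa^{K\backslash\{v\}}}{}{\uxa^{K}}{f}{\uxa^{K\backslash\{v\}}\times X_{v}}\)
is the inclusion onto the first factor: the simplicial inclusion $K\backslash\{v\}\to\overline{K}=K\backslash\{v\}\ast\{v\}$ corresponds under the join homeomorphism to $\uxa^{K\backslash\{v\}}\times\{\ast\}\hookrightarrow\uxa^{K\backslash\{v\}}\times X_{v}$, and this factors through $\uxa^{K}$. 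Since $K\backslash\{v\}=K_{[m]\setminus\{v\}}$ is a full subcomplex of $K$, Lemma~\ref{fullsubcomplex} gives a retraction $\uxa^{K}\to\uxa^{K\backslash\{v\}}$; call the inclusion $\iota_{1}\colon\uxa^{K\backslash\{v\}}\to\uxa^{K}$. Similarly, $\{v\}$ is a full subcomplex of $K$ (a single vertex), so there is an inclusion $\iota_{2}\colon X_{v}=\uxa^{\{v\}}\to\uxa^{K}$ with a retraction; and composing $\iota_{2}$ with $f$ gives the inclusion of the second factor $X_{v}\hookrightarrow\uxa^{K\backslash\{v\}}\times X_{v}$.

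Now loop everything. The fibration above loops to a homotopy fibration
\(\nameddright{\Omega F^{K}_{v}}{}{\Omega\uxa^{K}}{\Omega f}{\Omega\uxa^{K\backslash\{v\}}\times\Omega X_{v}}\),
and $E=\Omega\uxa^{K}$ is an $H$-space with $\Omega f$ an $H$-map. Setting $B_{1}=\Omega\uxa^{K\backslash\{v\}}$, $B_{2}=\Omega X_{v}$, $s_{1}=\Omega\iota_{1}$ and $s_{2}=\Omega\iota_{2}$, the observations above show that $(\Omega f)_{i}\circ s_{i}\simeq\mathrm{id}$ and $(\Omega f)_{i}\circ s_{j}\simeq\ast$ for $i\neq j$, since $\pi_{1}\circ f\circ\iota_{1}$ is the identity, $\pi_{2}\circ f\circ\iota_{1}$ is null homotopic (it is the constant map to the basepoint of $X_{v}$), and symmetrically for $\iota_{2}$. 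Lemma~\ref{2section} then yields a homotopy equivalence
\[\llnameddright{\Omega\uxa^{K\backslash\{v\}}\times\Omega X_{v}\times\Omega F^{K}_{v}}{}{\Omega\uxa^{K}\times\Omega\uxa^{K}\times\Omega\uxa^{K}}{\mu\circ(\mu\times 1)}{\Omega\uxa^{K}},\]
and substituting $F^{K}_{v}\simeq B^{K}_{v}\ast Y_{v}$ from Lemma~\ref{Ftype} gives the asserted decomposition
\[\Omega\uxa^{K}\simeq\Omega X_{v}\times\Omega\uxa^{K\backslash\{v\}}\times\Omega(B^{K}_{v}\ast Y_{v}).\]

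For naturality, I would track the construction through an inclusion $\namedright{K}{}{K'}$ of simplicial complexes on $[m]$. The fibration and the equivalence $F^{K}_{v}\simeq B^{K}_{v}\ast Y_{v}$ are natural by Lemma~\ref{Ftype}; the inclusions $\iota_{1},\iota_{2}$ and their retractions are natural by the naturality clause in Lemma~\ref{fullsubcomplex}; and the homotopy equivalence produced by Lemma~\ref{2section} is built functorially out of $s_{1}$, $s_{2}$, $f$ and the $H$-space multiplication, all of which are compatible with the map induced by $\namedright{K}{}{K'}$. Hence the whole equivalence is natural.

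The main obstacle is verifying that the sections $s_{i}$ have the required orthogonality properties — that $(\Omega f)_{1}\circ s_{2}$ and $(\Omega f)_{2}\circ s_{1}$ are null homotopic, not merely that each $(\Omega f)_{i}\circ s_{i}$ is the identity. This reduces to the unlooped claim that $f\circ\iota_{1}\colon\uxa^{K\backslash\{v\}}\to\uxa^{K\backslash\{v\}}\times X_{v}$ has trivial second coordinate and $f\circ\iota_{2}\colon X_{v}\to\uxa^{K\backslash\{v\}}\times X_{v}$ has trivial first coordinate, which follows from unwinding the join homeomorphism $\uxa^{\overline{K}}\cong\uxa^{K\backslash\{v\}}\times X_{v}$ and the fact that $\iota_{1}$, $\iota_{2}$ land in $\uxa^{K}$ via the subcomplex inclusions $K\backslash\{v\}\hookrightarrow K$ and $\{v\}\hookrightarrow K$ — the basepoint conditions on the omitted coordinates are exactly what force the relevant projections to be constant.
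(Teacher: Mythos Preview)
The proposal is correct and follows essentially the same approach as the paper: both use the fibration of Lemma~\ref{Ftype}, invoke Lemma~\ref{fullsubcomplex} on the full subcomplexes $K\backslash\{v\}$ and $\{v\}$ to produce sections, verify the orthogonality conditions (your join-homeomorphism argument is just an unpacking of the paper's observation that the vertex sets are disjoint, so the retractions $f_{1}=\pi_{1}\circ f$ and $f_{2}=\pi_{2}\circ f$ kill the opposite sections), and then apply Lemma~\ref{2section} after looping. Your naturality discussion is likewise parallel to the paper's.
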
 

\begin{proof} 
Consider the homotopy fibration
\begin{equation}\label{1fib}
  \nameddright{F^{K}_{v}}{}{\uxa^{K}}{f}{\uxa^{K\backslash\{v\}}
  \times X_{v}} 
\end{equation}
from Lemma~\ref{Ftype}. Observe that $K\backslash\{v\}$ and $\{v\}$ are the full
subcomplexes of $K$ on the sets $[m]-\{v\}$ and $\{v\}$
respectively. So by Lemma~\ref{fullsubcomplex}, the maps 
\(s_{1}\colon\namedright{\uxa^{K\backslash\{v\}}}{}{\uxa^{K}}\) 
and 
\(s_{2}\colon\namedright{X_{v}}{}{\uxa^{K}}\) 
have left inverses 
\(\namedright{\uxa^{K}}{f_{1}}{\uxa^{K\backslash\{v\}}}\) 
and 
\(\namedright{\uxa^{K}}{f_{2}}{X_{v}=\uxa^{\{v\}}}\) 
respectively. Since the vertex sets for $K\backslash\{v\}$ and $\{v\}$ 
are disjoint, the left inverses have the property that $f_{1}\circ s_{2}$ 
and $f_{2}\circ s_{1}$ are trivial. Lemma~\ref{2section} cannot be 
applied immediately since $f$ is usually not an $H$-map, but after looping 
the homotopy fibration~(\ref{1fib}) it can be applied, and this gives the asserted 
homotopy equivalence. 

The naturality property follows from the naturality properties of the simplicial map 
\(\namedright{K}{}{K\backslash\{v\}\ast\{v\}}\), 
the polyhedral product and Lemma~\ref{fullsubcomplex}, together with the 
fact that 
\(\namedright{\Omega\uxa^{K}}{}{\Omega\uxa^{K'}}\) 
is an $H$-map. 
\end{proof}

One more preliminary result is needed before the proof of Theorem~\ref{main}.
Let $K$ be a simplicial complex on the vertex set $[m]$, let $K^{f}$ be the flagification 
of $K$, and let $L$ be the simplicial complex consisting of the vertices of $K$. 
Let $M$ be either $L$ or $K$. If $v$ is a vertex of $K$ then the simplicial map 
\(\namedright{M}{}{K^{f}}\)  
implies that there is commutative diagram of simplicial complexes
\[\diagram
        \link_{M}(v)\rto\dto & M\backslash\{v\}\dto \\
        \link_{K^{f}}(v)\rto & K^{f}\backslash\{v\}.
  \enddiagram\]
Taking polyhedral products and then taking homotopy fibres gives a homotopy
fibration diagram
\begin{equation}
  \label{Bdgrm}
  \diagram
        \Omega\uxa^{M\backslash\{v\}}\rto\dto & B^{M}_{v}\rto\dto^{b_{v}}
            & \uxa^{\link_{M}(v)}\rto\dto & \uxa^{M\backslash\{v\}}\dto \\
        \Omega\uxa^{K^{f}\backslash\{v\}}\rto & B^{K^{f}}_{v}\rto
            & \uxa^{\link_{K^{f}}(v)}\rto & \uxa^{K^{f}\backslash\{v\}}
  \enddiagram
\end{equation}
for some induced map of fibres $b_{v}$.

\begin{lemma}
   \label{Bsplitting}
   Let $M$ be either $L$ or $K$. Suppose that in~\eqref{Bdgrm} the map
   \(\namedright{\Omega\uxa^{M\backslash\{v\}}}{}{\Omega\uxa^{K^{f}\backslash\{v\}}}\)
   has a right homotopy inverse. Then $b_{v}$ has a right homotopy inverse
   \(s_{v}\colon\namedright{B^{K^{f}}_{v}}{}{B^{M}_{v}}\).
   Moreover, $s_v$ can be chosen so that it factors through the map
   \(\namedright{\Omega\uxa^{M\backslash\{v\}}}{}{B^{M}_{v}}\).
\end{lemma}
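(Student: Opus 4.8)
The plan is to exploit the fact that the rows in~\eqref{Bdgrm} are homotopy fibration sequences, so that the connecting maps fit into a ladder, and then combine the hypothesis on $\Omega\uxa^{M\backslash\{v\}}$ with the naturality of the connecting map to produce the desired section of $b_v$ that factors through $\Omega\uxa^{M\backslash\{v\}}$. The key observation is that the left-hand square of~\eqref{Bdgrm},
\[\diagram
   \Omega\uxa^{M\backslash\{v\}}\rto^-{\partial_M}\dto & B^{M}_{v}\dto^{b_{v}} \\
   \Omega\uxa^{K^{f}\backslash\{v\}}\rto^-{\partial_{K^f}} & B^{K^{f}}_{v},
\enddiagram\]
commutes, where $\partial_M$ and $\partial_{K^f}$ are the connecting maps in the respective fibration sequences
\(\nameddright{\Omega\uxa^{M\backslash\{v\}}}{\partial_M}{B^{M}_{v}}{}{\uxa^{\link_{M}(v)}}\)
and similarly for $K^f$.

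First I would observe that the right homotopy inverse we want will be built as a composite $s_v = \partial_M \circ r$, where $r\colon\namedright{B^{K^f}_v}{}{\Omega\uxa^{M\backslash\{v\}}}$ is a suitable map; this automatically gives the required factorisation through $\namedright{\Omega\uxa^{M\backslash\{v\}}}{\partial_M}{B^{M}_{v}}$. To check that $b_v\circ s_v\simeq 1$, using commutativity of the square above we have $b_v\circ\partial_M\circ r\simeq\partial_{K^f}\circ(\Omega\iota)\circ r$, where $\namedright{\Omega\uxa^{M\backslash\{v\}}}{\Omega\iota}{\Omega\uxa^{K^f\backslash\{v\}}}$ is the looped inclusion. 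So it suffices to choose $r$ so that $(\Omega\iota)\circ r$ is a right homotopy inverse for $\partial_{K^f}$. Here is where the hypothesis enters: by assumption $\Omega\iota$ has a right homotopy inverse, say $\namedright{\Omega\uxa^{K^f\backslash\{v\}}}{w}{\Omega\uxa^{M\backslash\{v\}}}$ with $(\Omega\iota)\circ w\simeq 1$. So I would set $r = w\circ t$ where $\namedright{B^{K^f}_v}{t}{\Omega\uxa^{K^f\backslash\{v\}}}$ is a right homotopy inverse for $\partial_{K^f}$. Then $(\Omega\iota)\circ r = (\Omega\iota)\circ w\circ t\simeq t$, which is a right homotopy inverse for $\partial_{K^f}$, as needed.

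The remaining point is to produce the right homotopy inverse $t$ for $\partial_{K^f}$ in the first place. This is exactly an application of Lemma~\ref{rightinverse}: the bottom row of~\eqref{Bdgrm} is a homotopy fibration sequence
\(\namedddright{\Omega\uxa^{K^f\backslash\{v\}}}{\partial_{K^f}}{B^{K^f}_v}{}{\uxa^{\link_{K^f}(v)}}{}{\uxa^{K^f\backslash\{v\}}}\),
so if the map $\namedright{\uxa^{\link_{K^f}(v)}}{}{\uxa^{K^f\backslash\{v\}}}$ has a left homotopy inverse, then $\partial_{K^f}$ has a right homotopy inverse. By Lemma~\ref{slrflag}, $K^f$ being the flagification of $K$ is flag, and then by Lemma~\ref{keyflag}, $\link_{K^f}(v)$ is a full subcomplex of $K^f\backslash\{v\}$; Lemma~\ref{fullsubcomplex} then supplies the required retraction $\namedright{\uxa^{K^f\backslash\{v\}}}{}{\uxa^{\link_{K^f}(v)}}$, hence the left homotopy inverse of the inclusion. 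Applying Lemma~\ref{rightinverse} yields $t$, and the composite $s_v = \partial_M\circ w\circ t$ is the desired section.

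The step I expect to require the most care is verifying that the relevant square in~\eqref{Bdgrm} genuinely commutes up to homotopy in the way needed — that is, that the connecting maps $\partial_M$ and $\partial_{K^f}$ are compatible with the vertical maps. This is a naturality statement for the connecting homomorphism in a map of homotopy fibration sequences; it holds because the fibrations in~\eqref{Bdgrm} are obtained functorially (by turning maps into fibrations via the mapping path space construction, as set up in Section~\ref{sec:htpy}), so the whole ladder, including the connecting maps, is natural. Once this compatibility is in hand, the rest is the formal chaining of right homotopy inverses described above, and the factorisation claim is immediate from the construction $s_v = \partial_M\circ(w\circ t)$.
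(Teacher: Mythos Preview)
Your proposal is correct and follows essentially the same route as the paper's proof: use Lemma~\ref{keyflag} (applied to the flag complex $K^f$) together with Lemma~\ref{fullsubcomplex} to obtain a left inverse for \(\namedright{\uxa^{\link_{K^f}(v)}}{}{\uxa^{K^f\backslash\{v\}}}\), apply Lemma~\ref{rightinverse} to get a section $t$ of $\partial_{K^f}$, compose with the assumed right inverse of $\Omega\iota$, and then with $\partial_M$, using the naturality square in~\eqref{Bdgrm}. One small remark: the appeal to Lemma~\ref{slrflag} is unnecessary here, since $K^f$ is flag by definition of the flagification; the paper simply invokes Lemma~\ref{keyflag} directly.
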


\begin{proof} 
Consider the homotopy fibration along the bottom row of~(\ref{Bdgrm}).  
Since $K^{f}$ is flag, by Lemma~\ref{keyflag}, $\link_{K^{f}}(v)$ is a
full subcomplex of $K^{f}\backslash\{v\}$. Thus
$\uxa^{\link_{K^{f}}(v)}$ is a retract of $\uxa^{K^{f}\backslash\{v\}}$. 
Therefore, by Lemma~\ref{rightinverse}, the map
\(\namedright{\Omega\uxa^{K^{f}\backslash\{v\}}}{}{B^{K^{f}}_{v}}\)
has a right homotopy inverse
\(t\colon\namedright{B^{K^{f}}_{v}}{}{\Omega\uxa^{K^{f}\backslash\{v\}}}\).
By hypothesis, the map
\(\namedright{\Omega\uxa^{M\backslash\{v\}}}{}{\Omega\uxa^{K^{f}\backslash\{v\}}}\)
has a right homotopy inverse
\(s\colon\namedright{\Omega\uxa^{K^{f}\backslash\{v\}}}{}
           {\Omega\uxa^{M\backslash\{v\}}}\).
Thus there is a homotopy commutative diagram
\[\diagram
      B^{K^{f}}_{v}\rto^-{t}
           & \Omega\uxa^{K^{f}\backslash\{v\}}\rto^-{s}\drdouble
           & \Omega\uxa^{M\backslash\{v\}}\rto\dto & B^{M}_{v}\dto^{b_{v}} \\
      & & \Omega\uxa^{K^{f}\backslash\{v\}}\rto & B^{K^{f}}_{v}.
  \enddiagram\]
As the lower direction around the diagram is homotopic to the identity
map on $B^{K^{f}}_{v}$, the upper direction around the diagram
implies that $b_{v}$ has a right homotopy inverse.
\end{proof}

\begin{proof}[Proof of Theorem~\ref{main}]
Let $K$ be a simplicial complex on the vertex set $[m]$, let
$K^{f}$ be its flagification, and let $L$ be $m$ disjoint points.
Then there is a sequence of inclusions of simplicial complexes
\(\nameddright{L}{}{K}{}{K^{f}}\). Taking polyhedral products with
respect to $\uxa$ gives a sequence of maps
\(h\colon\nameddright{\uxa^{L}}{g}{\uxa^{K}}{f}{\uxa^{K^{f}}}\)
We will show that $\Omega h$ has a right homotopy inverse,
implying that the map
\(\Omega f\colon\namedright{\Omega\uxa^{K}}{}{\Omega\uxa^{K^{f}}}\) also has a
right homotopy inverse. This would prove both parts of the
statement of the theorem.

The proof is by induction on the number of vertices. If $m=1$, then
$L$, $K$ and $K^{f}$ all equal the single vertex $\{1\}$, implying
that $h$ is the identity map, and so $\Omega h$ has a right homotopy inverse.
Assume that the statement of the theorem holds for all simplicial complexes
with strictly less than~$m$ vertices. The decomposition and naturality statements
in Proposition~\ref{uxasplit} imply that there is a homotopy commutative diagram
of homotopy equivalences
\begin{equation}
  \label{mainequivdgrm}
  \diagram
      (\Omega\uxa^{L\backslash\{v\}}\times\Omega X_{v})
          \times\Omega(B^{L}_{v}\ast Y_{v})
          \rto^-{\simeq}\dto^{(\Omega a\times 1)\times\Omega(b_{v}\ast 1)}
        & \Omega\uxa^{L}\dto \\
     (\Omega\uxa^{K^{f}\backslash\{v\}}\times\Omega X_{v})
          \times\Omega(B^{K^{f}}_{v}\ast Y_{v})\rto^-{\simeq}
        & \Omega\uxa^{K^{f}}.
  \enddiagram
\end{equation}
Observe that $K^{f}\backslash\{v\}$ has $m-1$ vertices and
\(\namedright{L\backslash\{v\}}{}{K^{f}\backslash\{v\}}\)
is the inclusion of these vertices. Since $L$ and $K^{f}$ are flag complexes,
by Lemma~\ref{slrflag} so are $L\backslash\{v\}$ and $K^{f}\backslash\{v\}$.
Therefore, by inductive hypothesis, the map $\Omega a$ has a right homotopy inverse
\(s\colon\namedright{\Omega\uxa^{K^{f}\backslash\{v\}}}{}
       {\Omega\uxa^{L\backslash\{v\}}}\).
As $L$ and $K^{f}$ are flag complexes and $\Omega a$ has a right
homotopy inverse, by  Lemma~\ref{Bsplitting} the map $b_{v}$ also has a
right homotopy inverse
\(t\colon\namedright{B^{K^{f}}_{v}}{}{B_{v}^{L}}\).
Therefore $t'=\Omega(t\ast 1)$ is a right homotopy inverse for $\Omega(b_{v}\ast 1)$.
Putting $s$ and $t'$ together we obtain a map
\[\llnamedright{\Omega\uxa^{K^{f}\backslash\{v\}}\times\Omega X_{v}\times
         \Omega(B^{K^{f}}_{v}\ast Y_{v})}{s\times 1\times t'}
         {\Omega\uxa^{L\backslash\{v\}}\times\Omega X_{v}\times
         \Omega(B^{L}_{v}\ast Y_{v})}\]
which is a right homotopy inverse of $(\Omega a\times 1)\times\Omega(b_{v}\ast 1)$.
The homotopy equivalences in~(\ref{mainequivdgrm}) therefore imply that the map
\(h\colon\namedright{\Omega\uxa^{L}}{}{\Omega\uxa^{K^{f}}}\)
has a right homotopy inverse. This completes the induction.
\end{proof}

\section{Refinements}
\label{sec:refinements}

This section gives two refinements describing the homotopy type
of the space $B^{K}_{v}$ under certain conditions.
First consider the homotopy fibration diagram~(\ref{Bdgrm}) in the 
case when $M=K$. Define
the space $D_{v}^{K}$ and the map~$d_{v}$ by the homotopy fibration
\begin{equation}
  \label{DKfib}
  \nameddright{D_{v}^{K}}{d_{v}}{B_{v}^{K}}{b_{v}}{B_{v}^{K^{f}}}.
\end{equation}

\begin{lemma}
   \label{Bdecomp}
   Given the hypotheses of Lemma~\ref{Bsplitting}, there is a homotopy equivalence
   $B_{v}^{K}\simeq B_{v}^{K^{f}}\times D_{v}^{K}$.
\end{lemma}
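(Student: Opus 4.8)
The plan is to show that the homotopy fibration~\eqref{DKfib} splits by exhibiting a right homotopy inverse for the map $b_{v}$ and then invoking a standard principle: a homotopy fibration $\namedright{D}{}{B}{}{C}$ over a space whose base $C$ has a right homotopy inverse and whose total space $B$ is a loop space (or more generally an $H$-space) splits as a product $B\simeq C\times D$. So the first step is to observe that $B^{K}_{v}$, $B^{K^{f}}_{v}$ and $D^{K}_{v}$ all have the homotopy type of loop spaces, or at least $H$-spaces, so that this splitting principle applies. In fact $B^{M}_{v}$ is the homotopy fibre of the map $\namedright{\uxa^{\link_{M}(v)}}{}{\uxa^{M\backslash\{v\}}}$, and this map has a section up to homotopy precisely when $\link_{M}(v)$ is a full subcomplex of $M\backslash\{v\}$ (Lemma~\ref{fullsubcomplex}); loop space structure can also be extracted from the fibration $\namedright{\Omega\uxa^{M\backslash\{v\}}}{}{B^{M}_{v}}{}{\uxa^{\link_{M}(v)}}$ appearing in the top row of~\eqref{Bdgrm}, so $B^{M}_{v}$ admits an $H$-structure for which the connecting map is an $H$-map.

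The key step is to produce the right homotopy inverse of $b_{v}$, and this is exactly the content of Lemma~\ref{Bsplitting}: under the stated hypotheses there is a map $\namedright{s_{v}\colon B^{K^{f}}_{v}}{}{B^{K}_{v}}$ with $b_{v}\circ s_{v}\simeq 1$. So the second step is simply to quote Lemma~\ref{Bsplitting}. The third step is to assemble the splitting: using the homotopy fibration~\eqref{DKfib}, the section $s_{v}$, and the $H$-structure on $B^{K}_{v}$, one forms the composite
\[\llnameddright{B^{K^{f}}_{v}\times D^{K}_{v}}{s_{v}\times d_{v}}{B^{K}_{v}\times B^{K}_{v}}{\mu}{B^{K}_{v}}\]
where $\mu$ is the multiplication, and checks it is a weak homotopy equivalence by comparing the two fibrations via the five lemma on homotopy groups — on $\pi_{*}(D^{K}_{v})$ it restricts to the inclusion of the fibre, and after projecting to $B^{K^{f}}_{v}$ it is the identity because $b_{v}\circ s_{v}\simeq 1$ and $b_{v}\circ d_{v}\simeq\ast$. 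Since all spaces have the homotopy type of $CW$-complexes, a weak homotopy equivalence is a homotopy equivalence, giving $B^{K}_{v}\simeq B^{K^{f}}_{v}\times D^{K}_{v}$.

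The main obstacle is the loop-space / $H$-space bookkeeping needed to justify that $\mu\circ(s_{v}\times d_{v})$ really is an equivalence: one has to be a little careful that $b_{v}$ is compatible with the $H$-structures (so that it is an $H$-map, as in the setup of Lemma~\ref{2section}), since otherwise the product of the section and the fibre inclusion need not recover the total space. Concretely, the cleanest route is to loop the fibration~\eqref{DKfib} — or rather to work with the fibration $\namedright{\Omega\uxa^{K\backslash\{v\}}}{}{B^{K}_{v}}{}{\uxa^{\link_{K}(v)}}$ and its comparison to the $K^{f}$ version — so that the relevant maps become $H$-maps, and then apply Lemma~\ref{2section} (or the simpler principle preceding it) with $B_{1}=B^{K^{f}}_{v}$ playing the role of the based space with a section and $D^{K}_{v}$ the fibre. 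This reduces the proof to citing Lemma~\ref{Bsplitting} for the section and the general fibration splitting lemma for the conclusion, with the only real work being the verification of the $H$-map hypothesis.
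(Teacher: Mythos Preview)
Your proposal has a genuine gap at exactly the point you flag as the ``main obstacle'': you never actually produce an $H$-structure on $B_{v}^{K}$, and in fact the paper explicitly notes that $B_{v}^{K}$ \emph{need not be an $H$-space}. Your claimed extraction of a loop-space or $H$-structure from the fibration sequence $\Omega\uxa^{K\backslash\{v\}}\to B_{v}^{K}\to\uxa^{\link_{K}(v)}$ is unjustified --- a fibration with an $\Omega$-space as fibre does not endow the total space with a multiplication. Your remark that $\link_{M}(v)$ being a full subcomplex of $M\backslash\{v\}$ gives a section is also off: it gives a \emph{retraction} (and for $M=K$ with $K$ not flag this need not hold anyway, so it does not help with $B_{v}^{K}$). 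Your fallback of looping the fibration~\eqref{DKfib} only yields $\Omega B_{v}^{K}\simeq\Omega B_{v}^{K^{f}}\times\Omega D_{v}^{K}$, which is weaker than the statement to be proved.

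The paper circumvents the missing $H$-structure entirely. The key device is the \emph{homotopy action} $\theta\colon\Omega\uxa^{K\backslash\{v\}}\times B_{v}^{K}\to B_{v}^{K}$ that exists for the connecting map of any homotopy fibration; this plays the role of your multiplication $\mu$ without requiring $B_{v}^{K}$ to be an $H$-space. To exploit it one needs the ``Moreover'' clause of Lemma~\ref{Bsplitting}, which says the section $s_{v}$ factors as $B_{v}^{K^{f}}\xrightarrow{s'_{v}}\Omega\uxa^{K\backslash\{v\}}\to B_{v}^{K}$. Then the composite
\[
\psi\colon B_{v}^{K^{f}}\times D_{v}^{K}\xrightarrow{\,s'_{v}\times d_{v}\,}\Omega\uxa^{K\backslash\{v\}}\times B_{v}^{K}\xrightarrow{\ \theta\ }B_{v}^{K}
\]
restricts to $s_{v}$ and $d_{v}$ on the two factors, giving a trivialisation of~\eqref{DKfib}. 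You cite Lemma~\ref{Bsplitting} only for the existence of $s_{v}$, but it is precisely its factorisation through the loop space --- which you do not mention --- that makes the argument go through.
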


\begin{proof}
By Lemma~\ref{Bsplitting}, $b_{v}$ has a right homotopy inverse
\(\namedright{B_{v}^{K^{f}}}{s_{v}}{B_{v}^{K}}\).
As $B_{v}^{K}$ need not be an $H$-space this does not immediately
imply that it is homotopy equivalent to $B_{v}^{K^{f}}\times D_{v}^{K}$.
However, Lemma~\ref{Bsplitting} also says that $s_{v}$ can be chosen
to factor through the homotopy fibration connecting map
\(\namedright{\Omega\uxa^{K\backslash\{v\}}}{}{B_{v}^{K}}\).
That is, $s_{v}$ can be chosen to be a composite
\(\nameddright{B_{v}^{K^{f}}}{s'_{v}}{\Omega\uxa^{K\backslash\{v\}}}{}{B_{v}^{K}}\)
for some map $s'_{v}$. For any homotopy fibration sequence
\(\namedddright{\Omega B}{\delta}{F}{}{E}{}{B}\)
the connecting map~$\delta$ satisfies a homotopy action
\(\theta\colon\namedright{\Omega B\times F}{}{F}\)
which restricts to the identity map on $F$ and~$\delta$ on~$\Omega B$.
In our case, we obtain a composite
\[\psi\colon\llnameddright{B_{v}^{K^{f}}\times D_{v}^{K}}{s'_{v}\times d_{v}}
       {\Omega\uxa^{K\backslash\{v\}}\times B_{v}^{K}}{\theta}{B_{v}^{K}}.\]
Observe that the restriction of $\psi$ to $B_{v}^{K^{f}}$
is $s_{v}$ and the restriction to $D_{v}^{K}$ is $d_{v}$. Thus $\psi$ is a
trivialization of the homotopy fibration~(\ref{DKfib}), implying that it is a
homotopy equivalence.
\end{proof}

Second, suppose that $K$ is a flag complex. By Lemma~\ref{keyflag},
$\link_{K}(v)$ is a full subcomplex of $K\backslash\{v\}$. So by
Lemma~\ref{fullsubcomplex}, the inclusion
\(\namedright{\uxa^{\link_{K}(v)}}{}{\uxa^{K\backslash\{v\}}}\) has
a left inverse. Define $C^{K}_{v}$ by the homotopy fibration
\begin{equation}
  \label{Bdeloop}
  \nameddright{C^{K}_{v}}{}{\uxa^{K\backslash\{v\}}}{}{\uxa^{\link_{K}(v)}}.
\end{equation}
From the retraction of $\uxa^{\link_{K}(v)}$ off $\uxa^{K\backslash\{v\}}$
and the definitions of $B^{K}_{v}$ and $C^{K}_{v}$ we obtain a homotopy pullback diagram
  \[\diagram
       B^{K}_{v}\rdouble\dto & B^{K}_{v}\dto & \\
       \ast\rto\dto & \uxa^{\link_{K}(v)}\rdouble\dto & \uxa^{\link_{K}(v)}\ddouble \\
       C^{K}_{v}\rto & \uxa^{K\backslash\{v\}}\rto & \uxa^{\link_{K}(v)}.
  \enddiagram\]
Thus $B^{K}_{v}\simeq\Omega C^{K}_{v}$.

\begin{lemma}
   \label{uxaflagsplit}
   Let $K$ be a flag complex on the vertex set $[m]$ and let $v$ be a vertex of $K$.
   Then there are homotopy equivalences
   \begin{align*}
       & \Omega\uxa^{K}\simeq\Omega X_{v}\times\Omega\uxa^{K\backslash\{v\}}
            \times\Omega(\Omega C^{K}_{v}\ast Y_{v}) \\
      & \Omega\uxa^{K\backslash\{v\}}\simeq\Omega\uxa^{\link_{K}(v)}\times\Omega C^{K}_{v}.
   \end{align*}
\end{lemma}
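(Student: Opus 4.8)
The plan is to derive both equivalences from results already established, with the first following from Proposition~\ref{uxasplit} after identifying $B^{K}_{v}$, and the second following from the homotopy fibration~\eqref{Bdeloop} once a section is produced. First I would observe that Proposition~\ref{uxasplit} gives, for any vertex $v$ of any simplicial complex $K$ on $[m]$, a homotopy equivalence
\[
   \Omega\uxa^{K}\simeq\Omega X_{v}\times\Omega\uxa^{K\backslash\{v\}}\times\Omega(B^{K}_{v}\ast Y_{v}).
\]
Since $K$ is assumed flag, Lemma~\ref{keyflag} shows $\link_{K}(v)$ is a full subcomplex of $K\backslash\{v\}$, and the discussion immediately preceding this lemma (using Lemma~\ref{fullsubcomplex} and the homotopy pullback diagram assembled there from the retraction of $\uxa^{\link_{K}(v)}$ off $\uxa^{K\backslash\{v\}}$) establishes the homeomorphism-level identification $B^{K}_{v}\simeq\Omega C^{K}_{v}$, where $C^{K}_{v}$ is defined by~\eqref{Bdeloop}. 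Substituting $\Omega C^{K}_{v}$ for $B^{K}_{v}$ in the displayed equivalence yields the first asserted equivalence.

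For the second equivalence I would work directly with the homotopy fibration
\[
   \nameddright{C^{K}_{v}}{}{\uxa^{K\backslash\{v\}}}{p}{\uxa^{\link_{K}(v)}}
\]
of~\eqref{Bdeloop}. By Lemma~\ref{fullsubcomplex}, the inclusion $\uxa^{\link_{K}(v)}\to\uxa^{K\backslash\{v\}}$ is a section of $p$ up to homotopy, so $p$ has a right homotopy inverse. Looping the fibration gives a homotopy fibration
\(\nameddright{\Omega C^{K}_{v}}{}{\Omega\uxa^{K\backslash\{v\}}}{\Omega p}{\Omega\uxa^{\link_{K}(v)}}\)
in which $\Omega p$ is an $H$-map between loop spaces and still admits a right homotopy inverse (obtained by looping the section). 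By the general splitting principle recorded just before Lemma~\ref{2section} — namely, that for a homotopy fibration $\nameddright{F}{}{E}{}{B}$ with $E$ an $H$-space and a right homotopy inverse $s\colon B\to E$ of the projection, the composite $\nameddright{B\times F}{s\times f}{E\times E}{\mu}{E}$ is a homotopy equivalence — applied with $E=\Omega\uxa^{K\backslash\{v\}}$, we obtain
\[
   \Omega\uxa^{K\backslash\{v\}}\simeq\Omega\uxa^{\link_{K}(v)}\times\Omega C^{K}_{v},
\]
which is the second assertion.

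The only point requiring any care is the verification that $B^{K}_{v}$, defined earlier as the homotopy fibre $P_{j}$ of $\namedright{\uxa^{\link_{K}(v)}}{j}{\uxa^{K\backslash\{v\}}}$, really is homotopy equivalent to $\Omega C^{K}_{v}$; but this is exactly what the homotopy pullback diagram preceding the lemma statement delivers, since the left-hand column of that diagram exhibits $B^{K}_{v}$ as the fibre of $\ast\to\uxa^{\link_{K}(v)}$ pulled back along the fibration $C^{K}_{v}\to\uxa^{K\backslash\{v\}}\to\uxa^{\link_{K}(v)}$, i.e.\ as $\Omega C^{K}_{v}$. No step here is a genuine obstacle; the argument is a bookkeeping assembly of Proposition~\ref{uxasplit}, Lemmas~\ref{keyflag} and~\ref{fullsubcomplex}, the $B^{K}_{v}\simeq\Omega C^{K}_{v}$ identification, and the standard section-splitting for $H$-space fibrations.
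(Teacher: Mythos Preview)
Your proposal is correct and follows essentially the same approach as the paper: the first equivalence is obtained from Proposition~\ref{uxasplit} together with the identification $B^{K}_{v}\simeq\Omega C^{K}_{v}$ established just before the lemma, and the second from the fibration~\eqref{Bdeloop} combined with the retraction of $\uxa^{\link_{K}(v)}$ off $\uxa^{K\backslash\{v\}}$. The paper's proof is simply terser, saying both are immediate consequences of these ingredients, whereas you spell out the looping and the $H$-space splitting explicitly.
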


\begin{proof}
The first homotopy equivalence follows immediately from
Proposition~\ref{uxasplit}, while the second is an immediate
consequence of the homotopy fibration~(\ref{Bdeloop}) and the
retraction of $\uxa^{\link_{K}(v)}$ off $\uxa^{K\backslash\{v\}}$.
\end{proof}

\section{Co-$H$-space properties}
\label{sec:coH}

In this section we consider polyhedral products of the form $\cyy^{K}$
and identify the class of flag complexes $K$ for which $\cyy^{K}$ is a
co-$H$-space. As a corollary, we obtain conditions that allow for a
delooping of the statement of Theorem~\ref{main}. This begins with an
abstract lemma.

\begin{lemma}
   \label{delooplemma}
   Let $A$ and $B$ be pointed spaces with the homotopy types of
   $CW$-complexes. Suppose that there is a pointed map
   \(f\colon\namedright{A}{}{B}\)
   and $B$ is a co-$H$-space. If $\Omega f$ has a right homotopy inverse
   then $f$ has a right homotopy inverse.
\end{lemma}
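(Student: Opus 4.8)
The plan is to use the standard adjunction between a co-$H$-structure on $B$ and the existence of a comultiplication, together with the right homotopy inverse for $\Omega f$, to build a right homotopy inverse for $f$ directly. First I would let $s\colon\namedright{\Omega B}{}{\Omega A}$ be a right homotopy inverse for $\Omega f$, so $\Omega f\circ s\simeq 1_{\Omega B}$. I would then adjoint $s$ (or rather its composite with the canonical evaluation $\namedright{\Sigma\Omega A}{}{A}$) to produce a candidate map out of $B$. More precisely, the plan is to form the composite
\[\namedddright{B}{\sigma}{\Sigma\Omega B}{\Sigma s}{\Sigma\Omega A}{ev}{A}\]
where $\sigma$ is the map that exists because $B$ is a co-$H$-space (it is a right homotopy inverse for the evaluation $\namedright{\Sigma\Omega B}{}{B}$), and $ev$ is the canonical evaluation map. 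I claim this composite is the desired right homotopy inverse for $f$.

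The key step is to verify $f\circ(ev\circ\Sigma s\circ\sigma)\simeq 1_B$. For this I would use naturality of the evaluation map $\namedright{\Sigma\Omega(-)}{ev}{(-)}$ applied to $f$, which gives a homotopy commutative square relating $ev\circ\Sigma\Omega f$ on $\Sigma\Omega A$ to $f\circ ev$ on $\Sigma\Omega A$. Chasing the composite: $f\circ ev\circ\Sigma s\circ\sigma\simeq ev\circ\Sigma\Omega f\circ\Sigma s\circ\sigma=ev\circ\Sigma(\Omega f\circ s)\circ\sigma\simeq ev\circ\Sigma(1_{\Omega B})\circ\sigma=ev\circ\sigma\simeq 1_B$, the last homotopy holding precisely because $B$ is a co-$H$-space and $\sigma$ is chosen as a homotopy section of the evaluation $\namedright{\Sigma\Omega B}{ev}{B}$. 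The $CW$ hypotheses ensure the relevant adjunctions and the fact that a co-$H$-space admits such a section $\sigma$; recall that a connected co-$H$-complex $B$ has the property that the evaluation $\namedright{\Sigma\Omega B}{ev}{B}$ has a right homotopy inverse (this is the James/Ganea-type splitting of co-$H$-spaces, and can be taken as the working definition of the needed structure).

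The main obstacle I expect is making the existence of the section $\sigma\colon\namedright{B}{}{\Sigma\Omega B}$ precise and ensuring it is compatible with basepoints and the $CW$ hypothesis. One clean way to sidestep delicate point-set issues is to phrase the argument entirely in terms of the co-$H$-structure: a co-$H$-space $B$ comes with a comultiplication, and a pointed map out of $B$ can be "transferred" across $f$ using that a co-$H$-space $B$ maps to any space $A$ in a way controlled by $\pi_*$-surjectivity information. Concretely, the functor $[B,-]$ on the category of spaces under $B$ being co-$H$ means the natural transformation $[B,\Omega(-)]\to$ (loop data) behaves well; but the cleanest route remains the evaluation-map computation above. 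If one prefers to avoid even naming $\Sigma\Omega B$, an alternative is: since $B$ is a co-$H$-space, $[B,-]$ sends the homotopy fibration $\namedddright{F}{}{A}{f}{B}{}{}$ to an exact sequence in which the connecting behaviour is governed by $\Omega B$, and the splitting of $\Omega f$ forces $[B,f]\colon[B,A]\to[B,B]$ to be surjective, so $1_B$ lifts to a map $\namedright{B}{}{A}$. Either way, the heart of the matter is the single identity $ev\circ\Sigma(\Omega f)\circ\Sigma s\simeq ev$ combined with the co-$H$ splitting of the evaluation on $B$, and the rest is formal.
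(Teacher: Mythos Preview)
Your argument is correct and is essentially identical to the paper's proof: both construct the right inverse as the composite $ev\circ\Sigma t\circ\sigma$ (in the paper's notation $ev\circ\Sigma t\circ s$), using Ganea's result that a co-$H$-space admits a section of the evaluation map $\Sigma\Omega B\to B$, together with naturality of $ev$. The only difference is notational.
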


\begin{proof}
Since $B$ is a co-$H$-space, by~\cite{G2} there is a map
\(s\colon\namedright{B}{}{\Sigma\Omega B}\)
which is a right homotopy inverse to the canonical evaluation map
\(ev\colon\namedright{\Sigma\Omega B}{}{B}\).
Let
\(t\colon\namedright{\Omega B}{}{\Omega A}\)
be a right homotopy inverse of $\Omega f$. Consider the diagram
\[\diagram
         B\rto^-{s} & \Sigma\Omega B\dto^-{\Sigma t}\drdouble & \\
         & \Sigma\Omega A\rto^-{\Sigma\Omega f}\dto^{ev}
              & \Sigma\Omega B\dto^{ev} \\
         & A\rto^-{f} & B.
  \enddiagram\]
The upper triangle homotopy commutes since $t$ is a right homotopy
inverse of $\Omega f$. The lower square homotopy commutes by the
naturality of the evaluation map. The upper direction around the diagram
is homotopic to $ev\circ s$, which is the identity map on $B$. The lower
direction around the diagram therefore implies that $ev\circ\Sigma t\circ s$
is a right homotopy inverse of $f$.
\end{proof}

\begin{proposition}
   \label{delooping}
   Let $K$ be a simplicial complex on the vertex set $[m]$, let $K^{f}$
   be the flagification of $K$, and let $Y_{1},\ldots,Y_{m}$ be pointed $CW$-complexes.
   If $\cyy^{K^{f}}$ is homotopy equivalent to a co-$H$-space then the map
   \(f\colon\namedright{\cyy^{K}}{}{\cyy^{K^{f}}}\)
   induced by the simplicial inclusion 
   \(\namedright{K}{}{K^{f}}\)
   has a right homotopy inverse.
\end{proposition}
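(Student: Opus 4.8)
The plan is to combine the looped statement already available from Theorem~\ref{main} with the general delooping principle of Lemma~\ref{delooplemma}. By Theorem~\ref{main}(a), applied to the sequence \(\namedright{\cyy^{K}}{f}{\cyy^{K^{f}}}\) of polyhedral products induced by the simplicial inclusion \(\namedright{K}{}{K^{f}}\), the looped map \(\Omega f\) has a right homotopy inverse. (Here the pairs \((CY_{i},Y_{i})\) are admissible for the theorem since each \(Y_{i}\) is a pointed \(CW\)-complex, so \(CY_{i}\) is a pointed \(CW\)-complex and \(Y_{i}\) a pointed \(CW\)-subcomplex.) So the key homotopy-theoretic input is in place; it remains only to deloop it.

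Next I would verify the hypotheses of Lemma~\ref{delooplemma} with \(A=\cyy^{K}\) and \(B=\cyy^{K^{f}}\). Both are polyhedral products built from \(CW\)-pairs, hence have the homotopy type of \(CW\)-complexes, as noted in Section~\ref{sec:htpy}. The map \(f\) is the pointed map induced by the simplicial inclusion. The remaining hypothesis — that \(B\) be a co-\(H\)-space — is exactly the assumption in the statement of the proposition: \(\cyy^{K^{f}}\) is homotopy equivalent to a co-\(H\)-space. Since being a co-\(H\)-space is a homotopy-invariant property, we may regard \(\cyy^{K^{f}}\) itself as a co-\(H\)-space for the purposes of applying the lemma; alternatively, one transports \(f\) across the homotopy equivalence \(\cyy^{K^{f}}\simeq B'\) with \(B'\) a co-\(H\)-space, notes that \(\Omega f\) still has a right homotopy inverse after this transport, applies Lemma~\ref{delooplemma} to the transported map, and transports the resulting section back.

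Then the conclusion is immediate: Lemma~\ref{delooplemma} yields that \(f\) itself has a right homotopy inverse, which is the assertion of the proposition.

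The proof is therefore very short, and there is no real obstacle — the substantive work was already done in Theorem~\ref{main} and Lemma~\ref{delooplemma}. The only point requiring a word of care is the mild bookkeeping around "homotopy equivalent to a co-\(H\)-space" versus "is a co-\(H\)-space," but since a right homotopy inverse is preserved under pre- and post-composition with homotopy equivalences, and looping is functorial up to homotopy, this causes no difficulty. (One should also record in passing that the genuinely new content of Section~\ref{sec:coH} is the forthcoming combinatorial characterisation of which flag complexes \(K\) make \(\cyy^{K}\) a co-\(H\)-space, for which this proposition is the enabling consequence; the proposition as such is a formal corollary.)
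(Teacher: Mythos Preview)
Your proposal is correct and follows exactly the paper's own proof: invoke Theorem~\ref{main} to get a right homotopy inverse for $\Omega f$, then apply Lemma~\ref{delooplemma} using the co-$H$-space hypothesis on $\cyy^{K^{f}}$. The paper's proof is two sentences long and omits the bookkeeping you spell out, but the argument is identical.
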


\begin{proof}
Taking $\uxa=\cyy$, by Theorem~\ref{main},
$\Omega f\colon\Omega\cyy^K\longrightarrow\Omega\cyy^{K^f}$
has a right homotopy inverse. Since $\cyy^{K^{f}}$ is a
co-$H$-space, Lemma~\ref{delooplemma} implies that $f$
has a right homotopy inverse.
\end{proof}

\begin{remark}
Note that in Proposition~\ref{delooping} we do not need to assume that
$Y_1,\ldots,Y_m$ are path-connected. Since we asssume that every singleton
of $[m]$ is a vertex ($K$ is on the vertex set~$[m]$), $\cyy^K$ is path-connected
even if $Y$ is not.
\end{remark}

Next we obtain a characterisation of those flag complexes $K$ for
which $\cyy^K$ is a co-$H$-space. In terms of notation, when all
pairs in the sequence $\{(X_{i},A_{i})\}_{i=1}^{m}$ are the same,
$(X_i,A_i)=(X,A)$, we use the notation $(X,A)^K$ for~$\uxa^K$.
Special cases are the Davis-Januskiewicz space
$DJ(K)=(\mathbb{C}P^{\infty},\ast)^{K}$ and the moment-angle
complex $\zk=(D^{2},S^{1})^{K}$.

A graph $\Gamma$ is called \emph{chordal} if each of its
cycles with $\ge 4$ vertices has a chord (an edge joining two
vertices that are not adjacent in the cycle). Equivalently, a
chordal graph is a graph with no induced cycles of length more
than three. By the result of Fulkerson and Gross~\cite{fu-gr65} a
graph is chordal if and only if its vertices can be ordered in
such a way that, for each vertex~$i$, the lesser neighbours of~$i$
form a clique. Such an order of vertices is called a \emph{perfect
elimination ordering}.

By~\cite{GPTW}, $\mathcal{Z}_{K^{f}}=(D^2,S^1)^{K^f}$ is homotopy
equivalent to a wedge of spheres if and only if the $1$-skeleton
of $K^{f}$ is a chordal graph. In particular, if the $1$-skeleton
of $K^{f}$ is a chordal graph then $\mathcal{Z}_{K^{f}}$ is a
co-$H$-space. This result is readily extended to general
polyhedral products of the form $\cyyk$, where $CY$ denotes the
cone over~$Y$. Let $X^{\vee k}$ be the $k$-fold wedge of~$X$.

\begin{theorem}\label{theo:chordec}
Assume that $K$ is a flag complex on the vertex set~$[m]$ and
$\widetilde H^*(Y_i;\mathbb Z)\ne0$ for $1\le i\le m$. The
following conditions are equivalent
\begin{itemize}
\item[(a)] the $1$-skeleton $K^1$ is a chordal graph;

\item[(b)] $\cyyk$ is a co-$H$-space.
\end{itemize}
Furthermore, if $K^1$ is chordal, there is a homotopy equivalence
\begin{equation}\label{chordec}
  \cyyk\simeq\bigvee_{k=2}^m\quad\bigvee_{1\le i_1<\cdots<i_k\le m}
  \bigl(\Sigma Y_{i_1}\wedge\cdots\wedge Y_{i_k}
  \bigr)^{\vee\, c(i_1,\ldots,i_k)},
\end{equation}
where $c(i_1,\ldots,i_k)=\mathop\mathrm{rank}\widetilde
H^0(K_{\{i_1,\ldots,i_k\}})$ is one less than the number of connected
components of the full subcomplex~$K_{\{i_1,\ldots,i_k\}}$.
\end{theorem}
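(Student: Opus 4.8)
The plan is to prove the two implications separately: (b)$\Rightarrow$(a) by contraposition, and (a)$\Rightarrow$(b) together with the decomposition~(\ref{chordec}) simultaneously, by induction on $m$.

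For (b)$\Rightarrow$(a) I would argue contrapositively. Suppose $K^1$ is not chordal; then it has an induced cycle $C$ of length $\ell\ge4$ on some vertex subset $\omega\subseteq[m]$, and since $K$ is flag and $C$ has no chords, the full subcomplex $K_\omega$ is exactly the boundary of the $\ell$-gon. By Lemma~\ref{fullsubcomplex} the inclusion $\cyy^{K_\omega}\to\cyy^{K}$ has a left homotopy inverse, and a retract of a co-$H$-space is a co-$H$-space, so it suffices to show $\cyy^{C}$ is not a co-$H$-space. As a co-$H$-space has vanishing reduced cup products, I would exhibit a nonzero product in $\widetilde H^{*}(\cyy^{C})$, using the (known) description of the cohomology ring of a polyhedral product and its naturality in the pairs $(CY_i,Y_i)$. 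The moment-angle manifold $\mathcal Z_{C}=\cyy^{C}|_{Y_i=S^1}$ is the moment-angle manifold over the $\ell$-gon, which for $\ell\ge4$ is a connected sum of products of two spheres and so carries a nonzero cup product of two positive-degree classes; the corresponding product in $\widetilde H^{*}(\cyy^{C})$ is obtained from it by the substitution $\widetilde H^{*}(S^1)\rightsquigarrow\widetilde H^{*}(Y_i)$ and remains nonzero because each $\widetilde H^{*}(Y_i)\ne0$ (the relevant external products of generators survive, a smash of spaces with nontrivial reduced cohomology having nontrivial reduced cohomology). Hence $\cyy^{C}$, and therefore $\cyy^{K}$, is not a co-$H$-space.

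For (a)$\Rightarrow$(b) and~(\ref{chordec}) I would induct on $m$, following the strategy of the moment-angle case in~\cite{GPTW} but using the decompositions of the present paper. The case $m=1$ is trivial, both sides being contractible. For the inductive step I would choose a simplicial vertex $v$ of the chordal graph $K^1$; then $N(v)=\link_{K^1}(v)$ is a clique, so $K$ being flag forces $\link_K(v)$ to be a full simplex, whence $\cyy^{\link_K(v)}\simeq\prod_{j\in W}Y_j$, where $W=[m]\setminus(\{v\}\cup N(v))$ is the set of non-neighbours of $v$. The complex $K\setminus\{v\}$ is again flag (Lemma~\ref{slrflag}), with $1$-skeleton an induced subgraph of $K^1$ and hence chordal, so the inductive hypothesis applies to it. Combining the pushout presentation~(\ref{polycubedata}) of $\cyy^{K}$, the retraction $\cyy^{K}\to\cyy^{K\setminus\{v\}}$ of Lemma~\ref{fullsubcomplex} (since $K\setminus\{v\}=K_{[m]\setminus\{v\}}$ is a full subcomplex of $K$), the fibre decomposition of Lemma~\ref{uxaflagsplit}, and Lemma~\ref{keyflag} (that $\link_K(v)$ is a full subcomplex of $K\setminus\{v\}$), I would identify the cofibre of $\cyy^{K\setminus\{v\}}\hookrightarrow\cyy^{K}$, show that the associated cofibre sequence splits, and conclude
\[
  \cyy^{K}\ \simeq\ \cyy^{K\setminus\{v\}}\ \vee\ \bigl(Y_v\wedge\cyy^{K\setminus\{v\}}\bigr)
  \ \vee\ \Bigl(\Sigma Y_v\wedge\!\!\bigvee_{\varnothing\ne\tau\subseteq W}\ \bigwedge_{i\in\tau}Y_i\Bigr).
\]
Substituting the inductive decomposition of $\cyy^{K\setminus\{v\}}$ and reindexing then yields~(\ref{chordec}), once the multiplicities are matched: one checks that $c(\{v\}\cup\tau)=\operatorname{rank}\widetilde H^{0}(K_{\{v\}\cup\tau})$ equals $c(\tau)$ when $\tau\cap N(v)\ne\varnothing$ and $c(\tau)+1$ when $\tau\subseteq W$, which holds because $K_{\{v\}\cup\tau}$ folds onto $K_\tau$ in the first case ($v$ is dominated by any vertex of $\tau\cap N(v)$) and is the disjoint union of $K_\tau$ with the point $\{v\}$ in the second. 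As the right-hand side of~(\ref{chordec}) is a wedge of suspensions it is in particular a co-$H$-space, so~(b) follows, completing the induction.

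The step I expect to be the main obstacle is the inductive step of (a)$\Rightarrow$(\ref{chordec}) as an \emph{unstable} statement: identifying the cofibre of $\cyy^{K\setminus\{v\}}\hookrightarrow\cyy^{K}$ and splitting its cofibre sequence. Lemma~\ref{uxaflagsplit} only decomposes $\Omega\cyy^{K}$, and the analogous Bahri--Bendersky--Cohen--Gitler stable splitting only decomposes $\Sigma\cyy^{K}$, so producing the unstable wedge~(\ref{chordec}) needs genuine control of the pushout~(\ref{polycubedata}); this is precisely where chordality of $K^1$ enters, through Lemma~\ref{keyflag} and, equivalently, through the fact that for a chordal graph every full subcomplex $K_\omega$ of the clique complex collapses to a discrete set, so that $|K_\omega|$ is homotopy discrete. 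Carrying the non-co-$H$ factor $\cyy^{\link_K(v)}\simeq\prod_{j\in W}Y_j$ through the argument without it obstructing the final wedge decomposition, and matching it against the combinatorial multiplicities $c(\omega)$, is the delicate point. (Alternatively one could first deduce~(b) from the moment-angle case and then upgrade the stable splitting to an unstable one, but the case of non-path-connected $Y_i$, permitted here, makes the direct inductive construction the cleaner route.)
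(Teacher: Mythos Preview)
Your overall strategy matches the paper's exactly: (b)$\Rightarrow$(a) by contraposition via a nontrivial cup product on an induced cycle, and (a)$\Rightarrow$(b) together with~(\ref{chordec}) by induction on $m$ using a simplicial vertex (equivalently, the last vertex in a perfect elimination ordering). The decomposition you display at the end of the inductive step is exactly the one the paper obtains, and your combinatorial bookkeeping for the multiplicities $c(i_1,\ldots,i_k)$ is correct and is the ``simple counting argument'' the paper leaves to the reader.

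The obstacle you flagged is real, but the tools you list for it --- the retraction of Lemma~\ref{fullsubcomplex}, the loop-space decomposition of Lemma~\ref{uxaflagsplit}, and Lemma~\ref{keyflag} --- are not what resolves it; the paper uses none of them here, and in particular never splits a cofibre sequence after the fact. The missing observation is one you have essentially already made: since $v$ is simplicial, $\cyy^{\link_K(v)}\simeq\prod_{i\in W}Y_i$, and because every $i\in W$ is a \emph{vertex} of $K\setminus\{v\}$, the inclusion $\prod_{i\in W}Y_i\hookrightarrow\cyy^{K\setminus\{v\}}$ is null-homotopic (each $Y_i$ cones off inside its $CY_i$). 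Hence in the pushout~(\ref{FTprelim}) the map $j\times 1$ factors, up to homotopy, as the projection onto $Y_v$ followed by the inclusion of $Y_v$ into $\cyy^{K\setminus\{v\}}\times Y_v$. The pushout then decomposes into two elementary pushouts, giving
\[
  \cyy^K\ \simeq\ \bigl(\cyy^{K\setminus\{v\}}\rtimes Y_v\bigr)\ \vee\ \Bigl(\bigl(\textstyle\prod_{i\in W}Y_i\bigr)\ast Y_v\Bigr),
\]
and since $\cyy^{K\setminus\{v\}}$ is a suspension by induction, the half-smash splits as $\cyy^{K\setminus\{v\}}\vee(\cyy^{K\setminus\{v\}}\wedge Y_v)$. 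That is your displayed formula, obtained by direct pushout manipulation rather than via fibre sequences.

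For (b)$\Rightarrow$(a), the paper's argument is the same as yours; it takes the real case $(D^1,S^0)^{K_\omega}$ (an orientable surface of positive genus) rather than the complex one, and then invokes~\cite[Theorem~1.9]{BBCG2} to transport the nontrivial cup product to general $(CY_i,Y_i)$, which is the rigorous version of your ``substitution'' step.
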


\begin{proof}
The argument is similar to~\cite[Theorem~4.6]{GPTW}
or~\cite[Theorem~4.3]{pa-ve16}, but this time we keep track of the
wedge summands. Assume that $K^1$ is chordal. Choose a perfect
elimination ordering of vertices, and for each vertex
$i=1,\ldots,m$ denote by $\sigma_i$ the face of $K$ corresponding
to the clique of $K^1$ consisting of $i$ and its lesser
neighbours. All maximal faces of $K$ are among
$\sigma_1,\ldots,\sigma_m$, so we have
$\bigcup_{i=1}^m\sigma_i=K$. Furthermore, for each $k=1,\ldots,m$
the perfect elimination ordering on $K$ induces such an ordering
on the full subcomplex $K_{\{1,\ldots,k-1\}}$, so we have
$\bigcup_{i=1}^{k-1}\sigma_i=K_{\{1,\ldots,k-1\}}$. In particular,
the simplicial complex $\bigcup_{i=1}^{k-1}\sigma_i$ is flag as a
full subcomplex in a flag complex. The intersection
$\sigma_k\cap\bigcup_{i=1}^{k-1}\sigma_i$ is a clique
$\sigma_k\setminus\{k\}$, so it is a face of
$\bigcup_{i=1}^{k-1}\sigma_i$. Therefore, $K$ is obtained by
iteratively attaching $\sigma_k$ to $\bigcup_{i=1}^{k-1}\sigma_i$
along the common face~$\sigma_k\setminus\{k\}$.

We use induction on $m$ to prove the
decomposition~\eqref{chordec}. When $m=1$, both sides
of~\eqref{chordec} are trivial. Now assume that~\eqref{chordec}
holds for $K$ with $<m$ vertices. The pushout
square~\eqref{Kpoagain} for $v=\{m\}$ becomes
\[
  \diagram
         \sigma_m\setminus\{m\}\rto\dto & \sigma_m\dto \\
         K\setminus\{m\}\rto & K.
\enddiagram
\]
According to our convention, $\sigma_m\setminus\{m\}$ and
$K\setminus\{m\}$ are regarded as simplicial complexes
on~$[m]\setminus\{m\}=[m-1]$, while $\sigma_m$ is regarded as a
complex on~$[m]$. The corresponding pushout square~\eqref{FTprelim}
of the polyhedral products becomes
\begin{equation}\label{pushoutm}
  \diagram
         \cyy^{\sigma_m\setminus\{m\}}\times Y_m\rto\dto^{j\times 1}
             & \cyy^{\sigma_m}\dto \\
         \cyy^{K\setminus\{m\}}\times Y_m\rto & \cyy^{K}
   \enddiagram
\end{equation}
As $\sigma_m\setminus\{m\}$ is a face of $K\setminus\{m\}$ and
$\sigma_m$ is a face of $K$, we have
\[
  \cyy^{\sigma_m\setminus\{m\}}=\prod_{i\in\sigma_m\setminus\{m\}}CY_i
  \times \prod_{i\notin\sigma_m}Y_i,
  \quad
  \cyy^{\sigma_m}=\prod_{i\in\sigma_m}CY_i
  \times \prod_{i\notin\sigma_m}Y_i.
\]
Since each $\{i\}$ is a vertex of $K$, the inclusion
$\prod_{i\in\omega}Y_i\to\cyy^K$ is null-homotopic for any
subset $\omega\subseteq[m]$, and the same holds with $K$ replaced
by~$K\setminus\{m\}$. Hence, the map
$j\times1$ in~\eqref{pushoutm} decomposes into the composition
$i_2\circ\pi_2$ of the projection onto the second factor and the
inclusion. It follows that the pushout square~\eqref{pushoutm}
decomposes as
\[
  \diagram
         \prod_{i\notin\sigma_m}Y_i\times Y_m\rto^-{\pi_1}\dto^{\pi_2}
          & \prod_{i\notin\sigma_m}Y_i \dto \\
         Y_m \rto^-\epsilon \dto^{i_2}
          & \bigl(\prod_{i\notin\sigma_m}Y_i\bigr)\ast Y_m \dto\\
         \cyy^{K\setminus\{m\}}\times Y_m\rto & \cyy^{K}
  \enddiagram
\]
where the map $\epsilon$ is null-homotopic. From the bottom
pushout square we obtain
\[
  \cyy^K\simeq\Bigl(\cyy^{K\setminus\{m\}}\rtimes Y_m\Bigr)\vee
  \Bigl(\bigr(\prod_{i\notin\sigma_m}Y_i\bigl)\ast Y_m\Bigr),
\]
where $X\rtimes Y=X\times Y/(*\times Y)$ is the right half-smash
product, which is homotopy equivalent to $X\vee(X\wedge Y)$ when
$X$ is a suspension. By the inductive hypothesis,
$\cyy^{K\setminus\{m\}}$ is a suspension, so we can rewrite the
identity above as
\[
  \cyy^K\simeq\cyy^{K\setminus\{m\}}\vee
  \bigl(\cyy^{K\setminus\{m\}}\wedge Y_m\bigr)\vee
  \Bigl(\mathop{\bigvee_{1\le i_1<\cdots<i_k\le m-1}}
  \limits_{\{i_j,m\}\notin K}
  \Sigma Y_{i_1}\wedge\cdots\wedge Y_{i_k}\wedge Y_m\Bigr),
\]
Now a simple counting argument together with the inductive
hypothesis gives~\eqref{chordec}. This also proves the
implication (a)$\Rightarrow$(b).

To prove the implication (b)$\Rightarrow$(a), assume that
$K^1$ is not chordal. Choose an induced chordless cycle $K_\omega$
with $|\omega|\ge4$ (i.\,e. a full subcomplex isomorphic to the
boundary of an $|\omega|$-gon). Then there is a nontrivial product
in the cohomology ring $H^*(\cyy^{K_\omega};\mathbb Z)$. (When
$(\underline{CY},\underline{Y})=(D^1,S^0)$, the polyhedral product
$(D^1,S^0)^{K_\omega}$ is an orientable surface of positive
genus~\cite[Example~6.40]{BP1}; the general case then follows
from~\cite[Theorem~1.9]{BBCG2}). By Lemma~\ref{fullsubcomplex},
the same nontrivial product appears in $H^*(\cyy^{K};\mathbb Z)$.
Thus, $\cyy^K$ is not a co-$H$-space.
\end{proof}

\begin{remark}
Theorem~\ref{theo:chordec} implies that the wedge decomposition of
$\Sigma\cyyk$ of~\cite{BBCG} desuspends when $K$ is flag and $K^1$
is chordal; this also follows from the results of Iriye and
Kishimoto~\cite[Theorem~1.2, Proposition~3.2]{IK}. Other classes
of simplicial complexes $K$ with this property are described
in~\cite{IK} and~\cite{GT3}. The novelty of Theorem~\ref{theo:chordec} 
compared to~\cite{IK} is the description of the wedge decomposition 
of $\cyyk$ in terms of the degree zero cohomology of full subcomplexes of~$K$, 
which does not follow readily from desuspending the decomposition 
in~\cite{BBCG}.

When $K$ is not flag, the implication (b)$\Rightarrow$(a) of
Theorem~\ref{theo:chordec} still holds, but (a)$\Rightarrow$(b)
fails. Indeed one can take $K$ to be the boundary of a
\emph{cyclic polytope}~\cite[Example~1.1.17]{BP2} of dimension
$n\ge4$ with $m>n+1$ vertices. Then $K^1$ is a complete graph on
$m$ vertices, so it is chordal. On the other hand, $\mathcal
Z_K=(D^2,S^1)^K$ is an $(m+n)$-manifold with nontrivial cohomology
product, so it cannot be a co-$H$-space.
\end{remark}

Finally, we give conditions that allow for a delooping of the maps in Theorem~\ref{main}.

\begin{corollary}
   \label{chordal}
   Let $K$ be a simplicial complex on the vertex set $[m]$ whose $1$-skeleton
   is a chordal graph. If $K^{f}$ is the flagification of $K$ then the map
   \(f\colon\namedright{\cyy^{K}}{}{\cyy^{K^{f}}}\)
   has a right homotopy inverse.~$\qqed$
\end{corollary}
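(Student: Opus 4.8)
The plan is to deduce this directly from Proposition~\ref{delooping} together with Theorem~\ref{theo:chordec}. Proposition~\ref{delooping} already shows that $f$ has a right homotopy inverse as soon as $\cyy^{K^{f}}$ is homotopy equivalent to a co-$H$-space, so the entire task is to verify that this co-$H$ condition holds under the chordality hypothesis on $K$. The one point requiring care is that $K$ itself need not be flag, so Theorem~\ref{theo:chordec} cannot be applied to $K$; it must be applied to the target complex $K^{f}$, which \emph{is} flag.

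First I would record the elementary combinatorial fact that flagification leaves the $1$-skeleton unchanged: since $K$ is on the vertex set $[m]$ and $K^{f}$ is built from the graph $K^{1}$ by filling in every clique with a simplex, one has $(K^{f})^{1}=K^{1}$. Hence the $1$-skeleton of $K^{f}$ is chordal by hypothesis. Now $K^{f}$ is a flag complex with chordal $1$-skeleton, so the implication (a)$\Rightarrow$(b) of Theorem~\ref{theo:chordec} applies and $\cyy^{K^{f}}$ is a co-$H$-space; indeed the decomposition~\eqref{chordec} exhibits it as a wedge of suspensions. (No hypothesis on $\widetilde H^{*}(Y_{i})$ is needed for this direction, since a wedge of suspensions is a co-$H$-space irrespective of which of its summands happen to be contractible.)

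With the co-$H$ property of $\cyy^{K^{f}}$ in hand, Proposition~\ref{delooping} --- or, unwinding it, Theorem~\ref{main} applied with $\uxa=\cyy$ to obtain a right homotopy inverse of $\Omega f$, followed by Lemma~\ref{delooplemma} --- immediately yields a right homotopy inverse of $f$. I do not expect any real obstacle here: all of the substantive homotopy theory is already contained in Theorem~\ref{main}, Lemma~\ref{delooplemma}/Proposition~\ref{delooping}, and Theorem~\ref{theo:chordec}, and the only genuinely new input is the trivial observation that chordality of $K^{1}$ transfers to $(K^{f})^{1}$, so that the co-$H$ characterisation may be invoked for the flag complex $K^{f}$ even when $K$ is not flag.
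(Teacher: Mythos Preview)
Your proposal is correct and follows essentially the same route as the paper: observe that $(K^{f})^{1}=K^{1}$ so that $K^{f}$ is flag with chordal $1$-skeleton, apply Theorem~\ref{theo:chordec} to conclude $\cyy^{K^{f}}$ is a co-$H$-space, and then invoke Proposition~\ref{delooping}. Your remark that the hypothesis $\widetilde H^{*}(Y_{i})\neq 0$ is only needed for the implication (b)$\Rightarrow$(a) is a useful clarification that the paper leaves implicit.
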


\begin{proof}
As $K$ and $K^f$ have the same $1$-skeleton,
Theorem~\ref{theo:chordec} implies that $\cyy^{K^{f}}$ is a
co-$H$-space (and even a suspension). The result follows from
Proposition~\ref{delooping}.
\end{proof}

\begin{corollary}
   \label{chordal1}
   Let $K$ be a flag simplicial complex on the vertex set $[m]$, and let $L$ be
   the simplicial complex given by $m$ disjoint points.  The map
   \(h\colon\namedright{\cyy^{L}}{}{\cyy^{K}}\)
   has a right homotopy inverse if and only if the $1$-skeleton of $K$ is a chordal graph.
\end{corollary}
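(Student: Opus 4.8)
The plan is to deduce the statement from Theorem~\ref{main}, Theorem~\ref{theo:chordec} and the delooping lemma (Lemma~\ref{delooplemma}), treating the two implications separately.

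For the ``if'' direction, suppose the $1$-skeleton of $K$ is chordal. Since $K$ is flag it equals its own flagification, so $K^{f}=K$. Taking $\uxa=\cyy$ in Theorem~\ref{main}, the second simplicial map in $\nameddright{L}{}{K}{}{K^{f}}$ is the identity, hence $f\colon\cyy^{K}\to\cyy^{K^{f}}$ is the identity and the composite $h=f\circ g$ is just $g\colon\cyy^{L}\to\cyy^{K}$. Theorem~\ref{main}(b) then gives that $\Omega h$ has a right homotopy inverse. On the other hand, $K$ being flag with chordal $1$-skeleton, the implication (a)$\Rightarrow$(b) of Theorem~\ref{theo:chordec} shows that $\cyy^{K}$ is a co-$H$-space. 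Applying Lemma~\ref{delooplemma} with $A=\cyy^{L}$, $B=\cyy^{K}$ and the map $h$ --- both spaces have the homotopy type of $CW$-complexes, being polyhedral products, and $B$ is a co-$H$-space --- we conclude that $h$ has a right homotopy inverse. This is the same mechanism as in Proposition~\ref{delooping} and Corollary~\ref{chordal}, applied to the map out of the wedge rather than the map into the flagification.

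For the ``only if'' direction I would argue by contraposition: assume the $1$-skeleton of $K$ is not chordal and deduce that $h$ has no right homotopy inverse. First, $\cyy^{L}$ is a co-$H$-space: the complex $L$ is flag and its $1$-skeleton is edgeless, hence has no cycles and is chordal, so Theorem~\ref{theo:chordec} applies to $L$ and in fact exhibits $\cyy^{L}$ as a wedge of suspensions. Second, as in the proof of (b)$\Rightarrow$(a) of Theorem~\ref{theo:chordec}, pick an induced chordless cycle $K_{\omega}$ with $|\omega|\ge 4$; then $H^{*}(\cyy^{K_{\omega}};\mathbb{Z})$ carries a nontrivial product of positive-degree classes, and since $K_{\omega}$ is a full subcomplex of $K$, Lemma~\ref{fullsubcomplex} makes $\cyy^{K_{\omega}}$ a retract of $\cyy^{K}$, so this nontrivial product survives in $H^{*}(\cyy^{K};\mathbb{Z})$; in particular $\cyy^{K}$ is not a co-$H$-space. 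But a right homotopy inverse of $h$ would make $\cyy^{K}$ a retract of $\cyy^{L}$, hence a co-$H$-space, because a retract of a co-$H$-space is a co-$H$-space (if $r\circ i\simeq\mathrm{id}_{B}$ and $\nu$ is a comultiplication on $A$, then $(r\vee r)\circ\nu\circ i$ is one on $B$). This contradiction shows $h$ has no right homotopy inverse.

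Once Theorems~\ref{main} and~\ref{theo:chordec} are in hand the argument is essentially bookkeeping; the one input that is not purely formal is the observation that $\cyy^{L}$ is a co-$H$-space, which is precisely what converts the non-co-$H$-ness of $\cyy^{K}$ into an obstruction to the retraction, so I do not anticipate a serious difficulty. If one prefers to avoid the retract-of-a-co-$H$-space step, one can instead note that a right homotopy inverse of $h$ makes $h^{*}\colon H^{*}(\cyy^{K};\mathbb{Z})\to H^{*}(\cyy^{L};\mathbb{Z})$ a split-injective ring homomorphism, so a nontrivial product of positive-degree classes in $H^{*}(\cyy^{K};\mathbb{Z})$ would yield one in $H^{*}(\cyy^{L};\mathbb{Z})$, which is impossible since $\cyy^{L}$ is a wedge of suspensions.
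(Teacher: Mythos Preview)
Your proof is correct and follows essentially the same approach as the paper: for the ``if'' direction you combine Theorem~\ref{main} with Theorem~\ref{theo:chordec} and Lemma~\ref{delooplemma}, and for the ``only if'' direction you use that a retract of a co-$H$-space is a co-$H$-space together with the (b)$\Rightarrow$(a) direction of Theorem~\ref{theo:chordec}. The paper phrases the ``only if'' direction directly rather than contrapositively and simply asserts that $\cyy^{L}$ is a co-$H$-space, but the content is the same.
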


\begin{proof}
Assume that $K^1$ is a chordal graph. As $K$ is flag, Theorem~\ref{main}
implies that $\Omega h$ has a right homotopy inverse, and
Theorem~\ref{theo:chordec} implies that $\cyy^{K}$ is a
co-$H$-space. Then $h$ has a right homotopy inverse by
Lemma~\ref{delooplemma}.

Now assume that $h$ has a right homotopy inverse. Then $\cyy^{K}$ is a
co-$H$-space, being a retract of the co-$H$-space $\cyy^{L}$.
Theorem~\ref{theo:chordec} implies that $K^1$ is a chordal graph.
\end{proof}

\begin{remark}
Given
\(\nameddright{\cyy^{L}}{g}{\cyy^{K}}{f}{\cyy^{K^{f}}}\),
Theorem~\ref{main} states that each of the two maps $\Omega f$
and $\Omega h=\Omega f\circ\Omega g$ has a right homotopy inverse.
Corollary~\ref{chordal} gives a sufficient condition for a delooping of
the first map, and Corollary~\ref{chordal1} gives a necessary and
sufficient condition for a delooping of the second map.
In both cases the condition is that $K^1$ is a chordal graph. However,
this condition is obviously not necessary for a delooping of $\Omega f$. 
Indeed, $f$ has a right inverse for any flag~$K$, not only for those with chordal~$K^1$,
because in this case $K^{f}=K$ and $f$ is the identity map.
\end{remark}


\section{Whitehead products}
\label{sec:whitehead}

In this section we describe two ways of relating the results of Theorem~\ref{main} and
Theorem~\ref{theo:chordec} to the classical iterated Whitehead products. First, we consider polyhedral products of the form $\ux^{K}$ with flag $K$ whose $1$-skeleton is a chordal graph, and obtain a generalisation (Proposition~\ref{Whitehead}) of Porter's description of the homotopy fiber of the inclusion of an $m$-fold wedge into a product in terms of Whitehead brackets.
Second, we consider the loop space $\Omega(\underline{S},\underline{\ast})^{K}$ on a polyhedral product of spheres for an arbitrary flag complex~$K$, and obtain a generalisation (Proposition~\ref{HMS}) of the Hilton--Milnor Theorem.

First, specialize to the case when each pair $(X_{i},A_{i})$ is of the form $(X_{i},\ast)$
and write $\ux$ for $\uxa$. By~\cite{GT1}, for example, there is a homotopy
fibration
\[\nameddright{\clxx^{K}}{\gamma_{K}}{\ux^{K}}{}{\prod_{i=1}^{m} X_{i}}\]
for any simplicial complex $K$. This is natural for simplicial inclusions, so if $K$
is a flag complex on the vertex set $[m]$ and
\(\namedright{L}{}{K}\)
is the inclusion of the vertex set then there is a homotopy fibration diagram
\begin{equation}
  \label{LKwh}
  \diagram
       \clxx^{L}\rto^-{\gamma_{L}}\dto^{h'} & \ux^{L}\rto\dto^{h}
           & \prod_{i=1}^{m} X_{i}\ddouble \\
       \clxx^{K}\rto^-{\gamma_{K}} & \ux^{K}\rto & \prod_{i=1}^{m} X_{i}
  \enddiagram
\end{equation}
where both $h$ and $h'$ are induced maps of polyhedral products. By
Theorem~\ref{main}, $\Omega h'$ has a right homotopy inverse. Further,
if $K^{1}$ is a chordal graph then Proposition~\ref{delooping} and Theorem~\ref{theo:chordec} imply that $h'$ has a right homotopy inverse.

Observe that as $L$ is $m$ disjoint points we have $\ux^{L}=X_{1}\vee\cdots\vee X_{m}$,
implying that $\clxx^{L}$ is the homotopy fibre of the inclusion of the wedge
into the product. Porter~\cite{P} identified the homotopy type of this fibre, from
which we obtain a homotopy equivalence
\begin{equation}
   \label{Porterdecomp}
   \clxx^{L}\simeq\bigvee_{k=2}^{m}\ \ \bigvee_{1\leq i_{1}<\cdots<i_{k}\leq m}
   (\Sigma\Omega X_{i_{1}}\wedge\cdots\wedge\Omega X_{i_{k}})^{\vee (k-1)}.
\end{equation}
Notice that $L^{1}$ is a chordal graph and the decomposition in~(\ref{Porterdecomp})
exactly matches that of $\clxx^{L}$ in~(\ref{chordec}). Moreover,
by~\cite[Theorem 6.2]{T}, Porter's homotopy type identification can be chosen so
that the composite
\[\varphi_{L}\colon\nameddright{\bigvee_{k=2}^{m}\ \
   \bigvee_{1\leq i_{1}<\cdots<i_{k}\leq m}
   (\Sigma\Omega X_{i_{1}}\wedge\cdots\wedge\Omega X_{i_{k}})^{\vee (k-1)}}
    {\simeq}{\clxx^{L}}{\gamma_{L}}{\ux^{L}}\]
is a wedge sum of iterated Whitehead products of the maps
\[ev_{i}\colon\namedright{\Sigma\Omega X_{i}}{ev}{X_{i}}\hookrightarrow
      X_{1}\vee\cdots\vee X_{m}=\ux^{L}.\]
Returning to~(\ref{LKwh}), the naturality of the Whitehead product implies
that $h\circ\varphi_{L}$ is a wedge sum of Whitehead products mapping
into $\ux^{K}$. The right homotopy inverse for $h'$
when $K^{1}$ is a chordal graph therefore implies the following.

\begin{proposition}
   \label{Whitehead}
   Let $K$ be a flag complex such that $K^{1}$ is a chordal graph. Then the map
   \(\namedright{\clxx^{K}}{\gamma_{K}}{\ux^{K}}\)
   factors through a wedge sum of Whitehead products.~$\qqed$
\end{proposition}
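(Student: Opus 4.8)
The plan is simply to assemble the pieces already put in place in the discussion preceding the statement. Write $W$ for the wedge appearing on the left of~(\ref{Porterdecomp}),
\[
  W=\bigvee_{k=2}^{m}\ \ \bigvee_{1\leq i_{1}<\cdots<i_{k}\leq m}
   (\Sigma\Omega X_{i_{1}}\wedge\cdots\wedge\Omega X_{i_{k}})^{\vee (k-1)},
\]
and let $e\colon\namedright{W}{\simeq}{\clxx^{L}}$ be Porter's homotopy equivalence, so that $\varphi_{L}=\gamma_{L}\circ e$. By~\cite[Theorem 6.2]{T} the map $\varphi_{L}$ is a wedge sum of iterated Whitehead products of the maps $ev_{i}$, and, as noted just above, the naturality of the Whitehead product makes $h\circ\varphi_{L}$ a wedge sum of iterated Whitehead products mapping into $\ux^{K}$.

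The first step is to exploit the hypothesis that $K^{1}$ is chordal: since $K$ is also flag, Theorem~\ref{main}, Proposition~\ref{delooping} and Theorem~\ref{theo:chordec} together produce a right homotopy inverse $s'\colon\namedright{\clxx^{K}}{}{\clxx^{L}}$ for the map $h'$ of~(\ref{LKwh}), so that $h'\circ s'\simeq 1$. The second step is to feed this into the homotopy commutative left-hand square of~(\ref{LKwh}), which states that $\gamma_{K}\circ h'\simeq h\circ\gamma_{L}$. Precomposing with $s'$ then gives
\[
  \gamma_{K}\simeq\gamma_{K}\circ h'\circ s'\simeq h\circ\gamma_{L}\circ s'
  \simeq h\circ\varphi_{L}\circ e^{-1}\circ s',
\]
where $e^{-1}$ is a homotopy inverse of $e$. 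Hence $\gamma_{K}$ factors as
\(\nameddright{\clxx^{K}}{e^{-1}\circ s'}{W}{h\circ\varphi_{L}}{\ux^{K}}\),
and since the second map $h\circ\varphi_{L}$ is a wedge sum of iterated Whitehead products, this is exactly the desired factorisation.

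Every ingredient here has already been established — the natural fibration sequence together with the diagram~(\ref{LKwh}), the right homotopy inverse for $h'$ coming from Theorem~\ref{main} and the co-$H$-structure on $\clxx^{K}$, Porter's decomposition of $\clxx^{L}$, and the naturality of the Whitehead product — so I do not expect a genuine obstacle. The only points requiring care are the bookkeeping of the homotopies and confirming that the left-hand square of~(\ref{LKwh}) commutes up to homotopy; the latter holds because the square is induced by applying the natural fibration $\namedright{\clxx^{K}}{\gamma_{K}}{\ux^{K}}$ to the simplicial inclusion $\namedright{L}{}{K}$.
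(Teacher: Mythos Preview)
Your proposal is correct and follows exactly the approach the paper takes: the paper's proof is the paragraph immediately preceding the proposition (which ends with $\qqed$), and you have simply written out in detail the chain of homotopies $\gamma_{K}\simeq\gamma_{K}\circ h'\circ s'\simeq h\circ\gamma_{L}\circ s'\simeq (h\circ\varphi_{L})\circ(e^{-1}\circ s')$ that the paper leaves implicit. The only minor remark is that the precise reference for the right homotopy inverse of $h'$ is Corollary~\ref{chordal1} (equivalently Theorem~\ref{main}(b) plus Theorem~\ref{theo:chordec} plus Lemma~\ref{delooplemma}), rather than Proposition~\ref{delooping}, which concerns the map $K\to K^{f}$; but the paper itself is equally loose on this point.
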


In the case when $\ux^{K}= (\cpinf,\ast)^{K}=DJ(K)$ and $\clxx^{K}\simeq(D^{2},S^{1})^{K}=\zk$ the result above follows from~\cite[Theorem~4.3]{GPTW}, where the Whitehead products were explicitly specified as iterated brackets of the canonical generators.

Theorem~\ref{main} also leads to a generalization of the Hilton-Milnor Theorem.
In this case we specialize to pairs $(\Sigma X_{i},\ast)$, giving
$(\underline{\Sigma X},\underline{\ast})^{L}=\Sigma X_{1}\vee\cdots\vee\Sigma X_{m}$.
The Hilton-Milnor Theorem states that there is a homotopy equivalence
\begin{equation}
  \label{HM}
  \Omega(\Sigma X_{1}\vee\cdots\vee\Sigma X_{m})\simeq
     \prod_{\alpha\in L\langle V\rangle}
     \Omega(\Sigma X_{1}^{\wedge\alpha_{1}}\wedge\cdots\wedge X_{m}^{\wedge\alpha_{m}})
\end{equation}
where: $V$ is a free $\mathbb{Z}$-module on $m$ elements $x_{1},\ldots,x_{m}$;
$L\langle V\rangle$ is the free Lie algebra on $V$; $\alpha$ runs over
a $\mathbb{Z}$-module basis of $L\langle V\rangle$;
and $\alpha_{i}$ is the number of occurances of $x_{i}$ in the bracket $\alpha$.
Here, if $\alpha_{i}=0$ we interpret $X_{i}$ as being omitted from the smash
product rather than as being trivial. For example,
$X_{1}^{\wedge 2}\wedge X_{2}^{0}=X_{1}^{\wedge 2}$. The Hilton-Milnor
Theorem also describes the maps from the factors on the right side of~(\ref{HM})
into $\Omega(\Sigma X_{1}\vee\cdots\vee\Sigma X_{m})$. If the length
of $\alpha$ is $1$ then the relevant factor is $\Omega\Sigma X_{i}$ for some $i$
and the map
\(\namedright{\Omega\Sigma X_{i}}{}{\Omega(\Sigma X_{1}\vee\cdots\vee\Sigma X_{m})}\)
is the loops on the inclusion into the wedge. If the length of $\alpha$ is larger
than $1$ then the map
\(\namedright{\Omega(\Sigma X_{1}^{\wedge\alpha_{1}}\wedge\cdots\wedge
      X_{m}^{\wedge\alpha_{m}})}{}{\Omega(\Sigma X_{1}\vee\cdots\vee\Sigma X_{m})}\)
is the loops on the Whitehead product corresponding to the bracket $\alpha$.

By Theorem~\ref{main}, if $K$ is a flag complex on the vertex set $[m]$ then
the map
\(\namedright{\Omega(\underline{\Sigma X},\underline{\ast})^{L}}{h}
        {\Omega(\underline{\Sigma X},\underline{\ast})^{K}}\)
has a right homotopy inverse. In particular,
$\Omega(\underline{\Sigma X},\underline{\ast})^{K}$
is a retract of the product on the right side of~(\ref{HM}). It is probably the
case that the retraction consists of selecting an appropriate subproduct,
but this is not immediately clear. That is, simply knowing that
$\Omega h$ has a right homotopy inverse leaves open the possibility that
some of the factors
$\Omega(\Sigma X_{1}^{\wedge\alpha_{1}}\wedge\cdots\wedge X_{m}^{\wedge\alpha_{m}})$
split as $A\times B$ where $A$ retracts off
$\Omega(\underline{\Sigma X},\underline{\ast})^{K}$
while $B$ does not. However, if we specialize a bit more then this possibility
is essentially eliminated.

Suppose that each $X_{i}$ is a connected sphere $S^{n_{i}-1}$ and
write $(\underline{S},\underline{\ast})$ for
$(\underline{\Sigma X},\underline{\ast})$. Since each $X_{i}$ is a sphere, the space
$\Sigma X_{1}^{\wedge\alpha_{1}}\wedge\cdots\wedge X_{m}^{\wedge\alpha_{m}}$
is homotopy equivalent to a sphere, so the right side of~(\ref{HM}) becomes
a product of looped spheres. The space $\Omega S^{n}$ is indecomposable
unless $n\in\{2,4,8\}$. In the latter case, we have a homotopy equivalence $\Omega H\times E\colon\namedright{\Omega S^{2n-1}\times S^{n-1}} {\simeq} {\Omega S^n}$, which is a product of the looped Hopf map $H$ and the suspension map~$E$. The retraction of
$\Omega(\underline{S},\underline{\ast})^{K}$ off $\Omega(\underline{S},\underline{\ast})^{L}$
implies the following.

\begin{proposition}
   \label{HMS}
   Let $K$ be a flag complex. Then
   \[\Omega(\underline{S},\underline{\ast})^{K}\simeq\left(\prod_{i=1}^{m}\Omega S^{n_{i}}\right)\times M\]
   where $M$ is homotopy equivalent to a product of spheres and loops on spheres. Further,
   \begin{itemize}
   \item[(a)] a factor $\Omega S^{n}$ of $M$ with $n\notin\{3,7,15\}$ maps to
   $\Omega(\underline{S},\underline{\ast})^{K}$ by a looped Whitehead product
   \(\namedright{\Omega S^{n}}{\Omega w}{\Omega(\underline{S},\underline{\ast})^{K}}\);

   \item[(b)] a factor $\Omega S^{2n-1}$ of $M$ with $n\in\{2,4,8\}$ maps to
   $\Omega(\underline{S},\underline{\ast})^{K}$ by a looped Whitehead product
   \(\namedright{\Omega S^{2n-1}}{\Omega w}{\Omega(\underline{S},\underline{\ast})^{K}}\)
  or by a composite \(\nameddright{\Omega S^{2n-1}}{\Omega H}{\Omega S^{n}}{\Omega w}
        {\Omega(\underline{S},\underline{\ast})^{K}}\),
   where $H$ is the Hopf map;

   \item[(c)]
   a factor $S^{n-1}$ of $M$ has $n\in\{2,4,8\}$ and maps
   to $\Omega(\underline{S},\underline{\ast})^{K}$ by a composite
   \(\nameddright{S^{n-1}}{E}{\Omega S^{n}}{\Omega w}
        {\Omega(\underline{S},\underline{\ast})^{K}}\),
   where $E$ is the suspension map and $w$ is a Whitehead product.~$\qqed$
   \end{itemize}
\end{proposition}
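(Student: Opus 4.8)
The plan is to combine Theorem~\ref{main}, the Hilton--Milnor Theorem, and the Hopf invariant one splittings; the real content is to promote the retraction of $\Omega(\underline{S},\underline{\ast})^{K}$ off $\Omega(\underline{S},\underline{\ast})^{L}$ into an honest product decomposition. As usual we may assume $K$ has no ghost vertices. By Theorem~\ref{main} the looped inclusion $\Omega(\underline{S},\underline{\ast})^{L}\to\Omega(\underline{S},\underline{\ast})^{K}$ has a right homotopy inverse, so $\Omega(\underline{S},\underline{\ast})^{K}$ is a homotopy retract of $\Omega(\underline{S},\underline{\ast})^{L}=\Omega(S^{n_{1}}\vee\cdots\vee S^{n_{m}})$. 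First I would apply the Hilton--Milnor equivalence~(\ref{HM}): since every smash of spheres is a sphere, it presents $\Omega(\underline{S},\underline{\ast})^{L}$ as a finite product $\prod_{\alpha}\Omega S^{d_{\alpha}}$, with the length-one basic products $x_{i}$ contributing the factors $\Omega S^{n_{1}},\ldots,\Omega S^{n_{m}}$ mapped in by loops on the wedge inclusions, and the basic products of length $\ge 2$ contributing factors mapped in by loops on iterated Whitehead products. Then I would replace each factor $\Omega S^{d}$ with $d\in\{2,4,8\}$ by $\Omega S^{2d-1}\times S^{d-1}$ via the equivalence $\Omega H\times E$. This writes $\Omega(\underline{S},\underline{\ast})^{L}$ as a finite product of ``atoms'', each of the form $\Omega S^{k}$ with $k\notin\{2,4,8\}$ or $S^{k-1}$ with $k\in\{2,4,8\}$ --- hence indecomposable --- and, composing with the looped inclusion and using naturality of Whitehead products, each atom maps to $\Omega(\underline{S},\underline{\ast})^{K}$ by one of $\Omega w$, $\Omega w\circ\Omega H$ or $\Omega w\circ E$ for an iterated Whitehead product $w$.

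Next I would split off $\prod_{i=1}^{m}\Omega S^{n_{i}}$. Each singleton $\{i\}$ is a full subcomplex of $K$, so Lemma~\ref{fullsubcomplex} gives retractions $r_{i}\colon(\underline{S},\underline{\ast})^{K}\to S^{n_{i}}$, and the map $s\colon\prod_{i}\Omega S^{n_{i}}\to\Omega(\underline{S},\underline{\ast})^{K}$, $s(a_{1},\dots,a_{m})=\Omega\iota_{1}(a_{1})\cdots\Omega\iota_{m}(a_{m})$, built from the wedge inclusions $\iota_{i}\colon S^{n_{i}}\to(\underline{S},\underline{\ast})^{K}$ and loop multiplication, is a section of $(\Omega r_{1},\dots,\Omega r_{m})$. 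As $\Omega(\underline{S},\underline{\ast})^{K}$ is a loop space, the general splitting principle stated before Lemma~\ref{2section} gives $\Omega(\underline{S},\underline{\ast})^{K}\simeq\bigl(\prod_{i=1}^{m}\Omega S^{n_{i}}\bigr)\times M$ with $M$ the homotopy fibre of $(\Omega r_{1},\dots,\Omega r_{m})$; and $M$, like $\Omega(\underline{S},\underline{\ast})^{K}$ itself, is a homotopy retract of the product of atoms.

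The crux is to show that $M$, a homotopy retract of a finite product of the indecomposable spaces above, is homotopy equivalent to a sub-product of them --- equivalently, to split the homotopy idempotent which the retraction induces on that product and identify its image. I would prove this by a Krull--Schmidt/unique-factorization argument: all spaces in question are nilpotent of finite type and are $H$-spaces (each atom is $\Omega S^{k}$ or one of $S^{1},S^{3},S^{7}$), and working rationally and at each prime the homology of a product of atoms is a tensor product of primitively generated connected finite-type Hopf algebras on which the idempotent respects the Hopf structure, so a Borel-type classification over a field identifies the localized image with a sub-product; one then assembles $M\simeq\prod_{j\in S}A_{j}$ and checks it on homology, all spaces in question having the homotopy type of $CW$-complexes. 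Dealing with the non-simply-connected atom $S^{1}$ and reconciling the prime-by-prime and rational information is the step I expect to be most delicate. Since each $\Omega S^{n_{i}}$ retracts off $\Omega(\underline{S},\underline{\ast})^{K}$, every atom underlying some $\Omega S^{n_{i}}$ must appear in $\prod_{j\in S}A_{j}$, which is exactly how the factor $\prod_{i}\Omega S^{n_{i}}$ is separated cleanly. Finally, (a)--(c) follow by bookkeeping over the possible dimensions: an atom $\Omega S^{n}$ with $n\notin\{3,7,15\}$ can only come directly from a basic product, so maps in by $\Omega w$; an atom $\Omega S^{2n-1}$ with $n\in\{2,4,8\}$ comes either directly or as the $\Omega H$-summand of a split $\Omega S^{n}$-factor, giving $\Omega w$ or $\Omega w\circ\Omega H$; and a sphere atom $S^{n-1}$ with $n\in\{2,4,8\}$ occurs only as the $E$-summand of such a split, giving $\Omega w\circ E$.
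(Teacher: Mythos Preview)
Your plan is essentially the paper's own argument: the paper's ``proof'' is the short paragraph immediately preceding the proposition (note the $\qqed$), which invokes Theorem~\ref{main} for the retraction off $\Omega(\underline{S},\underline{\ast})^{L}$, Hilton--Milnor to write the latter as a product of looped spheres, the Hopf-invariant-one splittings $\Omega S^{n}\simeq\Omega S^{2n-1}\times S^{n-1}$ for $n\in\{2,4,8\}$ to reach indecomposable factors, and then simply asserts that the retraction gives the result; your Krull--Schmidt sketch and your explicit handling of the $\prod_{i}\Omega S^{n_{i}}$ factor via Lemma~\ref{fullsubcomplex} are more detail than the paper itself supplies. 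One slip to fix: the Hilton--Milnor decomposition is not a finite product but a weak infinite one (only finitely many factors below any given connectivity), so your unique-factorization step must be carried out degree by degree rather than for a finite product as you state.
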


Refining a bit, by~\cite{GT1} the homotopy fibration
\(\nameddright{(\underline{C\Omega S},\underline{\Omega S})^{K}}{\gamma_{K}}
      {(\underline{S},\underline{\ast})^{K}}{}{\prod_{i=1}^{m} S^{n_{i}}}\)
splits after looping to give a homotopy equivalence
\[\Omega(\underline{S},\underline{\ast})^{K}\simeq(\prod_{i=1}^{m}\Omega S^{n_{i}})
        \times\Omega(\underline{C\Omega S},\underline{\Omega S})^{K}.\]
Therefore, Proposition~\ref{HMS} implies that if $K$ is a flag complex then
$\Omega(\underline{C\Omega S},\underline{\Omega S})^{K}$ is homotopy
equivalent to a product of spheres and loops on spheres, and under this
homotopy equivalence $\Omega\gamma_{K}$ becomes a product of maps of the from
$\Omega w$, $\Omega w\circ\Omega H$ or $\Omega w\circ E$.

This has implications for moment-angle complexes and Davis-Januszkiewicz
spaces. Recall that $DJ(K)\simeq (\cpinf,\ast)^{K}$ and
$\zk\simeq (D^{2},S^{1})^{K}$. There is a homotopy fibration
\[\nameddright{\zk}{\psi_{K}}{DJ(K)}{}{\prod_{i=1}^{m}\cpinf}\]
which splits after looping to give a homotopy equivalence
\[\Omega DJ(K)\simeq (\prod_{i=1}^{m} S^{1})\times\Omega\zk.\]

The inclusion
\(\namedright{S^{2}}{}{\cpinf}\)
induces maps of pairs
\(\namedright{(S^{2},\ast)}{}{(\cpinf,\ast)}\) and
\(\nameddright{(C\Omega S^{2},\Omega
S^{2})}{}{(C\Omega\cpinf,\Omega\cpinf)}
      {\simeq}{(D^{2},S^{1})}\).
These then induce a commutative diagram of polyhedral products
\begin{equation}
   \label{FGdgrm}
   \diagram
        (C\Omega S^{2},\Omega S^{2})^{K}\rto^-{\gamma_{K}}\dto^{G}
            & (S^{2},\ast)^{K}\dto^{F} \\
        \zk\rto^-{\psi_{K}} & DJ(K).
   \enddiagram
\end{equation}

Observe that the suspension map
\(\namedright{S^{1}}{E}{\Omega S^{2}}\)
induces a map of pairs
\(\namedright{(CS^{1}, S^{1})}{}{(C\Omega S^{2},\Omega S^{2})}\)
with the property that the composite
\(\nameddright{(CS^{1}, S^{1})}{}{(C\Omega S^{2},\Omega S^{2})}{}
     {(D^{2},S^{1})}\)
is a homotopy equivalence. This implies that the map $G$
in~(\ref{FGdgrm}) has a right homotopy inverse. If $K$ is a flag complex
then Proposition~\ref{HMS} says that
$\Omega(C\Omega S^{2},\Omega S^{2})^{K}$
is homotopy equivalent to a product of spheres and loops on spheres,
and the factors map to $\Omega(S^{2},\ast)^{K}$
by maps of the form $\Omega w$, $\Omega w\circ\Omega H$ or
$\Omega w\circ E$. Thus from the map $G$ in~(\ref{FGdgrm})
having a right homotopy inverse, and $F$ being natural with
respect to Whitehead products, we obtain the following.

\begin{corollary}
   \label{zktype}
   Let $K$ be a flag complex. Then $\Omega\zk$ is homotopy equivalent
   to a product of spheres and loops on spheres, and under this equivalence
   the map
   \(\namedright{\Omega\zk}{\Omega\psi_{K}}{\Omega DJ(K)}\)
   becomes a product of maps of the form $\Omega w$, $\Omega w\circ\Omega H$
   or $\Omega w\circ E$ where $w$ is a Whitehead product.~$\qqed$
\end{corollary}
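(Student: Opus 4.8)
The plan is to transport the decomposition of $\Omega(C\Omega S^{2},\Omega S^{2})^{K}$ supplied by Proposition~\ref{HMS} across the square~\eqref{FGdgrm}. Three facts are already in hand. First, since $K$ is flag, the refinement of Proposition~\ref{HMS} recorded above gives a homotopy equivalence $\Omega(C\Omega S^{2},\Omega S^{2})^{K}\simeq P$ with $P$ a product of spheres and loops on spheres, under which $\Omega\gamma_{K}$ becomes a product of maps of the form $\Omega w$, $\Omega w\circ\Omega H$ or $\Omega w\circ E$ into $\Omega(\underline{S},\underline{\ast})^{K}$. Second, the map $G$ of~\eqref{FGdgrm} has a right homotopy inverse $s$ induced by the suspension \(\namedright{S^{1}}{E}{\Omega S^{2}}\), so $\Omega G$ has right homotopy inverse $\Omega s$. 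Third, $\Omega DJ(K)\simeq(\prod_{i=1}^{m}S^{1})\times\Omega\zk$, so it is enough to identify $\Omega\zk$ and the map $\Omega\psi_{K}$. Looping~\eqref{FGdgrm} and using that $\Omega G\circ\Omega s$ is homotopic to the identity gives $\Omega\psi_{K}\simeq\Omega F\circ\Omega\gamma_{K}\circ\Omega s$, so everything will follow once the section $\Omega s$ is understood in terms of the product decomposition of $\Omega(C\Omega S^{2},\Omega S^{2})^{K}$.

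The key claim is that, writing $P=\prod_{\alpha}P_{\alpha}$, the section $\Omega s$ is homotopic to the inclusion of a sub-product $\prod_{\alpha\in S}P_{\alpha}$, with $\Omega G$ the corresponding projection. The reason is naturality: $G$ and $s$ are induced by the map of coefficient pairs \(\namedright{(C\Omega S^{2},\Omega S^{2})}{}{(D^{2},S^{1})}\) (respectively by \(\namedright{S^{1}}{E}{\Omega S^{2}}\)), and the homotopy equivalences out of which the decomposition of $\Omega(C\Omega S^{2},\Omega S^{2})^{K}$ is built --- the splitting of Proposition~\ref{uxasplit}, the fibre identification of Lemma~\ref{Ftype}, and hence the refinement of Proposition~\ref{HMS} --- are all natural for maps of diagrams of the form~\eqref{cubedata}, since they are assembled from the natural constructions of Section~\ref{sec:htpy}; tracking this naturality through the construction pins the image of $\Omega s$ down to a sub-product. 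Granting this, $\Omega\zk\simeq\prod_{\alpha\in S}P_{\alpha}$ is a product of spheres and loops on spheres. Moreover $\Omega\gamma_{K}\circ\Omega s$ then restricts on each $P_{\alpha}$ with $\alpha\in S$ to one of the maps $\Omega w$, $\Omega w\circ\Omega H$ or $\Omega w\circ E$, where $w$ is a Whitehead product of the inclusions \(\namedright{S^{n_{i}}}{}{(\underline{S},\underline{\ast})^{K}}\); since $F$ is natural with respect to Whitehead products, postcomposing with $\Omega F$ turns $\Omega w$ into $\Omega w'$ for the corresponding Whitehead product $w'$ of the composites \(\nameddright{S^{n_{i}}}{}{(\underline{S},\underline{\ast})^{K}}{F}{DJ(K)}\) and leaves the $\Omega H$ and $E$ factors untouched, giving the asserted description of $\Omega\psi_{K}$.

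The step I expect to be the main obstacle is the identification of $\Omega s$ with the inclusion of a sub-product: knowing only that $\Omega\zk$ is a retract of a product of spheres and loops on spheres does not by itself make it one. This must be obtained by genuinely following the naturality of the decomposition through its inductive construction; alternatively, one may use the general result quoted before Lemma~\ref{2section} to split $\Omega(C\Omega S^{2},\Omega S^{2})^{K}\simeq\Omega\zk\times(\text{homotopy fibre of }\Omega G)$ and then invoke a unique-factorization argument, each of the factors $P_{\alpha}$ (simply-connected spheres and loops on odd-dimensional spheres) being indecomposable. Once the retraction is a projection onto a sub-product, the remaining bookkeeping with Whitehead products, the Hopf map and the suspension map is routine.
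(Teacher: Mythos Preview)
Your approach is essentially the same as the paper's. The paper's argument is the single sentence preceding the corollary: from $G$ having a right homotopy inverse and from $F$ being natural with respect to Whitehead products, the result follows from the refinement of Proposition~\ref{HMS} already established for $\Omega(\underline{C\Omega S},\underline{\Omega S})^{K}$. You have reconstructed exactly this line of reasoning.

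Where you go further than the paper is in isolating the one genuine subtlety: knowing only that $\Omega\zk$ is a retract of a product of spheres and loops on spheres does not automatically make it such a product, and the paper does not spell out why the retraction respects the product structure. Your two proposed fixes are both reasonable. The naturality route is in the spirit of the paper, but note that the naturality statements actually proved (Lemma~\ref{Ftype}, Proposition~\ref{uxasplit}) are formulated for inclusions of simplicial complexes, not for maps of coefficient pairs; you would need to observe that the same constructions are natural for maps of pairs $(X_i,A_i)\to(X_i',A_i')$, which does follow from the naturality in Section~\ref{sec:htpy} (everything there is stated for maps of diagrams of the form~\eqref{cubedata}) but is not recorded in the later propositions. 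The unique-factorisation route also works here because in the case at hand every $n_i=2$, so the factors of $M$ coming from Proposition~\ref{HMS} are $\Omega S^n$ with $n\geq 3$ together with $S^3$, $S^7$ and the corresponding looped spheres, all simply connected and indecomposable; one should cite an appropriate unique-factorisation theorem for $H$-spaces rather than leave this implicit. Either way, you have correctly located the only point requiring care, which the paper elides.
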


Notice that $\zk$ itself is often not a product or a wedge of spheres.
For example, if $K$ is the boundary of an $n$-gon for $n\geq 5$
then $K$ is flag and $\zk$ is diffeomorphic to a connected sum of
products of two spheres. Nevertheless, $\Omega\zk$ is homotopy
equivalent to a product of spheres and loops on spheres.

\section{Homotopy theoretic consequences}
\label{sec:apps}

We restrict attention to Davis-Januszkiewicz spaces
$DJ(K)=(\mathbb{C}P^{\infty},\ast)^{K}$ and moment-angle complexes
$\zk=(D^{2},S^{1})^{K}$. Let \(\namedright{S^{2}}{}{\cpinf}\) be
the inclusion of $S^{2}\cong\mathbb{C}P^{1}$ into $\cpinf$. Then
there is an induced map of polyhedral products
\[i_{K}\colon\namedright{(S^{2},\ast)^{K}}{}{(\cpinf,\ast)^{K}}.\]
Building on the fact that the map $G$ in~(\ref{FGdgrm}) has a right homotopy inverse,
in~\cite{GT3} the following was proved.

\begin{lemma}
   \label{S2cpinf}
   The map $\Omega i_{K}$ has a right homotopy inverse.~$\qqed$
\end{lemma}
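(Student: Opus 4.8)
The plan is to obtain this by looping diagram~(\ref{FGdgrm}) and transporting the right homotopy inverse of $G$ across compatible Ganea-type splittings of its two rows. First, I would identify $i_{K}$ with the map $F$ appearing in~(\ref{FGdgrm}): both are induced by the inclusion of pairs $(S^{2},\ast)=(\mathbb{C}P^{1},\ast)\hookrightarrow(\mathbb{C}P^{\infty},\ast)$, and $(\mathbb{C}P^{\infty},\ast)^{K}=DJ(K)$, so it suffices to produce a right homotopy inverse of $\Omega F$. By the discussion of diagram~(\ref{FGdgrm}), the map $G$, and hence $\Omega G$, has a right homotopy inverse; fix one, say \(t_{G}\colon\namedright{\Omega\zk}{}{\Omega(C\Omega S^{2},\Omega S^{2})^{K}}\).

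Next I would recall the two homotopy fibrations from~\cite{GT1}, namely \(\nameddright{(C\Omega S^{2},\Omega S^{2})^{K}}{\gamma_{K}}{(S^{2},\ast)^{K}}{p_{1}}{\prod_{i=1}^{m}S^{2}}\) and \(\nameddright{\zk}{\psi_{K}}{DJ(K)}{p_{2}}{\prod_{i=1}^{m}\mathbb{C}P^{\infty}}\), where $p_{1}$ and $p_{2}$ are the natural maps of the polyhedral products into the corresponding full products. The commutativity of~(\ref{FGdgrm}) says precisely that $F$ is a map between these fibrations, covering $\beta=\prod_{i}(\mathbb{C}P^{1}\hookrightarrow\mathbb{C}P^{\infty})$ on the bases and inducing $G$ on the fibres; in particular $\Omega F\circ\Omega\gamma_{K}\simeq\Omega\psi_{K}\circ\Omega G$. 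For the sections I would \emph{choose} \(s_{1}\colon\namedright{\prod_{i}\Omega S^{2}}{}{\Omega(S^{2},\ast)^{K}}\) to be the product, in the loop multiplication, of the looped vertex inclusions \(\namedright{\Omega S^{2}=\Omega(S^{2},\ast)^{\{i\}}}{}{\Omega(S^{2},\ast)^{K}}\), and similarly $s_{2}$ for $DJ(K)$. Since each coordinate of $\Omega p_{1}$ and $\Omega p_{2}$ is the loop of a map, a direct check gives that $s_{1},s_{2}$ are sections, so by the splitting result quoted before Lemma~\ref{2section} the maps $\Psi_{1}=\mu\circ(\Omega\gamma_{K}\times s_{1})$ and $\Psi_{2}=\mu\circ(\Omega\psi_{K}\times s_{2})$ are homotopy equivalences onto $\Omega(S^{2},\ast)^{K}$ and $\Omega DJ(K)$. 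Moreover, because $\Omega F$ is an $H$-map and, by naturality of polyhedral products, $F$ carries the $i$th vertex inclusion of $(S^{2},\ast)^{K}$ to the $i$th vertex inclusion of $DJ(K)$ precomposed with $\mathbb{C}P^{1}\hookrightarrow\mathbb{C}P^{\infty}$, one obtains $\Omega F\circ s_{1}\simeq s_{2}\circ\Omega\beta$.

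Granting these, the computation is immediate: using that $\Omega F$ is an $H$-map,
\[\Omega F\circ\Psi_{1}\;\simeq\;\mu\circ\bigl((\Omega F\circ\Omega\gamma_{K})\times(\Omega F\circ s_{1})\bigr)\;\simeq\;\mu\circ\bigl((\Omega\psi_{K}\circ\Omega G)\times(s_{2}\circ\Omega\beta)\bigr)\;\simeq\;\Psi_{2}\circ(\Omega G\times\Omega\beta),\]
so up to the equivalences $\Psi_{1},\Psi_{2}$ the map $\Omega F$ \emph{is} the product $\Omega G\times\Omega\beta$. Its first factor has the right homotopy inverse $t_{G}$; its second factor is a product of copies of \(\namedright{\Omega S^{2}}{}{\Omega\mathbb{C}P^{\infty}\simeq S^{1}}\), which up to homotopy is the projection in the James splitting $\Omega S^{2}\simeq S^{1}\times\Omega S^{3}$ and hence has a right homotopy inverse, for instance the suspension map \(\namedright{S^{1}}{E}{\Omega S^{2}}\) composed if necessary with a self-equivalence of $S^{1}$. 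Therefore $\Omega G\times\Omega\beta$, and with it $\Omega F=\Omega i_{K}$, has a right homotopy inverse.

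The part requiring the most care is making the two splittings compatible with $\Omega F$ — that is, checking $\Omega F\circ s_{1}\simeq s_{2}\circ\Omega\beta$ with the sections chosen as above, so that $\Omega F$ genuinely becomes a product map. This is where I would concentrate the effort (keeping track of orders of multiplication and the naturality of the vertex inclusions under $F$); once it is in place, everything else is formal manipulation of $H$-maps together with the already-established splitting for $\Omega G$ and the James splitting of $\Omega S^{2}$.
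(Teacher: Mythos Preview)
Your argument is correct, but note that the paper does not actually supply its own proof of this lemma: it is quoted from~\cite{GT3}, with only the remark that the proof ``build[s] on the fact that the map $G$ in~(\ref{FGdgrm}) has a right homotopy inverse.'' Your approach does exactly that, and is a natural way to flesh out the hint. You use the loop-space splittings of the two fibrations from~\cite{GT1} (which the paper also records), choose sections via multiplied vertex inclusions so that $\Omega F$ becomes the product map $\Omega G\times\Omega\beta$, and then invoke the right inverse of $G$ together with the elementary observation that $S^{1}\xrightarrow{E}\Omega S^{2}\to\Omega\mathbb{C}P^{\infty}\simeq S^{1}$ is an equivalence. The compatibility check $\Omega F\circ s_{1}\simeq s_{2}\circ\Omega\beta$ you flag as the delicate point is indeed the only thing requiring care, and it goes through exactly as you outline: $F$ carries each vertex inclusion to the corresponding vertex inclusion postcomposed with $S^{2}\hookrightarrow\mathbb{C}P^{\infty}$, and $\Omega F$ is a loop map, hence respects the iterated multiplication used to define $s_{1}$ and $s_{2}$. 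So there is no genuine gap; your proof is a valid self-contained justification of a result the paper imports from elsewhere.
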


\begin{lemma}
   \label{conntriv}
   Let $K$ be a flag complex. Suppose that there is a map
   \(h\colon\namedright{(\cpinf,\ast)^{K}}{}{Y}\)
   where~$Y$ is $2$-connected. Then $\Omega h$ is null homotopic. Consequently,
   $h$ induces the zero map on homotopy groups.
\end{lemma}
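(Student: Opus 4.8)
The plan is to funnel every map out of $(\cpinf,\ast)^{K}$ down to a map out of a wedge of $2$-spheres, where the hypothesis that $Y$ is $2$-connected does all the work. First I would assemble the retractions already at hand. By Lemma~\ref{S2cpinf}, $\Omega i_{K}\colon\Omega(S^{2},\ast)^{K}\to\Omega(\cpinf,\ast)^{K}$ has a right homotopy inverse. Letting $L$ denote the $m$ disjoint points and using that $K$ is flag, so that $K^{f}=K$, I would apply Theorem~\ref{main} to the sequence of pairs $(S^{2},\ast)$: the induced map $g\colon(S^{2},\ast)^{L}\to(S^{2},\ast)^{K}$ then has $\Omega g$ with a right homotopy inverse. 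Composing these two right homotopy inverses shows that the map $\Omega(i_{K}\circ g)=\Omega i_{K}\circ\Omega g$ from $\Omega(S^{2},\ast)^{L}$ to $\Omega(\cpinf,\ast)^{K}$ has a right homotopy inverse; call it $r$.

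Next I would observe that $h\circ i_{K}\circ g\colon(S^{2},\ast)^{L}\to Y$ is null homotopic. Indeed $(S^{2},\ast)^{L}=S^{2}\vee\cdots\vee S^{2}$ is a finite wedge, so a map out of it is determined up to homotopy by its restrictions to the wedge summands, and each restriction $S^{2}\to Y$ represents an element of $\pi_{2}(Y)$, which is trivial since $Y$ is $2$-connected. Hence $\Omega(h\circ i_{K}\circ g)=\Omega h\circ\Omega(i_{K}\circ g)$ is null homotopic, and precomposing with $r$ gives $\Omega h\simeq\Omega h\circ\Omega(i_{K}\circ g)\circ r\simeq\ast$.

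The ``consequently'' clause is then immediate: a null homotopic $\Omega h$ induces the zero map on all homotopy groups, and under the natural isomorphism $\pi_{n}(\Omega Z)\cong\pi_{n+1}(Z)$ this is exactly the statement that $h_{\ast}=0$ on $\pi_{j}$ for $j\geq1$; the case $j=0$ is vacuous since both $(\cpinf,\ast)^{K}$ and $Y$ are path-connected.

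The step I expect to be the main obstacle is the first reduction, from $\cpinf$ to $S^{2}$ via Lemma~\ref{S2cpinf}. One cannot argue directly with the wedge $(\cpinf,\ast)^{L}=\cpinf\vee\cdots\vee\cpinf$, since a map $\cpinf\to Y$ into a $2$-connected space need not be null homotopic — for instance there is an essential map $\cpinf\to S^{4}$ that is detected on $H^{4}$ — so the $2$-connectivity of $Y$ only bites once the bottom cells of the relevant polyhedral product are honest $2$-spheres. The flag hypothesis on $K$ enters precisely to make Theorem~\ref{main} supply the right homotopy inverse of $\Omega g$; both hypotheses of the lemma are used, and exactly in these places.
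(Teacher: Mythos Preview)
Your proof is correct and follows essentially the same route as the paper's own argument: reduce from $(\cpinf,\ast)^{K}$ to $(S^{2},\ast)^{K}$ via Lemma~\ref{S2cpinf}, then from $(S^{2},\ast)^{K}$ to $(S^{2},\ast)^{L}=\bigvee S^{2}$ via Theorem~\ref{main} applied with $K^{f}=K$, and finally use the $2$-connectivity of $Y$ to kill the resulting map out of a wedge of $2$-spheres. Your added remark explaining why one cannot work directly with $(\cpinf,\ast)^{L}$ is a nice clarification, but otherwise the two proofs are the same.
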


\begin{proof}
Let $L$ be the simplicial complex on $m$ disjoint points. The simplicial map
\(\namedright{L}{}{K}\)
induces a map of polyhedral products
\(g\colon\namedright{(S^{2},\ast)^{L}}{}{(S^{2},\ast)^{K}}\).
Consider the composite
\[\namedddright{(S^{2},\ast)^{L}}{g}{(S^{2},\ast)^{K}}{i_{K}}{(\cpinf,\ast)^{K}}{h}{Y}.\]
Observe that by the definition of the polyhedral product,
$(S^{2},\ast)^{L}\simeq\bigvee_{i=1}^{m} S^{2}$. Since $Y$ is
$2$-connected, the composite $h\circ i_{K}\circ g$ is therefore
null homotopic. Since $K$ is a flag complex, by
Theorem~\ref{main}, $\Omega g$ has a right homotopy inverse.
Therefore $\Omega h\circ\Omega i_{K}$ is null homotopic. By
Lemma~\ref{S2cpinf}, $\Omega i_{K}$ also has a right homotopy
inverse. Therefore $\Omega h$ is null homotopic.
\end{proof}

For example, let $C$ be the homotopy cofibre of the composite
\[\psi\colon\nameddright{\bigvee_{i=1}^{m} S^{2}}{}{\bigvee_{i=1}^{m}\mathbb{C}P^{\infty}}{}{DJ(K)}\]
where the left map is the wedge of inclusions of the bottom cells and the right map
is the map of polyhedral products induced by including the vertices into $K$. The
description of $\cohlgy{DJ(K);\mathbb{Z}}$ (see, for example~\cite{BP1}) implies that
$C$ is $3$-connected. Therefore Lemma~\ref{conntriv} implies that if $K$ is a flag
complex then the quotient map
\[f\colon\namedright{DJ(K)}{}{C=DJ(K)/(\bigvee_{i=1}^{m} S^{2})}\]
induces the trivial map on homotopy groups.

Lemma~\ref{conntriv} says that if $K$ is a flag complex then the bottom
$2$-spheres in $DJ(K)$ have a great impact on its  homotopy theory. The next
lemma says this much more dramatically in the case of~$\zk$ when~$K^{1}$ is a
chordal graph.

\begin{lemma}
   \label{zkfactor}
   Let $K$ be a flag complex such that $K^{1}$ is a chordal graph. Then
   there is a homotopy commutative diagram
   \[\diagram
         & \bigvee_{i=1}^{m} S^{2}\dto^{\psi} \\
         \zk\rto\urto^{\lambda} & DJ(K)
     \enddiagram\]
   for some map $\lambda$.
\end{lemma}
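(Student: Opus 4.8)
The plan is to assemble the desired factorization by chaining two sections through naturality squares already present in the paper, specializing everything to the pairs $(S^{2},\ast)$. Recall from the discussion following~(\ref{FGdgrm}) that, for any flag complex~$K$, the map
\(G\colon\namedright{(C\Omega S^{2},\Omega S^{2})^{K}}{}{\zk}\)
has a right homotopy inverse
\(s\colon\namedright{\zk}{}{(C\Omega S^{2},\Omega S^{2})^{K}}\)
(this section is constructed there from the suspension map
\(\namedright{S^{1}}{E}{\Omega S^{2}}\)).
Since~(\ref{FGdgrm}) commutes, $F\circ\gamma_{K}\circ s\simeq\psi_{K}\circ G\circ s\simeq\psi_{K}$, where $\psi_{K}$ denotes the (natural) unlabelled map $\zk\to DJ(K)$ of the statement. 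Hence $\psi_{K}$ already factors through
\(\gamma_{K}\colon\namedright{(C\Omega S^{2},\Omega S^{2})^{K}}{}{(S^{2},\ast)^{K}}\),
and it remains to push this factorization one step further, through the wedge $(S^{2},\ast)^{L}=\bigvee_{i=1}^{m}S^{2}$.

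First I would note that the map $\psi$, defined as the composite
\(\nameddright{\bigvee_{i=1}^{m}S^{2}}{}{\bigvee_{i=1}^{m}\cpinf}{}{DJ(K)}\),
equals the composite
\(\nameddright{(S^{2},\ast)^{L}}{g}{(S^{2},\ast)^{K}}{F}{DJ(K)}\),
where $g$ is the map induced by the vertex inclusion $L\to K$ and $F$ is the vertical map of~(\ref{FGdgrm}); this is immediate from the bifunctoriality of the polyhedral product in the inclusion of pairs $(S^{2},\ast)\to(\cpinf,\ast)$ and in the simplicial inclusion $L\to K$. So it now suffices to produce a map
\(\lambda\colon\namedright{\zk}{}{(S^{2},\ast)^{L}}\)
with $g\circ\lambda\simeq\gamma_{K}\circ s$.

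To this end I would specialize diagram~(\ref{LKwh}) to the pairs $(X_{i},A_{i})=(S^{2},\ast)$; its left-hand square then reads $\gamma_{K}\circ h'\simeq g\circ\gamma_{L}$, where now
\(h'\colon\namedright{(C\Omega S^{2},\Omega S^{2})^{L}}{}{(C\Omega S^{2},\Omega S^{2})^{K}}\)
and
\(\gamma_{L}\colon\namedright{(C\Omega S^{2},\Omega S^{2})^{L}}{}{(S^{2},\ast)^{L}}\).
Since $K$ is flag and $K^{1}$ is a chordal graph, the discussion following~(\ref{LKwh}) (which combines Theorem~\ref{main}, Proposition~\ref{delooping} and Theorem~\ref{theo:chordec}, the last of which applies because $\widetilde{H}^{*}(\Omega S^{2})\neq 0$) shows that $h'$ has a right homotopy inverse
\(r\colon\namedright{(C\Omega S^{2},\Omega S^{2})^{K}}{}{(C\Omega S^{2},\Omega S^{2})^{L}}\).
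Therefore $\gamma_{K}\simeq\gamma_{K}\circ h'\circ r\simeq g\circ\gamma_{L}\circ r$, and putting $\lambda=\gamma_{L}\circ r\circ s$ gives $g\circ\lambda\simeq\gamma_{K}\circ s$, whence $\psi\circ\lambda=F\circ g\circ\lambda\simeq F\circ\gamma_{K}\circ s\simeq\psi_{K}$, which is exactly the asserted homotopy commutativity.

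The only step carrying genuine content is the existence of the section $r$ of $h'$, which is where the chordality of $K^{1}$ enters; everything else is formal chasing of naturality squares drawn earlier in the paper. The one point I would be careful to spell out is that the cone-pair construction $(C\Omega S^{2},\Omega S^{2})^{(-)}$ appearing in~(\ref{FGdgrm}) is \emph{literally} the same as the one appearing in the $X=S^{2}$ case of~(\ref{LKwh}), so that the sections $s$ and $r$ together with the fibration connecting map $\gamma_{L}$ may be composed as above; this identification is the only obstacle, and a rather mild one.
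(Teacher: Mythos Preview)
Your proof is correct and follows essentially the same approach as the paper's: stack the naturality square~(\ref{LKwh}) (specialised to $X_{i}=S^{2}$) on top of~(\ref{FGdgrm}), use the right homotopy inverse of $G$ and the right homotopy inverse of $h'$ (which the paper obtains by citing Corollary~\ref{chordal1} directly rather than the discussion after~(\ref{LKwh})), and then chase. The paper compresses the chase into the single sentence ``$G\circ h$ has a right homotopy inverse and the lemma follows,'' whereas you spell out the composite $\lambda=\gamma_{L}\circ r\circ s$ explicitly, but the content is identical.
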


\begin{proof}
As usual, let $L$ be the vertex set of $K$. Consider the diagram
\[\diagram
        (C\Omega S^{2},\Omega S^{2})^{L}\rto^-{\gamma_{L}}\dto^{h}
            & (S^{2},\ast)^{L}\dto \\
        (C\Omega S^{2},\Omega S^{2})^{K}\rto^-{\gamma_{K}}\dto^{G}
            & (S^{2},\ast)^{K}\dto^{F} \\
        \zk\rto^-{\psi_{K}} & DJ(K).
  \enddiagram\]
The upper square is induced by the simplicial inclusion of $L$ into $K$.
The lower square homotopy commutes by~(\ref{FGdgrm}). Notice that the
right column is equal to $\psi$.  As mentioned in the previous section, the
map $G$ has a right homotopy inverse. Since
$K$ is a flag complex and $K^{1}$ is a chordal graph, by Corollary~\ref{chordal1},
the map $h$ has a right homotopy inverse. Thus $G\circ h$ has a right
homotopy inverse and the lemma follows.
\end{proof}

\bibliographystyle{amsalpha}

\end{document}